\newcolumntype{R}[1]{>{\raggedleft\arraybackslash }b{#1}}
\newcolumntype{L}[1]{>{\raggedright\arraybackslash }b{#1}}
\newcolumntype{C}[1]{>{\centering\arraybackslash }b{#1}}
\newcommand\justify{%
  \let\\\@centercr
  \rightskip\z@skip
  \leftskip\z@skip}
\newtheorem{thm}{Theorem}[section]
\newtheorem{coro}[thm]{Corollary}
\newtheorem{prop}[thm]{Proposition}
\newtheorem{lem}[thm]{Lemma}
\newtheorem{ass}{Assumption}[section]
\theoremstyle{definition}
\newtheorem{defi}{Definition}[section]
\newtheorem{exe}[thm]{Example}
\newtheorem{rmq}{Remark}[section]
\newenvironment{preuve}{\begin{proof} \rm}{\end{proof}}
\newtheorem*{prop*}{Proposition}
\newtheorem*{rmq*}{Remark}
\numberwithin{equation}{section}
\newcommand{\p}{\mathbb{P}}
\newcommand{\R}{\mathbb{R}}
\newcommand{\e}{\mathbb{E}}
\newcommand{\n}{\mathbb{N}}
\newcommand{\ind}{\mathds{1}}
\newcommand{\f}{\mathbb{F}}
\newcommand{\T}{\mathbb{T}}
\newcommand{\Rr}{\mathcal{R}}
\newcommand{\Hr}{\mathcal{H}}
\newcommand{\Y}{\mathcal{Y}}
\newcommand{\Sr}{\mathcal{S}}
\newcommand{\Z}{\mathcal{Z}}
\newcommand{\F}{\mathcal{F}}
\newcommand{\ABS}[1]{{\left| #1 \right|}} 
\newcommand{\PAR}[1]{{\left(#1\right)}} 
\newcommand{\SBRA}[1]{{\left[#1\right]}} 
\newcommand{\BRA}[1]{{\left\{#1\right\}}} 
\newcommand{\NRM}[1]{{\left\Vert #1\right\Vert}} 
\DeclareMathOperator{\Tr}{Tr}
\DeclareMathOperator{\argmin}{argmin}
\DeclareMathOperator{\dist}{dist}
\DeclareMathOperator{\Err}{Err}
\DeclareMathOperator{\loc}{loc}
\title{Deep learning scheme for forward utilities using ergodic BSDEs\thanks{Acknowledgements:  The authors research is part of the ANR project DREAMeS (ANR-21-CE46-0002) and benefited from the support of respectively the "Chair Risques Emergents en Assurance"  and  "Chair Impact de la Transition Climatique en Assurance"  under the aegis of Fondation du Risque, a joint initiative by  Risk and Insurance Institute of  Le Mans,  and MMA-Cov\'ea and Groupama respectively. }}  
\author{Guillaume Broux-Quemerais\textsuperscript{1} \and Sarah Kaaka\"i\textsuperscript{1,2} \and Anis Matoussi\textsuperscript{1} \and Wissal Sabbagh\textsuperscript{1}}
\begin{document}

\maketitle
\rightskip=2cm
\leftskip=2cm

\footnotetext[1]{\small Laboratoire Manceau de Math\'ematiques \& FR CNRS N\textsuperscript{o} 2962, Institut du Risque et de l'Assurance, Le Mans University}
\footnotetext[2]{Centre de Mathématiques Appliquées (CMAP), CNRS, École polytechnique, Institut Polytechnique de Paris, 91120 Palaiseau,
France. The author is funded by the European Union (ERC, SINGER, 101054787). Views and opinions
expressed are however those of the author only and do not necessarily
reflect those of the European Union or the European Research Council. Neither
the European Union nor the granting authority can be held responsible
for them.
}

\begin{small}
\begin{center}
\section*{Abstract}
\end{center}
In this paper, we present a probabilistic numerical method for a class of forward utilities in a stochastic factor model. For this purpose, we use the representation of forward utilities using the ergodic Backward Stochastic Differential Equations (eBSDEs) introduced by Liang and Zariphopoulou in \cite{liang2017representation}. We establish a connection between the solution of the ergodic BSDE and the solution of an associated BSDE with random terminal time $\tau$, defined as the hitting time of the positive recurrent stochastic factor. The viewpoint based on BSDEs with random horizon yields a new characterization of the ergodic cost $\lambda$ which is a part of the solution of the eBSDEs. In particular, for a certain class of eBSDEs with quadratic generator, the Cole-Hopf transformation leads to a semi-explicit representation of the solution as well as a new expression of the ergodic cost $\lambda$. The latter can be estimated with Monte Carlo methods. We also propose two new deep learning numerical schemes for eBSDEs. Finally, we present numerical results for different examples of eBSDEs and  forward utilities together with the associated investment strategies. 
\end{small}

\rightskip=0cm
\leftskip=0cm

\vspace{1cm}

\newpage
\section*{Introduction}

\noindent
We are interested in the numerical approximation of certain classes of forward performance/utility processes, and their associated optimal decision criterion. Introduced by \cite{musiela2006investments}, forward utilities offer an interesting alternative to the classical setting of expected utility maximization at a terminal time. This forward-looking approach enables the dynamic adjustment of the decision criteria, starting from preferences which are known at an initial time, rather than imposing a potentially distant and arbitrary time horizon. The preferences of an agent are, thus, described by a (random) dynamic utility ($U(t,\cdot))$. The decision criterion maintains time consistency within the given investment or decision-making context, in the sense that if $X_t^\pi$ is the process (typically the wealth) resulting from the admissible decision/strategy $\pi$, then the preference process $U(t, X_t^\pi)$ is a supermartingale, and there exists an optimal strategy such that the preference process is a martingale.

Since their introduction, there has been 
tremendous theoretical developments in the field of forward utilities. In a general setting, \cite{nicole2013exact} established a sufficient condition for time-consistency when the dynamic utility is an It\^o random field. The forward utility verifies a non linear SPDE of HJB type. This work has been extended to forward utility of investment and consumption in \cite{el2018consistent}, and has been applied, for instance, to derive forward utilities in stochastic factor market models (see e.g. \cite{nadtochiy2014class}, \cite{avanesyan2020construction}).
 Forward utilities have found diverse applications over recent years, including but not limited to option valuation, insurance, mean field games (\cite{dos2021forward}), long term interest rate modeling (\cite{el2022ramsey}), risk measures (\cite{chong2019pricing}) or more recently pension design (\cite{hillairet2022time}, \cite{ng2024optimal}). Surprisingly, the subject of numerical methods for forward utilities remains largely unexplored, despite its critical importance for practical applications. In \cite{gobet2018convergence}, a general approach is proposed using strong approximations of compounds of random maps. In this paper, we take a different approach to introduce new numerical schemes for the class of so-called homothetic forward utilities, taking advantage of the representation of these processes using ergodic BSDEs, introduced in \cite{liang2017representation}.

We investigate the representation of an agent's forward preferences investing in an incomplete financial market, where stock price dynamics are driven by a stochastic factor $(V_{t})_{t \geq 0}$. Homothetic forward utilities are expressed as separable functionals, denoted by $U(t,x) = u(x)e^{f(t,V_t)}$, where $u$ is a standard exponential or power utility function (the expression is additive in the logarithmic case). 
The main result of \cite{liang2017representation} provides a representation of the function $f$ with mean of the unique Markovian solution of a related ergodic BSDE.

Ergodic BSDEs have first been introduced in \cite{fuhrman2009ergodic}, with the aim to study an optimal ergodic control problem, expressed as the minimization of an averaged cost function over an infinite time horizon. 
Formally, the solution of an ergodic BSDE, which is an infinite horizon BSDE, is a triplet $(Y, Z, \lambda)$, where $Y$ and $Z$ are adapted processes and $\lambda$ is a real number, which solves
\begin{eqnarray}
\label{eBDSEintro}
Y_{t} = Y_{T} + \int_{t}^{T} (F(V_{s}, Z_{s}) - \lambda)ds - \int_{t}^{T} Z_{t}^{\top}dW_{t}, \quad \forall \, 0 \leq t \leq T < +\infty.
\end{eqnarray}
In \cite{fuhrman2009ergodic}, this equation is studied under Lipschitz assumptions on the driver, for a stochastic factor $V$ with constant volatility and drift that satisfy a dissipative condition. The dissipativity assumption has been relaxed in \cite{debussche2011ergodic} for stochastic factors with constant volatility in general Hilbert spaces and for non constant and possibly unbounded volatility in \cite{hu2019ergodic}. In the constant volatility framework, the component $Z$ of the ergodic BSDE solution is bounded, allowing \cite{liang2017representation} to obtain existence and uniqueness results of Markovian solutions $(y(V_t), z(V_t), \lambda)_{t\geq 0}$, for drivers $F$ which are only locally Lipschitz in $z$.\\

In this paper, we develop numerical schemes for approximation of Markovian solutions of the general class of ergodic BSDEs introduced in \cite{liang2017representation}, which includes the ergodic BSDEs used in the representation of homothetic utilities. There are two main challenges in the simulation of such equations:
\begin{enumerate}
    \item There is an additional real unknown $\lambda$. The usual backward discretization equation for $Y$ thus depends on $\lambda$.
    \item This is an infinite horizon BSDE, which has to hold for all $T > 0$, and for all $0 \leq t \leq T$. Thus, there is no terminal condition as in the simulation of finite horizon BSDE.
\end{enumerate}
The unkwown ergodic cost $\lambda$ can be interpreted in several ways. First, it is the long term growth rate of an associated risk sensitive control problem, as mentioned in \cite{liang2017representation}. It can also be represented as the linear growth rate of the initial value of the solution of an analogous finite horizon BSDE with respect to the terminal time $T$ when the latter goes to infinity, see \cite{hu2019ergodic}. However, numerical schemes for the simulation of BSDEs become unstable for large horizons, and as a result this representation cannot be used for the numerical approximation of the ergodic cost.

Over the past few years, machine learning algorithms have been extensively studied since they can be used to solve high dimensional non-linear PDEs, based on the BSDE representation of their solution (see, for example \cite{han2017deep}, \cite{chan2019machine}, \cite{hure2020deep}, \cite{germain2021neural}, \cite{kapllani2020deep}). Two main types of neural network algorithm have been developed. The first relies on a global loss function for solving BSDEs and was initially proposed in \cite{han2017deep}. The Deep BSDE solver consists in the training of as many neural networks as time steps to approximate the component $Z$ of the solution. The process $Y$ is computed with a forward discretization starting from $Y_{0}$. Then, $Y_{0}$ and the neural networks parameters are optimized according to a loss function on the terminal value of the discretized scheme. A convergence study of the Deep BSDE is developed in \cite{han2020convergence} and \cite{chan2019machine} shows that sharing one neural network across all time steps is more efficient. The second class of algorithms relies on a local approach and consists of solving local optimization problems at each time step. Firstly introduced in \cite{germain2021neural}, \cite{kapllani2020deep}, these methods use two neural networks to approximate both processes $Y$ and $Z$. Local loss functions are constructed based on the iteration of time discretization of BSDEs with the terminal condition.\\

Herein, we take advantage of the recurrence property of the stochastic factor $V$ in order to provide a horizon as well as a terminal condition to the problem of simulating the solution of an ergodic BSDE. In the case of ergodic BSDEs derived from forward utilities, an initial condition $Y_0$ is naturally given since the initial agent's utility $u_0$ is known. 
This allows to introduce a random horizon $\tau$ to the ergodic BSDE, defined as a return time of the one dimensional diffusion $V$ to $v_{0}$. Under the dissipativity assumption (Assumption \ref{weakdissass}), this stopping time is almost surely finite and $ Y_\tau = Y_0 $. 
Then the solution of the ergodic BSDE \eqref{eBDSEintro} is also a solution of an \lq ergodic\rq\,BSDE with random terminal time $\tau$ and fixed initial condition. Under additional integrability assumptions on $\tau$, we prove that it is the unique solution of the \lq ergodic\rq\,BSDE with random terminal time, using uniqueness result from \cite{pardoux1998backward} and the fact that the unknown $\lambda$ is uniquely determined by the fixed initial condition.

This new representation of ergodic BSDEs is particularly useful in order to design numerical schemes. When the driver $F$ is linear in $Z$, the representation result for linear BSDE leads to a semi-explicit representation of the solution as well as a new characterization of the ergodic cost $\lambda$, depending on the random horizon $\tau$. This can be applied to obtain an expression for the ergodic cost which is actually linked with exponential forward utilities when there is no constraint on the portfolio processes. Applying the Cole-Hopf transformation, this last result can be extended for purely quadratic drivers, as it is the case for power utility and with no constraints on the portfolio. For these examples, regression and Monte Carlo methods can be used to simulate the solution of the associated ergodic BSDEs.

For the approximation scheme, we introduce the Euler discretization of the stochastic factor $V$ and the associated estimation of the horizon time $\tau$. Results from \cite{geiss2017} related to the Euler estimation of the exit time of a diffusion from a smooth domain apply in our setting and provide a bound on the error in $L^{1}$ of the approximation error on $\tau$. We then present a backward scheme for ergodic BSDE using the correspondence with BSDE with random terminal time (see, for example, \cite{bouchard2009discrete}, \cite{bouchard2009strong}) and establish a bound for the related discrete-time approximation error in terms of the quantities $\Rr(Z)_{\Hr^{2}}^{\pi}$, $\ABS{\lambda - \bar{\lambda}}$ and $\e \SBRA{\ABS{\tau - \bar{\tau}}}$. Compared to \cite{bouchard2009strong}, the error bound does not depend on $\Rr(Y)_{\Sr^{2}}^{\pi}$ since the generator of ergodic BSDEs we consider is independent of $y$. However, we get an additional term depending on the estimation of the ergodic cost $\lambda$.

We also present two deep learning based methods for the simulation of ergodic BSDEs, which allow to tackle simultaneously the approximation of the ergodic cost $\lambda$ and the unknown processes $Y$ and $Z$. In the context of ergodic BSDEs, the initial value $Y_{0}$ is known already. We use a forward discretization starting from $Y_{0} = y_{0}$, and approximate $\lambda$ as a trainable parameter of the model. We first present a global solver denoted by GeBSDE, approximating $Z$ with one neural network common across all time steps. The optimization is performed according to a loss function at the random horizon $\tau$, the output aiming to match the terminal value $Y_{\tau} = Y_{0}$. We, then, present a second algorithm denoted by LAeBSDE, based on a local approach, approximating $Y$ and $Z$ with two distinct neural networks. The optimization is, then, performed according to the aggregation of local loss functions at each time step. We provide some numerical tests to evaluate the performance of both algorithms. We investigate two examples with explicit solutions, taken from \cite{hu2019ergodic}, and two examples adapted from \cite{liang2017representation}, with a driver representing power forward utilities. In the latter case, our algorithm also allows the simulation of the optimal strategy.

\vspace{0.3cm}
\noindent
The paper is organized as follows. In Section \ref{section:forwardutilities}, we briefly recall the stochastic factor model of \cite{liang2017representation}, as well as the class of homothetic forward utilities and their connection with ergodic BSDEs. In Section \ref{section:connection}, we establish some recurrence properties of the stochastic factor $V$, and then introduce the ergodic BSDE with random terminal time $\tau$, with fixed initial and terminal condition. We show that the solutions of such equation coincides with the Markovian solution of the ergodic BSDE. Using this representation, we first study in Section \ref{section:numerical} a backward discretization and the associated error estimate. Finally we present the deep learning algorithms for ergodic BSDE and the numerical results in Section \ref{section:simulation}.

\newpage
\noindent\textbf{Notations:}\\
All stochastic processes in the sequel are defined on a standard probability space $\PAR{\Omega, \f, \F, \p}$, where the filtration $\f = (\F_{t})_{t \geq 0}$ is the natural filtration generated by a $d$-dimensional Brownian motion $W$, and is assumed right continuous and complete. For $x \in \R^{d}$, we denote by $x^{\top}$ the transpose of vector $x$, $\NRM{.}$ the usual norm $\NRM{x} = \Tr(x x^{\top})^{\frac{1}{2}}$ and $\dist(x, \Pi)$ the distance function of $x$ to a closed convex subset $\Pi \subset \R^{d}$. We denote by $L^{2}$ the space of square integrable random variables and, also, introduce the usual space of solutions for $\gamma \in \R$ and $\tau$ being a $\f$ stopping time:
\begin{eqnarray*}
\Sr^{2}(\gamma, \tau) &=& \BRA{(\varphi_{t})_{t \geq 0}, \, \text{real valued progressively measurable process} \,\, \text{s.t.} \, \e \SBRA{\underset{0 \leq s \leq \tau}{\sup} e^{\gamma s} \ABS{\varphi_{s}}^{2}} < \infty.} \\
\end{eqnarray*}

\section{Forward utilities and ergodic BSDEs} \label{section:forwardutilities}

The aim herein is to investigate the numerical approximation of different classes of homothetic forward utilities, as introduced in \cite{liang2017representation}, which model an agent's dynamic preferences as she invests in a stochastic factor financial market. \\
We start by introducing the setting and related results of \cite{liang2017representation}. In particular, we are interested in the representation of homothetic forward utilities involving the unique Markovian solution of a certain class of ergodic BSDEs. This representation motivates our study of numerical approximations for ergodic BSDEs.

\subsection{Forward utilities and link with ergodic BSDEs}

Dynamic utilities generalize the notion of the classical utility function. Formally, a dynamic utility $U = (t,x,\omega) \in \R^+ \times \R^+ \times \Omega \to \R$ is a collection of random utility functions such that:
\begin{itemize}[-]
\item For all $t \geq 0$, and for all $x \in \R^+$, $U (t, x)$ is $\mathcal F_t$-measurable.
\item The functions $x \in \R^+ \mapsto U(t,x,\omega)$ are nonnegative, strictly concave increasing functions of class $\mathcal C^{2}$ on $]0, \infty[$, $(\omega,t)$ a.s.
\item $u_0:= U(0, \cdot)$ is a standard (deterministic) utility function.
\end{itemize}

Homothetic utilities, introduced in \cite{liang2017representation}, are functions of the agent's wealth $x$ and are characterized by one of the following forward utilities:
\begin{flalign}
&\textit{- Logarithmic case}: \quad U(t, x) = ln(x) + f(V_{t}, t), \label{logut} \\
&\textit{- Exponential case}: \quad U(t, x) = - e^{- \gamma x + f(V_{t}, t)}, \, \gamma \in (0, 1), \label{exput} \\
&\textit{- Power case}: \quad U(t, x) = \frac{x^{\delta}}{\delta} e^{f(V_{t}, t)}, \, \delta\in(0,1), \, \label{powut} 
\end{flalign}
where $f$ is a deterministic function to be specified hereafter, and $V$ is a $d'$-dimensional diffusion process with local characteristics $\mu: \R^{d'} \to \R^{d'}$ and constant volatility matrix $\kappa$, solving
\begin{eqnarray} \label{stochfact}
dV_{t}^{i} = \mu^{i}(V_{t})dt + \sum_{j=1}^{d} \kappa^{ij}dW_{t}^{j}, \quad V_{0}^{i} \in \R.
\end{eqnarray}

\noindent
The agent invests in an incomplete market consisting of one riskless bond and $n$ stocks. Assuming the bond to be the numeraire, the stock price dynamics, discounted by the interest rate, are given for $i=1, ..., n,$ by
\begin{eqnarray}
dS_{t}^{i} = S_{t}^{i} \PAR{b^{i}(V_{t})dt + \sum_{j=1}^{d}\sigma^{ij}(V_{t})dW_{t}^{j}},
\end{eqnarray}
where $V$ is a $d'$-dimensional stochastic factor given by \eqref{stochfact}, and satisfy the following assumption.
\begin{ass} \label{asslispch}
\begin{enumerate}
            \item The functions $b=(b^i)_{1\leq i \leq n}$ and $\sigma=(\sigma^{ij})_{1\leq i \leq n\atop 1 \leq j \leq d}$ are uniformly bounded and for all $v \in \R ^{d'}$, the matrix $\sigma(v)$ has full row rank $n$.
            \item The function $\theta= \sigma^{\top} (\sigma \sigma^{\top})^{-1} b$ is a uniformly bounded and Lipschitz continuous function. 
            The risk premium vector is noted by $\theta(V_t)$ taking values in $\R^n$.                    
\end{enumerate}
\end{ass}

The agent invests a proportion $\bar{\pi} = \PAR{\bar{\pi}^{1}, ..., \bar{\pi}^{n}}^{\top}$ of her wealth $X^{\pi}$ in the $n$ risky assets. For an initial value $X_{0}^{\pi} = x_{0} \in \R^+$, assuming the self-financing condition holds and rescaling the strategy vector by the volatility, the wealth process $X$ evolves as 
\begin{eqnarray} \label{wealth}
dX_{t}^{\pi} = X_{t}^{\pi} \pi_{t} \cdot \PAR{\theta(V_{t})dt + dW_{t}}, \quad \pi_t = \sigma(V_t)^{\top} \bar \pi_t \in \R^d . 
\end{eqnarray}
For each $t\geq 0$, the strategy $(\pi_t)_{t \geq 0}$ is assumed to be in a closed and convex set $\Pi  \subset \R^{d}$. Admissible strategies are, also, required to be BMO. We refer to \cite{liang2017representation} for further details. 

\begin{rmq}
For exponential performance process, it is more convenient to use the discounted amount of wealth invested in the stock $\alpha_{t}  = X_t^\pi  \pi_t $ as control variable, leading to the following wealth process dynamics
\begin{eqnarray}
dX_{t}^{\alpha} = \alpha_{t}^{\top} \PAR{\theta(V_{t})dt + dW_{t}}.
\end{eqnarray}
\end{rmq}

\paragraph*{}  A dynamic utility is called a \textit{forward utility} if the consistency property holds: the utility is a supermartingale along the wealth process for any admissible control $\pi$ and a martingale along the optimal wealth process. The optimal strategy thus gives maximal satisfaction to the agent, which is preserved at all times in the future. This additional time consistency property makes the notion of forward utilities coherent with the dynamic programming principle.

\begin{defi}[\textit{Forward utility}] \label{progutdef}
 A forward utility is a dynamic utility $U$ satisfying the time consistency property:
\begin{itemize}[-]
    \item  For any admissible strategy $\pi$, $U(t, X_{t}^{\pi})$, $t\geq 0$, is a supermartingale. 
    \item There exists an admissible strategy $\pi^{*}$ such that $U(t, X_{t}^{\pi^{*}})$, $t\geq 0$, is a martingale.
\end{itemize}
\end{defi}

When $U$ is an It\^o-random field with sufficient regularity conditions on its local characteristics, a sufficient consistency condition of HJB type, characterizing the drift of forward utilities, as well as the optimal strategy under this condition are obtained in \cite{MZ2010}, \cite{nicole2013exact} and \cite{SSZ2016}. In particular, $U$ is solution of a non-linear HJB-SPDE under this sufficient assumption for consistency. In the case of the homothetic forward utilities \eqref{logut}-\eqref{powut} with stochastic factor, the HJB-SPDE on $U$ is equivalent to a PDE for the deterministic function $f$ (see \cite{liang2017representation}). As pointed out by the authors, the problem is ill posed. However, a characterisation of $f$ via the Markovian solution of a related ergodic BSDE is given in \cite{liang2017representation}, which allows us to develop numerical schemes for homothetic forward utilities.

\paragraph{Homothetic forward utility and ergodic BSDE} Formally, an ergodic BSDE with generator $F$ is a backward stochastic differential equation with infinite horizon, whose solution is a triplet $(Y, Z, \lambda)$ where $Y, Z$ are adapted processes and $\lambda \in \R$, and satisfies for any $T>0$, $\p$-a.s for any $0 \leq t \leq T$
\begin{eqnarray} \label{ebsde}
Y_{t} &=& Y_{T} + \int_{t}^{T} F(V_{s}, Z_{s})ds - \lambda (T-t) - \int_{t}^{T} Z_{s}^{\top} dW_{s} \label{introebsde} \\
dV_{t}^{i} &=& \mu^{i}(V_{t})dt + \sum_{j=1}^{d} \kappa^{ij}dW_{t}^{j}, \quad V_{0}^{i} \in \R,\quad 1\leq i\leq d^{'}.  
\end{eqnarray}
This class of ergodic BDSE was first introduced in \cite{fuhrman2009ergodic}. Existence and uniqueness results in our framework are recalled in Section \ref{section:eBSDE} below. Note that the infinite horizon is coherent with the willingness to adapt dynamically the utility for all upcoming times.

\paragraph{} Next, we introduce the generators associated with the various homothetic forward utilities defined in \eqref{logut}, \eqref{exput}, \eqref{powut}.
\begin{itemize}[-]
\item \textit{Logarithmic case}: for $v \in \R^{d'}$
\begin{eqnarray} \label{driver_log}
F_{\log}(v) = - \frac{1}{2} \dist^{2} \PAR{\Pi, \theta(v)} + \frac{1}{2} \NRM{\theta(v)}^{2}.
\end{eqnarray}
\item \textit{Exponential case}: for $(v, z) \in \R^{d'} \times \R^{d}$
\begin{eqnarray} \label{driver_exp}
F_{\exp}(v, z) = \frac{1}{2} \gamma^{2} \dist^{2} \PAR{\Pi, \frac{z + \theta(v)}{\gamma}} - \frac{1}{2} \NRM{z + \theta(v)}^{2} + \frac{1}{2} \NRM{z}^{2}.
\end{eqnarray}
\item \textit{Power case}: for $(v, z) \in \R^{d'} \times \R^{d}$
\begin{eqnarray} \label{driver_zar}
   F^{\delta}(v, z) = \frac{\delta(\delta-1)}{2} \dist^{2}\PAR{\Pi, \frac{\theta(v) + z}{1- \delta}} + \frac{\delta}{2(1 - \delta)} \NRM{\theta(v) + z}^{2} + \frac{1}{2}\NRM{z}^{2},
\end{eqnarray}
\end{itemize}

If equation \eqref{ebsde} with one of the above generators admits a Markovian solution \newline $(y(V_{t}), z(V_{t}), \lambda)_{t\geq 0}$, one can show that the homothetic forward utilities $U$ defined by \eqref{logut}-\eqref{powut}, with 
\begin{equation*}
    f(v, t) = y(v) - \lambda t, 
\end{equation*} 
are time forward homothetic utilities, as defined in Definition \ref{progutdef}. Next, we summarize these results, while existence and uniqueness results of Markovian solutions to \eqref{ebsde} are recalled the next section.
\begin{prop}[Theorem $3.2$ and $4.2$, \cite{liang2017representation}]
Let $(y(V_{t}), z(V_{t}), \lambda)_{t \geq 0}$ be a Markovian solution of the ergodic BSDE \eqref{ebsde} with driver $F$ given by \eqref{driver_log} (resp. \eqref{driver_exp}, \eqref{driver_zar}). \\
Then, the associated logarithmic (resp. exponential, power) utility $U$ \eqref{logut} (resp. \eqref{exput}, \eqref{powut}) with $f(., t) = y(.) - \lambda t$ is a forward utility.
Furthermore, the optimal strategy is given by:
\begin{align}
     \label{pi_log}
 \text{Logarithmic case} & \quad   \pi_t^* = \text{Proj}_{\Pi}\big(\theta(V_t)\big). &\\
\label{pi_exp}
 \text{Exponential  case} &  \quad \alpha_t^* = \text{Proj}_{\Pi}\big(\frac{z(V_t)+ \theta(V_t)}{\gamma}\big).& \\
\label{pi_pow}
 \text{Power case}  & \quad  \pi_t^* = \text{Proj}_{\Pi}\big(\frac{z(V_t)+ \theta(V_t)}{1-\delta}\big). & 
 \end{align}
\end{prop}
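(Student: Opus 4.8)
The plan is to verify directly the two bullet points of Definition \ref{progutdef}: that $U\PAR{t,X_t^\pi}$ is a supermartingale for every admissible strategy and a martingale along one particular strategy, which will simultaneously identify the latter as the optimum. The structural properties of $U$ as a dynamic utility (positivity, concavity, regularity, and the deterministic initial normalisation, recalling that $V_0$ is deterministic) are inherited from the standard utility $u$ and the factor $e^{f}>0$, so the whole content is the consistency property. I would avoid applying It\^o's formula to $y$ (which would force a $C^2$ assumption on the Markovian map); instead I would use the ergodic BSDE \eqref{ebsde} in differential form for $Y_t=y(V_t)$, $Z_t=z(V_t)$, namely
\begin{equation*}
d\PAR{y(V_t) - \lambda t} = - F\PAR{V_t, z(V_t)}\,dt + z(V_t)^\top dW_t ,
\end{equation*}
which carries all the information I need about $f(V_t,t)=y(V_t)-\lambda t$ with no smoothness hypothesis on $y$.

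Then, for each utility, I combine this with the wealth dynamics \eqref{wealth} (resp. the amount dynamics in the exponential case) through It\^o's product/chain rule. Treating the power case as representative, write $G_t=\delta\log X_t^\pi + y(V_t)-\lambda t$ so that $U\PAR{t,X_t^\pi}=\tfrac1\delta e^{G_t}$; a direct computation gives the drift of $U$ as $\tfrac1\delta e^{G_t}$ times
\begin{equation*}
A_t = \delta\,\pi_t\cdot\PAR{\theta(V_t)+z(V_t)} + \frac{\delta(\delta-1)}{2}\NRM{\pi_t}^2 + \frac12\NRM{z(V_t)}^2 - F^\delta\PAR{V_t,z(V_t)} .
\end{equation*}
Substituting the generator \eqref{driver_zar}, the $\tfrac12\NRM{z(V_t)}^2$ terms cancel, and completing the square in $\pi_t$ leaves
\begin{equation*}
A_t = \frac{\delta(1-\delta)}{2}\SBRA{\dist^2\PAR{\Pi,\tfrac{\theta(V_t)+z(V_t)}{1-\delta}} - \NRM{\pi_t-\tfrac{\theta(V_t)+z(V_t)}{1-\delta}}^2 } .
\end{equation*}
Since $\delta\in(0,1)$ and $\pi_t\in\Pi$, the bracket is non-positive and vanishes exactly when $\pi_t=\Proj_{\Pi}\PAR{\tfrac{\theta(V_t)+z(V_t)}{1-\delta}}$, which is \eqref{pi_pow}. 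The logarithmic and exponential cases follow the same scheme: the chosen generators \eqref{driver_log}, \eqref{driver_exp} are precisely those making the $\NRM{z}^2$ contributions cancel, and completing the square in $\pi$ (resp. in $\alpha$) yields the same structure $\tfrac{c}{2}\SBRA{\dist^2(\Pi,\cdot)-\NRM{\cdot-\cdot}^2}$ with $c>0$. In the exponential case one must note that $U=-e^{H}<0$, so the drift of $U$ equals $-e^{H}$ times a coefficient of the form $\tfrac{\gamma^2}{2}\SBRA{\NRM{\alpha-\cdot}^2-\dist^2(\Pi,\cdot)}\ge 0$; the sign inside is reversed but the drift of $U$ remains non-positive, and the minimiser gives \eqref{pi_log} and \eqref{pi_exp}.

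The sign of the drift already shows that $U\PAR{t,X_t^\pi}$ is a local supermartingale for any admissible $\pi$, and a local martingale along the announced $\pi^*$ (resp. $\alpha^*$). The delicate step — which I expect to be the real obstacle, the algebra above being routine — is to upgrade these statements to \emph{true} (super)martingales, i.e. to show that the stochastic integral $\int \PAR{\cdots}^\top dW$ is a genuine martingale and that $U\PAR{t,X_t^{\pi^*}}$ is uniformly integrable on finite horizons. Here I would invoke the admissibility/BMO requirement on $\pi$ together with the boundedness of $z(V_\cdot)$ in the constant-volatility setting and of $\theta$ under Assumption \ref{asslispch}: these provide the exponential integrability needed to control $e^{G_t}$ (resp. $e^{H_t}$) and the integrand, through the energy inequalities for BMO martingales. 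Finally one must check that the candidate $\pi^*$ is itself admissible: it takes values in $\Pi$ by construction (projection onto the closed convex set $\Pi$), and its BMO property follows from the boundedness of $\theta$ and $z$ and the $1$-Lipschitz continuity of $\dist(\cdot,\Pi)$.
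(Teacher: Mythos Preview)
The paper does not supply a proof of this proposition: it is stated as a citation of Theorems~3.2 and~4.2 of \cite{liang2017representation}, and no argument is given beyond that reference. Your sketch is therefore not competing with anything in the present paper. That said, what you outline is precisely the standard verification argument used in \cite{liang2017representation}: write $U(t,X_t^\pi)$ as $\tfrac1\delta e^{G_t}$ (or the analogous expression in the other two cases), compute the drift using the differential form of the eBSDE rather than differentiating $y$, complete the square in the control variable so that the drift becomes a non-positive multiple of $\dist^2(\Pi,\cdot)-\NRM{\pi_t-\cdot}^2$, and then invoke the BMO admissibility together with boundedness of $z(\cdot)$ and $\theta(\cdot)$ to pass from local to true (super)martingales and to check that $\pi^*$ is admissible. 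Your algebra in the power case is correct, and your identification of the integrability step as the only non-routine point is accurate.
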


Motivated by this representation, the aim of this paper is to propose numerical methods for the simulation of Markovian solutions of ergodic BSDEs that allow us to approximate utilities (\eqref{logut}-\eqref{powut}) and their optimal strategies. As a matter of fact, we study a larger class of ergodic BSDE, introduced below.

\subsection{Markovian solution of ergodic BSDEs} \label{section:eBSDE}

Ergodic BSDEs have first been studied in \cite{fuhrman2009ergodic} under a dissipativity assumption on the stochastic factor $V$ to solve an ergodic stochastic control problem. The assumptions on the stochastic factor have been relaxed in \cite{debussche2011ergodic} with a weak dissipative condition and in \cite{hu2019ergodic} for non constant and possibly unbounded volatility. Ergodic BSDEs are usually studied under Lipschitz condition on the generator $F$. When the stochastic factor's volatility is constant, the component $Z$ of the solution to the eBSDE is bounded, which allows the driver to only be locally Lipschitz in $z$. We will work within the framework of \cite{liang2017representation} with a stochastic factor satisfying a strong dissipativity assumption and constant volatility. This framework leads to the existence of a Markovian solution to the ergodic BSDE \eqref{ebsde} such that $Z$ is bounded, and thus allows the generator to have quadratic growth in $z$. 

\begin{ass} \label{weakdissass}
There exists a constant $C_{\mu} > 0$ such that for any $v, \bar{v} \in \, \R^{d}$
            \begin{eqnarray} \label{disscond}
            \PAR{\mu(v) - \mu(\bar{v})}^{\top}(v - \bar{v}) \leq - C_{\mu}\NRM{v - \bar{v}}^{2}.
            \end{eqnarray}
The volatility matrix $\kappa = \PAR{\kappa_{ij}}_{\underset{1 \leq j \leq d}{1 \leq i \leq d'}}$ is such that $\kappa \kappa^{\top}$ is positive definite.
\end{ass}


Under Assumption \ref{weakdissass}, from a direct application of Gronwall's lemma, the diffusion $V$ is exponentially ergodic. The authors in \cite{hu2019ergodic} generalized this result under a weak dissipative assumption. This property is essential for the correspondence with random time horizon BSDE and the algorithm we present in the sequel.

\begin{ass} \label{Fgrowthass}
The following properties hold:
\begin{itemize}
\item [i)]There exists a positive constant $K$ such that $\forall v \in \R^{d'}$, $\ABS{F(v, 0)} \leq K$.
\item [ii)]There exists positive constants $C_{v}$ and $C_{z}$ such that $\forall v, \bar{v} \, \in \R^{d'}$, $\forall z, \bar{z} \, \in \R^{d}$
\begin{eqnarray}
\ABS{F(v, z) - F(\bar{v}, z)} &\leq& C_{v} \PAR{1 + \NRM{z}} \NRM{v-\bar{v}}, \label{Ftroncv} \\
\ABS{F(v, z) - F(v, \bar{z})} &\leq& C_{z} \PAR{1 + \NRM{z} + \NRM{\bar{z}}} \NRM{z - \bar{z}}. \label{Ftroncz}
\end{eqnarray}
Moreover, we require that $C_{v} < C_{\mu}$ with $C_{\mu}$ in \eqref{disscond}.
\end{itemize}
\end{ass}

Note that under Assumption \ref{asslispch}, the function $\theta$ is bounded and Lipschitz, which yields that every generator \eqref{driver_log}, \eqref{driver_exp} and \eqref{driver_zar} introduced in the previous section satisfies Assumption \ref{Fgrowthass}. We recall the existence result for eBSDE studied in \cite{liang2017representation}.

\begin{prop}[Existence - \cite{liang2017representation}] \label{existebsde}
Under Assumptions \ref{weakdissass} and \ref{Fgrowthass}, the ergodic BSDE \eqref{ebsde} admits a Markovian solution $(y(V_t), z(V_t), \lambda)_{t\geq 0}$ such that $y(.)$ is sub-linear and $z(.)$ is bounded by $Z_{\max} = \NRM{\kappa} \displaystyle\frac{C_{v}}{C_{\mu} - C_{v}}$.
\end{prop}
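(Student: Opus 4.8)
The plan is to follow the classical vanishing-discount method for ergodic BSDEs (Fuhrman--Hu--Tessitore, \cite{fuhrman2009ergodic}), adapted to the present constant-volatility, quadratic-in-$z$ setting of \cite{liang2017representation}. For each discount rate $\alpha>0$ I would first solve the infinite-horizon \emph{discounted} BSDE
\begin{equation*}
Y_t^\alpha = Y_T^\alpha + \int_t^T \big(F(V_s,Z_s^\alpha) - \alpha Y_s^\alpha\big)\,ds - \int_t^T (Z_s^\alpha)^\top dW_s .
\end{equation*}
Thanks to the strict monotonicity in $y$ supplied by the $-\alpha y$ term, together with the growth of Assumption \ref{Fgrowthass}, this equation has a unique Markovian solution $(v^\alpha(V_t),z^\alpha(V_t))_{t\ge 0}$ with $v^\alpha$ of sublinear growth; this is the standard building block, and by It\^o's formula $z^\alpha=\kappa^\top\nabla v^\alpha$.

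The core of the argument is a gradient estimate for $v^\alpha$ that is \emph{uniform in} $\alpha$. Here the constant volatility is decisive: for two starting points the difference $\delta V_t = V_t^v - V_t^{\bar v}$ has no martingale part, so Assumption \ref{weakdissass} yields the pathwise contraction $\NRM{\delta V_t}\le e^{-C_\mu t}\NRM{v-\bar v}$. Writing the BSDE for $\delta Y = Y^{\alpha,v}-Y^{\alpha,\bar v}$, splitting the driver increment into its $v$-part (controlled by \eqref{Ftroncv}) and its $z$-part (linearized via \eqref{Ftroncz} into a process $\beta$), and cancelling the $\beta$-term by a Girsanov change of measure $\q$, I would arrive at
\begin{equation*}
\ABS{v^\alpha(v)-v^\alpha(\bar v)} \le C_v\,\e^{\q}\SBRA{\int_0^\infty e^{-\alpha s}\big(1+\NRM{z^\alpha(V_s)}\big)\NRM{\delta V_s}\,ds}.
\end{equation*}
Inserting the contraction and using $z^\alpha=\kappa^\top\nabla v^\alpha$ turns this into a self-consistent inequality for the Lipschitz constant of $v^\alpha$, which closes \emph{precisely} because $C_v<C_\mu$ and delivers the $\alpha$-independent bound $\NRM{z^\alpha}_\infty\le Z_{\max}=\NRM{\kappa}\frac{C_v}{C_\mu-C_v}$. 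I would also record the companion estimate $\alpha\NRM{v^\alpha}_\infty\le K$, which follows from $\ABS{F(\cdot,0)}\le K$ (Assumption \ref{Fgrowthass}(i)) by comparison.

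With these uniform bounds the ergodic limit is routine. Normalizing $\bar v^\alpha := v^\alpha - v^\alpha(0)$, the family is equi-Lipschitz and locally bounded and $\alpha v^\alpha(0)$ is bounded; along a subsequence $\alpha_n\downarrow 0$, Arzel\`a--Ascoli gives $\bar v^{\alpha_n}\to y$ locally uniformly with $y$ Lipschitz (hence sublinear), $-\alpha_n v^{\alpha_n}(0)\to\lambda\in\R$, and $z^{\alpha_n}\to z$ with $\NRM{z}_\infty\le Z_{\max}$. Passing to the limit in the discounted BSDE, the uniform $z$-bound lets one control the nonlinearity through \eqref{Ftroncz} and conclude by dominated convergence that $(y(V_\cdot),z(V_\cdot),\lambda)$ solves \eqref{ebsde}.

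The main obstacle is the uniform gradient estimate. The subtlety is that the quadratic growth of $F$ in $z$ appears as the factor $(1+\NRM{z})$ in \eqref{Ftroncv}, so the bound one seeks on $z^\alpha$ already enters the right-hand side of the estimate: the inequality is genuinely self-referential and only closes under the strict gap $C_v<C_\mu$. Moreover, making the Girsanov/linearization step rigorous presupposes that $z^\alpha$ is bounded (so that $\beta$ is bounded and the measure change is admissible); this usually requires a preliminary, possibly cruder, boundedness argument --- for instance a Bernstein-type gradient estimate on the elliptic PDE $\mathcal L v^\alpha + F(\cdot,\kappa^\top\nabla v^\alpha)-\alpha v^\alpha=0$ --- before one can bootstrap to the sharp constant $Z_{\max}$.
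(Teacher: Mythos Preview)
The paper does not supply its own proof of this proposition: it is stated as a citation of \cite{liang2017representation} and no argument is given in the text. Your vanishing-discount sketch is exactly the method used in that reference (itself adapted from \cite{fuhrman2009ergodic}), including the key self-referential gradient estimate that closes under the gap condition $C_v<C_\mu$ and yields the sharp bound $Z_{\max}=\NRM{\kappa}\frac{C_v}{C_\mu-C_v}$; so your proposal is correct and faithful to the original source, there is simply nothing in the present paper to compare it against.
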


Assumptions \ref{weakdissass} and \ref{Fgrowthass} thus provide the existence of a Markovian solution $(y(V_t), z(V_t), \lambda)_{t\geq 0}$ to \eqref{ebsde} where $z(.)$ is bounded, which is particularly convenient to apply our work to ergodic BSDE with quadratic driver $F$ as for example to simulate exponential and power forward utilities. In fact, working with the truncated driver $F \circ \varphi_{Z_{max}}$ where $\varphi_{Z_{max}}$ is the projection on the centered ball of $\R^{d}$ of radius $Z_{max}$, the application $F \circ \varphi_{Z_{max}}$ is then Lispchitz in $v$ and $z$, namely:
\begin{eqnarray}
\ABS{F \circ \varphi_{Z_{\max}} (v, z) - F \circ \varphi_{Z_{\max}}(\bar{v}, z)} &\leq& C_{v} \PAR{1 + Z_{\max}} \NRM{v - \bar{v}}, \label{Flipschv} \\
\ABS{F \circ \varphi_{Z_{\max}} (v, z) - F \circ \varphi_{Z_{\max}} (v, \bar{z})} &\leq& C_{z} (1 + 2 Z_{\max}) \NRM{z - \bar{z}}. \label{Flipschz}
\end{eqnarray}
The uniqueness of the Markovian solution to \eqref{ebsde} is usually stated up to a constant, by fixing one point of the solution, typically $y(0) = 0$. The proof follows the arguments from \cite{debussche2011ergodic} and \cite{fuhrman2009ergodic} when the driver is Lipschitz.

\begin{thm}\label{Uthmebsde}[Uniqueness - \cite{liang2017representation}]
Assume that Assumptions \ref{weakdissass} and \ref{Fgrowthass} hold. Let $(y, z)$, $(\tilde{y}, \tilde{z})$, two functions such that:
\begin{itemize}
    \item [i)]$y$, $\tilde{y}: \R^{d} \to \R$ are continuous, sub-linear and $y(0) = \tilde{y} (0)$. 
    \item [ii)]$z$, $\tilde{z}: \R^{d} \to (\R^{*})^d$ are measurable and bounded by $Z_{max}$.
\end{itemize}
Also assume that for some constants $\lambda, \tilde{\lambda}$ and for all $v \in \R^{d'}$, the triplets $\PAR{y(V_{t}^{v}), z(V_{t}^{v}), \lambda}_{t\geq 0}$ and $\PAR{\tilde{y}(V_{t}^{v}), \tilde{z}(V_{t}^{v}), \tilde{\lambda}}_{t\geq 0}$ satisfy the ergodic BSDE \eqref{ebsde}.

Then $\lambda = \tilde{\lambda}$, $y(V_t^{v}) = \tilde{y}(V_t^{v})$ and $z(V_{t}^{v}) = \tilde{z}(V_{t}^{v})$ $\p$-a.s and for a.e $t \geq 0$. 
\end{thm}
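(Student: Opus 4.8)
The plan is to reduce to the globally Lipschitz case and then run the classical linearisation-plus-change-of-measure argument of \cite{fuhrman2009ergodic, debussche2011ergodic}, exploiting the constant volatility of $V$. First, since $z$ and $\tilde{z}$ are bounded by $Z_{\max}$, both triplets also solve the ergodic BSDE \eqref{ebsde} with the truncated driver $\hat{F} := F \circ \varphi_{Z_{\max}}$, which is globally Lipschitz in $(v,z)$ by \eqref{Flipschv}--\eqref{Flipschz}. I would set $\bar{Y}_t = y(V^v_t) - \tilde{y}(V^v_t)$, $\bar{Z}_t = z(V^v_t) - \tilde{z}(V^v_t)$ and $\bar{\lambda} = \lambda - \tilde{\lambda}$. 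Subtracting the two equations and using the Lipschitz property in $z$ to linearise, one writes $\hat{F}(V_s, Z_s) - \hat{F}(V_s, \tilde{Z}_s) = \beta_s^{\top} \bar{Z}_s$ with $\beta$ adapted and $\NRM{\beta_s} \leq C_z(1 + 2 Z_{\max})$; in the Markovian setting $\beta_s = g(V^v_s)$ for a bounded measurable $g$. This gives, for all $0 \leq t \leq T$, the linear relation $\bar{Y}_t = \bar{Y}_T + \int_t^T \beta_s^{\top} \bar{Z}_s\, ds - \bar{\lambda}(T-t) - \int_t^T \bar{Z}_s^{\top} dW_s$.

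Next I would change measure. Define $\q$ on $\F_T$ by the Girsanov density $\mathcal{E}\PAR{\int_0^{\cdot} \beta_s^{\top} dW_s}_T$; since $\beta$ is bounded this is a true martingale and $\tilde{W} = W - \int_0^{\cdot} \beta_s\, ds$ is a $\q$-Brownian motion. Under $\q$ the equation reads $\bar{Y}_t = \bar{Y}_T - \bar{\lambda}(T-t) - \int_t^T \bar{Z}_s^{\top} d\tilde{W}_s$, and because $\bar{Z}$ is bounded the stochastic integral is a genuine $\q$-martingale, so $\bar{Y}_t = \e^{\q}[\bar{Y}_T \mid \F_t] - \bar{\lambda}(T-t)$. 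Taking $t=0$ (so $V_0 = v$) and dividing by $T$, the left-hand side $\bar{y}(v)/T \to 0$. Moreover, under $\q$ the factor solves the autonomous SDE with drift $\mu + \kappa g$, and since $\mu$ satisfies the dissipative bound \eqref{disscond} (with $\bar{v}=0$) while $\kappa g$ is bounded, a standard Gronwall estimate gives $\sup_{T} \e^{\q} \NRM{V^v_T}^2 < \infty$; combined with the sublinearity of $\bar{y}$ this yields $\e^{\q}[\bar{y}(V^v_T)]/T \to 0$. Hence $\bar{\lambda} = 0$, that is $\lambda = \tilde{\lambda}$.

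With $\bar{\lambda} = 0$ one has $\bar{y}(v) = \e^{\q}[\bar{y}(V^v_T)] = P_T \bar{y}(v)$ for every $T$, where $P_T$ is the Markov semigroup of $V$ under $\q$. Using the uniform moment bound and the exponential contraction $\NRM{V^v_t - V^{v'}_t} \leq e^{-C_\mu t}\NRM{v - v'}$ (which holds \emph{pathwise} under $\p$, since with constant volatility the difference has no martingale part and \eqref{disscond} with Gronwall applies), one obtains a unique invariant measure $\nu$ and convergence in law $V^v_T \Rightarrow \nu$. As $\bar{y}$ is sublinear and the moments are uniformly integrable, $P_T \bar{y}(v) \to \int \bar{y}\, d\nu$, a constant independent of $v$; passing to the limit in $\bar{y}(v) = P_T \bar{y}(v)$ shows $\bar{y}$ is constant, and $\bar{y}(0) = 0$ forces $\bar{y} \equiv 0$, i.e. $y(V^v_t) = \tilde{y}(V^v_t)$. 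Finally, $\bar{Y} \equiv 0$ reduces the linearised equation to $0 = \int_t^T \beta_s^{\top} \bar{Z}_s\, ds - \int_t^T \bar{Z}_s^{\top} dW_s$, and identifying the martingale part forces $\bar{Z} = 0$ $dt \otimes d\p$-a.e., giving $z(V^v_t) = \tilde{z}(V^v_t)$.

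The delicate point I expect is the passage from the invariance $\bar{y} = P_T \bar{y}$ to $\bar{y}$ constant. The clean pathwise contraction above is a statement about the $\p$-dynamics; under $\q$ the factor carries the extra drift $\kappa g$, which is only bounded and measurable, so it is neither Lipschitz nor dissipative and one cannot simply couple two $\q$-copies to get contraction. Thus the uniqueness of the invariant measure $\nu$ and the convergence $P_T \bar{y} \to \int \bar{y}\, d\nu$ for the \emph{unbounded} (sublinear) test function $\bar{y}$ must be extracted from the $\p$-level exponential coupling together with the uniform moment bound, exactly as in \cite{fuhrman2009ergodic, debussche2011ergodic}. Everything upstream --- the reduction to $\hat{F}$, the linearisation, the Girsanov step and the $1/T$ argument identifying $\lambda$ --- is routine.
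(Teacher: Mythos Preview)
The paper does not give its own proof of this theorem; it attributes the result to \cite{liang2017representation} and only remarks that ``the proof follows the arguments from \cite{debussche2011ergodic} and \cite{fuhrman2009ergodic} when the driver is Lipschitz.'' Your proposal follows precisely that route --- truncation to the Lipschitz driver $\hat F$, linearisation, Girsanov, the $1/T$ argument for $\lambda$, and ergodicity for $y$ --- and the delicate point you flag is indeed the one handled in those references: the bounded perturbation $\kappa g$ of the dissipative drift $\mu$, together with the uniform ellipticity coming from the constant nondegenerate $\kappa$, is enough to obtain the uniform moment bound, a unique invariant measure for the $\q$-dynamics, and convergence of the sublinear observable $\bar y$.
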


 \paragraph{Initial condition} In the case of ergodic BSDEs derived from forward utilities, an initial condition $Y_0 =y_0$ for the process $Y$ is naturally fixed since the initial agent's utility and wealth $u_0 (x_0)$ are known. 
For instance in the case of power forward utilities \eqref{powut}, we have  
\begin{equation*}
y_0=  \log(\delta u_0(x_0))  - \delta\log(x_0). 
\end{equation*}
We are thus interested in solutions of the ergodic BSDE \eqref{ebsde} with \textit{fixed} initial condition
\begin{eqnarray}
\nonumber Y_{t} &=& Y_{T} + \int_{t}^{T} F(V_{s}, Z_{s})ds - \lambda (T-t) - \int_{t}^{T} Z_{s}^{\top} dW_{s}, \quad \forall \; 0\leq t \leq T.  \\
Y_0 & = &  y_0.  \label{ebsdeInitCond}
\end{eqnarray}
Using the notation of Theorem \ref{Uthmebsde}, let $(y(V_t), z(V_t), \lambda)_{t\geq 0}$ be the unique solution of \eqref{ebsde}, such that $y(0) = 0$. Then, $(Y,  Z,\lambda)$, with 
\begin{equation}
\label{eq:ebsdeInitcondYZ}
    Y_t = y(V_t) + y_0 - y(V_0), \quad Z_t = z(V_t) \quad \forall \;  t\geq 0, 
\end{equation}
is a solution of \eqref{ebsdeInitCond}. Note that the solution is not anymore Markovian, since $Y$ depends on the stochastic factor's initial condition $V_0$. Next, we define the unique solution of the ergodic BSDE \eqref{ebsdeInitCond} with fixed initial condition as the triplet $(Y, Z, \lambda)_{t\geq 0}$, with $(Y,Z)$ as in \eqref{eq:ebsdeInitcondYZ}. With a sligh abuse of language, we will occasionally refer to this solution as the unique "Markovian" solution of \eqref{ebsdeInitCond}.

\section{Connection with BSDE with random terminal time} \label{section:connection}

Simulating the ergodic BSDE \eqref{ebsdeInitCond} presents several additional challenges, to the simulation of standard BSDEs with finite time horizons. In particular:
\begin{enumerate}
\item There is an additional unknown $\lambda \in \R$, which controls the time growth of the component $Y$.
\item It is an infinite horizon backward stochastic differential equation, and thus the equality
\begin{eqnarray*}
Y_{t} = Y_{T} + \int_{t}^{T} \PAR{F(V_{s}, Z_{s}) - \lambda}ds - \int_{t}^{T} Z_{s}^{\top} dW_{s},
\end{eqnarray*}
needs to hold for \it{any} $ T > 0$, \, for all $0 \leq t \leq T.$ Thus, there is no known terminal condition as is usually the case in numerical schemes for BSDEs with backward time discretization.
\end{enumerate}
However, we may take advantage of recurrence properties of the stochastic factor $V$ to establish a connection between the ergodic BSDE and a BSDE with random terminal time, thus introducing a terminal condition. More precisely, when the stochastic factor $V$ is one dimensional ($d'=1$), knowing the initial condition $Y_0 $ of the ergodic BSDE allows us to introduce an analogous ``ergodic BSDE with random time horizon'' $\tau$. The infinite time horizon is replaced by $\tau$, the first return time after a minimal horizon $T$ of the diffusion $V$ to the initial point $V_{0}$. With this choice of random terminal time, the associated BSDE with random horizon has a known terminal condition $Y_\tau = Y_0$, and hence can be approximated numerically.

\subsection{First return time of the stochastic factor $V$} \label{recsect}

Unless stated otherwise, we assume  in the sequel that the stochastic factor $V$ is one dimensional ($d'=1$). We then fix a minimal horizon $T$ and start with some properties of the first return time after $T$ of the diffusion $V$ to its initial value, denoted by $v_{0}\in \R$,
\begin{eqnarray} \label{deftau}
\tau= \inf \BRA{t > T, \, V_{t} = v_{0}}.
\end{eqnarray}
%
%
The Lemma \ref{lem:expint} gives sufficient conditions for the exponential integrability of the hitting time $\tau$, in order to study the ``ergodic BSDE with random horizon'' introduced in the next section. Theorem 1.1 in \cite{loukianov2011spectral} provides sufficient conditions for exponential integrability of hitting time for continuous Markov processes. More precisely, the authors obtain a lower bound for the highest order of exponential moment of $\tau$ in terms of the scale function and the speed measure, uniformly with respect to the initial condition. We adapt this result to our framework:

\begin{lem}
\label{lem:expint}
Let $\displaystyle s(x) = \exp \PAR{- 2 \int_{0}^{x} \frac{\mu(u)}{\NRM{\kappa}^{2}} du}$
\begin{eqnarray*}
& &B_{v_{0}}^{+} := \underset{x \geq v_{0}}{\sup} \PAR{\int_{v_{0}}^{x} s(u)du \int_{x}^{+ \infty} \frac{2}{\NRM{\kappa}^{2} s(u)}du }, \\
\text{and} & &B_{v_{0}}^{-} := \underset{x \leq v_{0}}{\sup} \PAR{\int_{x}^{v_{0}} s(u)du \int_{- \infty}^{x} \frac{2}{\NRM{\kappa}^{2} s(u)}du }.
\end{eqnarray*}
Let $K_{z}:= C_{z} (1 + 2Z_{\max})$ be the Lipschitz constant with respect to $z$ of the truncated driver $F \circ \phi_{Z_{\max}}$ given in \eqref{Flipschz} and assume that 
\begin{equation}\label{lemma:Kz}
    K_{z}^{2} < \ \frac{1}{4 \max \PAR{B_{v_{0}}^{-}, B_{v_{0}}^{+}}}.
\end{equation}
Then, $\exists  \; \gamma > K_z^2$ such that 
\begin{equation}
    \mathbb E[\exp(\gamma \tau) ] <\infty. 
\end{equation}
\end{lem}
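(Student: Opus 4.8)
The plan is to reduce the return time $\tau$ to an ordinary hitting time and then import the one-dimensional spectral theory of \cite{loukianov2011spectral}. Since $T$ is deterministic, the Markov property of $V$ at time $T$ gives, for any $\gamma > 0$,
\begin{equation*}
\e\SBRA{e^{\gamma \tau}} = e^{\gamma T}\, \e\SBRA{\e_{V_T}\SBRA{e^{\gamma H_{v_0}}}}, \qquad H_{v_0} := \inf\BRA{t \geq 0 : V_t = v_0},
\end{equation*}
where $\e_x$ denotes expectation for the diffusion started at $x$ and where we used that, conditionally on $\F_T$, the law of $\tau - T$ coincides with that of $H_{v_0}$ started from $V_T$. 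Consequently it suffices to exhibit some $\gamma > K_z^2$ for which $x \mapsto \e_x\SBRA{e^{\gamma H_{v_0}}}$ is bounded \emph{uniformly} in $x$; integrating this constant against the law of $V_T$ and multiplying by $e^{\gamma T}$ then yields $\e\SBRA{e^{\gamma\tau}} < \infty$.

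Next I would identify the scale and speed characteristics of $V$. For the one-dimensional diffusion \eqref{stochfact}, whose generator is $\tfrac{1}{2}\NRM{\kappa}^2 \partial_{xx} + \mu\,\partial_x$, the scale derivative is exactly the function $s$ of the statement and the speed measure has density $m(x) = \tfrac{2}{\NRM{\kappa}^2 s(x)}$. With this identification, $B_{v_0}^{+}$ and $B_{v_0}^{-}$ are precisely the one-sided Muckenhoupt (weighted Hardy) constants associated with the half-lines $[v_0, +\infty)$ and $(-\infty, v_0]$, with $v_0$ imposed as a Dirichlet boundary. These are the quantities governing the bottom of the spectrum of the diffusion killed at $v_0$: by the classical Muckenhoupt bound, the spectral gaps $\lambda_0^{\pm}$ on each side satisfy $\lambda_0^{\pm} \geq \tfrac{1}{4 B_{v_0}^{\pm}}$, and $\e_x\SBRA{e^{\gamma H_{v_0}}}$ is finite, uniformly in $x$, for every $\gamma < \min(\lambda_0^+, \lambda_0^-)$. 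This is exactly the content of Theorem 1.1 in \cite{loukianov2011spectral}, which I would invoke, after matching its normalization to ours, to obtain the uniform-in-$x$ exponential moment bound for all $\gamma < \tfrac{1}{4\max(B_{v_0}^-, B_{v_0}^+)}$.

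Finally, the hypothesis \eqref{lemma:Kz} states exactly that $K_z^2 < \tfrac{1}{4\max(B_{v_0}^-, B_{v_0}^+)}$, so the interval $\PAR{K_z^2,\; \tfrac{1}{4\max(B_{v_0}^-, B_{v_0}^+)}}$ is nonempty and any $\gamma$ in it satisfies both $\gamma > K_z^2$ and the uniform exponential integrability of the previous step. Combining with the reduction of the first paragraph gives $\e\SBRA{e^{\gamma\tau}} < \infty$, as claimed. (Note that finiteness of $B_{v_0}^{\pm}$ is implicitly forced by \eqref{lemma:Kz}, and is consistent with the dissipativity of Assumption \ref{weakdissass}, which makes $s$ grow, respectively decay, fast enough at $\pm\infty$ for the defining suprema to be finite.)

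The delicate point is the second paragraph: translating the hypotheses and normalization of \cite{loukianov2011spectral} into ours so that the threshold comes out exactly as $\tfrac{1}{4\max(B_{v_0}^-, B_{v_0}^+)}$ — in particular tracking the factor $4$ coming from the two-sided Muckenhoupt inequality — and, crucially, ensuring that the bound on $\e_x\SBRA{e^{\gamma H_{v_0}}}$ is genuinely \emph{uniform} in the starting point $x$, since it is this uniformity that allows the passage from the return time $\tau$ to a hitting time after conditioning on $\F_T$ to go through.
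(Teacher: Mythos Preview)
Your reduction via the Markov property at time $T$ is correct and is exactly how the paper starts. The gap is in the second step: the function $x\mapsto \e_x\SBRA{e^{\gamma H_{v_0}}}$ is \emph{not} uniformly bounded in $x$, and Theorem~1.1 of \cite{loukianov2011spectral} does not assert that it is. For a concrete counterexample, take $V$ to be an Ornstein--Uhlenbeck process: the hitting time $H_{v_0}$ from a starting point $x$ tends to $+\infty$ in probability as $|x|\to\infty$, and correspondingly $\e_x\SBRA{e^{\gamma H_{v_0}}}\to\infty$. What \cite{loukianov2011spectral} does give you (Theorem~1.1 together with their Proposition~1.2) is that $W(x):=\e_x\SBRA{e^{\gamma H_{v_0}}}<\infty$ for every $x$ \emph{and} that $W$ is integrable against the invariant measure $\mu$ of $V$, i.e.\ $\mu(W)<\infty$. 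Your ``delicate point'' is therefore not a technicality to be checked but a genuine obstruction: the uniformity you need is simply false, and the argument as written does not close.

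The paper patches this with an ergodicity argument rather than a uniform bound. Since $V$ is uniformly elliptic with constant diffusion, $W$ is a Lyapunov function for $V$ (they cite \cite{cattiaux2013}), and then an exponential-convergence-to-equilibrium result (Theorem~A.2 in \cite{weinan2001ergodicity}) gives $\ABS{\e\SBRA{W(V_T)}-\mu(W)}\leq 2Be^{-rT}$, whence $\e\SBRA{W(V_T)}\leq \mu(W)+2Be^{-rT}<\infty$. In other words, one does not need $W$ to be bounded; one needs the law of $V_T$ to be controlled by the invariant measure in a $W$-weighted sense, and the dissipativity of Assumption~\ref{weakdissass} is precisely what provides this. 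If you want to repair your proof along its own lines, you must replace the uniform-bound step by an argument of this type.
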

\begin{proof}
Let $\lambda \leq \displaystyle\frac{1}{4 \max \PAR{B_{v_{0}}^{-}, B_{v_{0}}^{+}}}$, and $\mu$ the unique invariant measure of $V$. Recall that $\tau \geq T$, and let
$   W(v)=   \e[\exp(\lambda \tau)| V_{T}=v]$. By the Markov property of V, 
\begin{equation*}
    \mathbb E[\exp(\lambda \tau) ] =  \mathbb E[W(V_{T})].
\end{equation*}
First, by Theorem 1.1 in \cite{loukianov2011spectral}, we have $W(v) < \infty,  \forall \; v \in \R, $
and equivalently (see Proposition 1.2 in \cite{loukianov2011spectral}), $$\displaystyle \mu(W)=\int_{-\infty}^{+\infty} W(v) \mu(d v) <\infty.$$
It remains to prove that $\e[W(V_{T})]<\infty$. Since the diffusion coefficient of the stochastic factor $V$ is constant, $V$ is a uniformly elliptic diffusion, and it follows that $W$ is Lyapunov function for the diffusion $V$ (see e.g. Theorem 2.3 in \cite{cattiaux2013}). \\
Using similar arguments as in the proof of Theorem 7 in \cite{hu2019ergodic}, the hypothesis of Theorem A.2 in \cite{weinan2001ergodicity} is verified with $V= W$. Applying the result to $f=W$, we obtain that 
\begin{align*}
 \e[W(V_{T})] &\leq    |\e[W(V_{T})] - \mu(W) | + \mu(W) \\
 &\leq 2Be^{-rT} + \mu(W)  <\infty.  
\end{align*}

\end{proof}

\begin{rmq}
In higher dimensions, results on positive recurrence of Markov diffusion process associated with a stochastic differential equation stands for hitting time of any non empty open set in $\R^{d}$. Working with $\tau^{\epsilon}$ the first hitting time of the Euclidean ball of $\R^{d}$ of center $v_{0}$ and radius $\epsilon$, we may only expect the value $y(V_{\tau^{\epsilon}})$ to be close to $y(v_0)$, by the continuity of the Markovian solution $y$. We can thus only hope to obtain an approximation of the solution of the ergodic BSDE, by investigating stability results for ergodic BSDE with perturbed terminal condition. This is left for future work.
\end{rmq}

\subsection{BSDE with random terminal time}

Under Assumptions \ref{weakdissass} and \ref{Fgrowthass}, there exists a unique solution $(y(V_t) + y_0 - y(V_0), z(V_t), \lambda)_{t\geq 0 }$ to the ergodic BSDE \eqref{ebsdeInitCond} such that $Y_0 =y_0$, $y$ is sub-linear with respect to $v$, and $z$ is bounded by $Z_{max}$. By construction, $(Y_t, Z_t, \lambda)_{t\geq 0 }$ is also solution of the following ``ergodic'' BSDE with random time horizon and fixed initial condition
\begin{eqnarray} \label{rdtBSDE}
Y^{r}_{t} &=& Y^{r}_{\tau} + \int_{t}^{\tau} F(V_{s}, Z^{r}_{s})ds - \lambda (\tau- t) - \int_{t}^{\tau} Z^{r \top}_{s} dW_{s}, \\
Y_{\tau}^{r} &=& Y_{0}^r = y_{0}, \nonumber
\end{eqnarray}
with $\tau$ the return time defined in \eqref{deftau}. 

In analogy, solution $(Y^{r}, Z^{r}, \lambda)$ of \eqref{rdtBSDE} can be studied directly. Theorem \ref{thm:BSDERandomtime} provides sufficient conditions, under which the solution of \eqref{rdtBSDE} coincides with the solution of the ergodic BSDE \eqref{ebsdeInitCond}. The setting is slightly different from usual BSDEs with random time horizon, since we have an additional unknown $\lambda \in \R$ and both the terminal and initial conditions are fixed. However, we can show that this constant is uniquely determined by the fixed initial and terminal conditions $ Y^r_\tau = Y^r_0=y_0$. Once $\lambda$ is known, the uniqueness of the solution to this BSDE with generator $ F(v, z) - \lambda$ can be obtained as a consequence of standard results for BSDEs with random terminal time. We apply here the general result Theorem $3.2$ from \cite{pardoux1998backward}, which requires integrability conditions for the stopping time $\tau$.  

\begin{thm}
\label{thm:BSDERandomtime}
Assume that \eqref{lemma:Kz} and Assumptions \ref{weakdissass}, \ref{Fgrowthass} hold.
Then, the ergodic BSDE with random time horizon \eqref{rdtBSDE} and fixed initial condition admits a unique solution $(Y, Z, \lambda)$, such that $ Y \in \Sr^{2}(\gamma, \tau)$ for all $\displaystyle K_{z}^{2} < \gamma \leq \frac{1}{4 \max(B_{v_{0}}^{-}, B_{v_{0}}^{+})}$ and $Z$ is bounded. 

\noindent In particular, $(Y, Z, \lambda)$ coincides on $[0,\tau]$ with the unique solution $(y(V_t) + y_0 - y(V_0), z(V_t), \lambda)_{t\geq 0 }$ of the ergodic BSDE \eqref{ebsdeInitCond} such that $Y_0 =y_0$, $y$ is sub-linear, and $z$ is bounded.
\end{thm}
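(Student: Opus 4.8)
The plan is to take existence essentially for granted from the construction preceding the statement and to concentrate on uniqueness, which I would split into two independent steps: first pin down the scalar $\lambda$, then invoke a standard uniqueness theorem for BSDEs with random terminal time once $\lambda$ is frozen. For existence, recall that Proposition \ref{existebsde} together with \eqref{eq:ebsdeInitcondYZ} produces a solution $(Y_t, Z_t, \lambda) = (y(V_t) + y_0 - y(V_0), z(V_t), \lambda)$ of \eqref{ebsdeInitCond}; restricting to $[0,\tau]$ and using $Y_\tau = Y_0 = y_0$ shows it also solves \eqref{rdtBSDE}. Its membership in $\Sr^{2}(\gamma, \tau)$ would follow by combining the sub-linear growth of $y$, the uniform-in-time moment bounds on the recurrent factor $V$, and the exponential integrability of $\tau$ from Lemma \ref{lem:expint}, the admissible range $K_z^2 < \gamma \le (4\max(B_{v_0}^-, B_{v_0}^+))^{-1}$ being tailored precisely so that these estimates combine; here $Z = z(V)$ is bounded by $Z_{\max}$.

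For the uniqueness of $\lambda$, I would let $(Y^r, Z^r, \lambda^r)$ be any solution in the stated class and compare it to the solution $(Y, Z, \lambda)$ above. Writing $\delta Y = Y^r - Y$, $\delta Z = Z^r - Z$, $\delta\lambda = \lambda^r - \lambda$, and using that both $Z^r$ and $Z$ are bounded by $Z_{\max}$ so that one may work with the truncated driver $F \circ \varphi_{Z_{\max}}$, the difference of generators linearises as $F(V_s, Z^r_s) - F(V_s, Z_s) = \beta_s^\top \delta Z_s$ with $\NRM{\beta_s} \le K_z$ by \eqref{Flipschz}. Introducing the equivalent measure $\q$ with density $\E(\int_0^\cdot \beta_s^\top dW_s)$ and the $\q$-Brownian motion $\tilde W = W - \int_0^\cdot \beta_s\, ds$ via Girsanov, the linear difference equation collapses to $\delta Y_t = \delta Y_\tau - \delta\lambda(\tau - t) - \int_t^\tau \delta Z_s^\top d\tilde W_s$. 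Evaluating at $t = 0$ and using $\delta Y_0 = \delta Y_\tau = 0$ gives $\delta\lambda\,\tau = -\int_0^\tau \delta Z_s^\top d\tilde W_s$; taking $\e^{\q}$-expectation, the stochastic integral vanishes (it is a true martingale since $\delta Z$ is bounded and $\e^{\q}[\tau] < \infty$), whence $\delta\lambda\, \e^{\q}[\tau] = 0$ and $\delta\lambda = 0$ because $\tau \ge T > 0$.

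With $\lambda$ now fixed, \eqref{rdtBSDE} becomes a genuine BSDE with random terminal time $\tau$, terminal value $y_0$, and generator $\tilde F(v, z) = F(v, z) - \lambda$, which is independent of $y$ and Lipschitz in $z$ with constant $K_z$ on the relevant bounded range. I would then verify the hypotheses of Theorem $3.2$ in \cite{pardoux1998backward}: the monotonicity in $y$ is trivial, the $z$-Lipschitz constant equals $K_z$, and the exponential moment $\e[\exp(\gamma\tau)] < \infty$ with $\gamma > K_z^2$ supplied by Lemma \ref{lem:expint} is exactly the integrability required for uniqueness in the weighted space $\Sr^{2}(\gamma, \tau)$. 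This gives $(Y^r, Z^r) = (Y, Z)$ on $[0,\tau]$ and identifies the solution with the ``Markovian'' solution of \eqref{ebsdeInitCond}.

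I expect the main obstacle to be the integrability bookkeeping underpinning the Girsanov step and the application of Pardoux's theorem. One must check that the change of measure by the bounded drift $\beta$ keeps $\tau$ integrable under $\q$: via H\"older, using that $\tau$ has all polynomial moments together with a sufficiently large exponential moment, one chooses $p>1$ close to $1$ so that $(p^2 - p)K_z^2$ stays below the threshold $(4\max(B_{v_0}^-, B_{v_0}^+))^{-1}$, which controls $\e^{\p}[(d\q/d\p)^p_\tau]$ and hence $\e^{\q}[\tau]$. Simultaneously, the weight $\gamma$ must exceed $K_z^2$ (for Pardoux's contraction) while staying below the exponential-integrability threshold, and the delicate calibration of $\gamma$ between these two competing constraints is precisely the crux of the argument.
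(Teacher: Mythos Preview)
Your three-step architecture (existence via the Markovian solution, $\lambda$-uniqueness via linearisation and Girsanov, $(Y,Z)$-uniqueness via Pardoux's Theorem~3.2) matches the paper's exactly. The one substantive difference is in how you execute the $\lambda$-uniqueness: you work directly on $[0,\tau]$ and conclude from $\delta\lambda\,\e^{\q}[\tau]=0$, relying on $\e^{\q}[\tau]<\infty$ (which you correctly flag as the crux and justify via H\"older from the exponential moment of $\tau$). The paper instead localises at $T\wedge\tau$, obtains $\tilde\e[(T\wedge\tau)\Delta\lambda]=\tilde\e[\Delta Y_{T\wedge\tau}]$, bounds $\tilde\e[\Delta Y_T\ind_{\tau>T}]$ using sub-linearity of both solutions together with the uniform-in-$T$ moment bound $\sup_T\tilde\e[|V_T|^2]<\infty$ from \cite{hu2019ergodic}, and lets $T\to\infty$. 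Your route is arguably cleaner for the class stated in the theorem, since it does not invoke sub-linearity of a generic solution $Y^r\in\Sr^2(\gamma,\tau)$ (a property the paper uses in its proof but does not list among the hypotheses); the price you pay is the extra H\"older bookkeeping to transfer integrability of $\tau$ to $\q$, which the paper avoids by only needing $\tilde\p(\tau>T)\to0$.
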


\begin{preuve}
The proof is done in three steps. First we show that the existence of such a solution to \eqref{rdtBSDE} is obtained directly from the existence of a solution to the eBSDE \eqref{ebsdeInitCond}. Secondly, we establish the uniqueness of the parameter $\lambda$, using a linearisation technique. The uniqueness of $(Y,Z)$ is then obtained by applying Theorem $3.2$ from \cite{pardoux1998backward}.

\noindent
\textbf{Existence -} By construction, the unique solution $(y(V_{t})- y_0 + y(V_0), z(V_{t}), \lambda)_{t \geq 0}$ of the ergodic BSDE \eqref{ebsdeInitCond} is also solution of the BSDE with random terminal time \eqref{rdtBSDE}. By construction, $Z$ is bounded by $Z_{max}$ and thanks to Theorem 3.2 in \cite{pardoux1998backward}, we deduce that $ Y \in \Sr^{2}(\gamma, \tau)$ .
 
\noindent
\textbf{Uniqueness of $\lambda$ -} Let $(Y^{r}, Z^{r}, \lambda)$ and $(\bar{Y}^{r}, \bar{Z}^{r}, \bar{\lambda})$ be two solutions of the ergodic BSDE with random terminal time \eqref{rdtBSDE} such that $Z^{r}$ and $\bar{Z}^{r}$ are bounded. Denote $\Delta Y_{t} = Y_{t}^{r} - \bar{Y}_{t}^{r}, \, \Delta Z_{t} = Z_{t}^{r} - \bar{Z}_{t}^{r}$ and $\Delta \lambda = \lambda - \bar{\lambda}$. The initial and terminal values of those two solution being equal to $y_{0}$ yields that $\Delta Y_{0}^{r} = \Delta Y_{\tau}^{r} = 0.$\\

\noindent Let $T > 0$, the difference between the two equations, between $0$ and $T \wedge \tau$, gives
    \begin{eqnarray}
(T \wedge \tau) \Delta \lambda &=& \Delta Y_{T \wedge \tau} + \int_{0}^{T \wedge \tau} \Delta Z_{s}^\top \PAR{\gamma_{s}ds - dW_{s}}, \label{linearizationeq} \\
\text{where} \quad \gamma_{s} &=& \left\{
    \begin{array}{ll}
        \displaystyle\frac{F(V_{s}, Z_{s}^{r}) - F(V_{s}, \overline{Z_{s}^{r}})}{\ABS{\overline{Z_{s}^{r}} - Z_{s}^{r}}^{2}} \PAR{\overline{Z_{s}^{r}} - Z_{s}^{r}} \quad \text{if} \, \ABS{\overline{Z_{s}^{r}} - Z_{s}^{r}} \neq 0 \\
        0 \quad \text{otherwise}.
    \end{array}
\right.
\end{eqnarray}
From Assumption \ref{Fgrowthass} and the boundedness of $Z^{r}$ and $\bar{Z^{r}}$, the process $\gamma$ is bounded. Then, from Girsanov's theorem, there exists a probability measure $\tilde \p$ under which the process $\tilde{W_{t}} = - \int_{0}^{t} \gamma_{s}ds + W_{t}$, is a Brownian motion. The stopped process $M_{t \wedge T} := \int_{0}^{t \wedge T} \Delta Z_{s}^\top  d\tilde{W_{s}}$ is a martingale under $\tilde \p$. Taking expectation of \eqref{linearizationeq} yields
\begin{eqnarray*}
\tilde{\e} \SBRA{(T \wedge \tau) \Delta \lambda} = \tilde{\e}\SBRA{\Delta Y_{T \wedge \tau}} = \tilde{\e} \SBRA{\Delta Y_{\tau} \ind_{\tau \leq T}} + \tilde{\e} \SBRA{\Delta Y_{T} \ind_{\tau > T}}.
\end{eqnarray*}
The first expectation on the right is zero since $\Delta Y_{\tau} = 0$, by the definition of the stopping time $\tau$. For the second term, using the sub-linearity property of $Y$ and $\bar{Y}$, we obtain
\begin{eqnarray}
\tilde{\e} \SBRA{\Delta Y_{T} \ind_{\tau > T}} &\leq& \tilde{\e} \SBRA{\Delta Y_{T}^{2}} \tilde{\p}(\tau > T)  \nonumber \\ 
&\leq& C (1 + \tilde{\e} \SBRA{\ABS{V_{T}}^{2}}) \tilde{\p}(\tau > T). \label{lim_eq_T}
\end{eqnarray}
By Proposition 5 in \cite{hu2019ergodic}, $\underset{T \geq 0}{\sup} \tilde{\e} \SBRA{\ABS{V_{T}}^{2}} < \infty.$ Moreover, $\tau$ is almost surely finite under $\p$ so that is also almost surely finite under the equivalent probability measure $\tilde{\p}$. Then, taking the limit of \eqref{lim_eq_T} as $T$ goes to infinity implies that $\Delta \lambda = 0.$ Hence, the component $\lambda$ of the solution of \eqref{rdtBSDE} is necessarily equal to the $\lambda$ solution of the ergodic BSDE \eqref{ebsdeInitCond}.

\noindent
\textbf{Uniqueness -} Consider the BSDE with random terminal time $\tau$, terminal condition $Y^{r}_{\tau} = y_{0}$ and generator $F \circ \phi_{Z_{\max}} (v, z) - \lambda$, where $\lambda$ is fixed. The uniqueness can be obtained by applying Theorem $3.1$ in \cite{pardoux1998backward}.

\end{preuve}

The solution of the ergodic BSDE \eqref{ebsdeInitCond} thus coincides with the solution $(Y^{r}, Z^{r}, \lambda)$ of the ergodic BSDE with random time horizon and fixed initial condition \eqref{rdtBSDE} on $\SBRA{0, \tau}$. We will omit the subscript $r$ in the sequel. 
This idea is fundamental to get a numerical approximation and simulation of the ergodic BSDE \eqref{ebsdeInitCond}.
This allows us to adapt numerical schemes for the simulation of BSDEs with random time horizon, (see \cite{bouchard2009discrete} and \cite{bouchard2009strong}) to our framework in Section \ref{section:numerical}. Furthermore, using this representation of the ergodic BSDE, a new representation of the ergodic cost $\lambda$ can be obtained under some additional assumption of the driver $F$.

\subsection{Characterization of the ergodic cost 
$\lambda$ for a class of eBSDEs} \label{section:semiexplicit}

The ergodic cost $\lambda$ can be interpreted in several ways. As mentioned in \cite{liang2017representation}, it is the long term growth rate of a risk sensitive control problem. It can also be estimated by $\dfrac{Y_0^T}{T}$ when $T\to \infty$, with $Y_0^T$ being the initial value of the solution of a BSDE with finite horizon $T$ (Theorem 21 in \cite{hu2019ergodic}). However, numerical schemes for the simulation of BSDEs are known to be unstable for large horizon, which can lead to a significant error when using this approach to approximate $\lambda$. 

Our viewpoint of ergodic BSDE up to a random horizon, with fixed initial and terminal values offers a new characterization of the ergodic cost $\lambda$. In fact, considering $\lambda$ as a parameter in the generator of a classical BSDE with random terminal time, it can be understood as the solution of an optimization problem on the initial value $y_{0}$. This is also the idea behind the deep learning algorithms we present in Section \ref{section:simulation}. The adaptation of Proposition $1.3$ from \cite{el2008backward} for linear BSDEs leads to a semi-explicit expression of the solution of \eqref{rdtBSDE} as well as an expression of the ergodic cost $\lambda$ as the ratio of two expectations, that can be computed numerically. We investigate separately the case of linear ergodic BSDEs, and the one of ergodic BSDEs with purely quadratic generator which first requires to apply the Cole-Hopf transform.

\paragraph{}We first consider a random time horizon ergodic BSDE as \eqref{rdtBSDE} with driver $F$ only depending on the stochastic factor $V$. This is the framework for the representation of logarithmic forward utilities \eqref{logut}, associated with driver $F$ defined by \eqref{driver_log}. When $F$ does not depend on $z$, taking the conditional expectation of \eqref{rdtBSDE} with respect to $\F_{t}$, and evaluating this expression at time $0$, together with the fixed initial condition $Y_{0} = y_{0}$ leads to the following characterization of the ergodic cost.

\begin{lem} \label{linlambd}
Let $(Y, Z, \lambda)$ be the unique solution of equation \eqref{rdtBSDE}, as defined in Theorem \ref{thm:BSDERandomtime}, and with a driver $F$ only depending on $V$. Then the ergodic cost $\lambda$ admits the representation
\begin{eqnarray} \label{MClambda1}
\lambda = \frac{\e \SBRA{\int_{0}^{\tau} F(V_{s}) ds}}{\e \SBRA{\tau}}.
\end{eqnarray}
\end{lem}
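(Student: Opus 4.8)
The plan is to exploit the fact that when the driver $F$ depends only on $V$, the ergodic BSDE with random terminal time \eqref{rdtBSDE} reduces to a \emph{linear} equation in which $Z$ has no influence on the drift, so the $Z$-dependent stochastic integral can be eliminated simply by taking an expectation. First I would write the equation \eqref{rdtBSDE} between time $0$ and the random terminal time $\tau$, using the fixed initial and terminal conditions $Y_0 = Y_\tau = y_0$. This gives
\begin{eqnarray*}
y_0 = Y_0 = Y_\tau + \int_{0}^{\tau} \PAR{F(V_s) - \lambda} ds - \int_{0}^{\tau} Z_s^{\top} dW_s = y_0 + \int_{0}^{\tau} F(V_s) ds - \lambda \tau - \int_{0}^{\tau} Z_s^{\top} dW_s,
\end{eqnarray*}
so that after cancelling $y_0$ the identity $\lambda \tau = \int_{0}^{\tau} F(V_s) ds - \int_{0}^{\tau} Z_s^{\top} dW_s$ holds $\p$-a.s.

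The second step is to take expectations on both sides. The crux is to argue that the stopped stochastic integral $\int_{0}^{\tau} Z_s^{\top} dW_s$ has zero expectation. This is where the integrability hypotheses of Lemma \ref{lem:expint} and Theorem \ref{thm:BSDERandomtime} are essential: since $Z = z(V)$ is bounded by $Z_{\max}$ and $\tau$ admits an exponential moment (in particular $\e[\tau] < \infty$), the quadratic variation $\e\SBRA{\int_0^\tau \NRM{Z_s}^2 ds} \leq Z_{\max}^2\, \e[\tau] < \infty$ is finite, so $\PAR{\int_0^{t\wedge\tau} Z_s^\top dW_s}_{t\geq 0}$ is a true (uniformly integrable) martingale and the optional stopping theorem gives $\e\SBRA{\int_0^\tau Z_s^\top dW_s} = 0$. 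Taking expectations then yields $\lambda\, \e[\tau] = \e\SBRA{\int_0^\tau F(V_s)\,ds}$.

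Finally, dividing by $\e[\tau]$ — which is legitimate because $\e[\tau] \in (0, \infty)$, positivity following from $\tau \geq T > 0$ and finiteness from the exponential integrability — gives the claimed representation \eqref{MClambda1}. I should also note that the right-hand numerator $\e\SBRA{\int_0^\tau F(V_s)\,ds}$ is finite: by Assumption \ref{Fgrowthass}, $\ABS{F(V_s)} \leq \ABS{F(V_s,0)} + \ABS{F(V_s,0) - F(0,0)} \leq K + C_v\NRM{V_s}$ grows at most linearly in $V_s$, and combined with the uniform-in-time $L^2$ bound on $V$ (Proposition 5 in \cite{hu2019ergodic}) together with $\e[\tau] < \infty$, this controls the integral via Cauchy--Schwarz. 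The main obstacle in a fully rigorous write-up is not any single computation but the careful justification of the martingale property of the stopped stochastic integral up to the random time $\tau$; everything else is an immediate consequence of the fixed boundary conditions $Y_0 = Y_\tau = y_0$ which make the $Y$ and $\lambda(T-t)$ telescoping terms collapse neatly.
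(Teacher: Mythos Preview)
Your proof is correct and follows essentially the same route as the paper: write \eqref{rdtBSDE} between $0$ and $\tau$, use the fixed boundary conditions $Y_0=Y_\tau=y_0$, take expectations so that the stochastic integral vanishes, and divide by $\e[\tau]$. The paper phrases this as ``taking the conditional expectation of \eqref{rdtBSDE} with respect to $\F_t$ and evaluating at $t=0$'', which amounts to the same computation; your additional care with the optional stopping argument and the integrability of $\int_0^\tau F(V_s)\,ds$ simply fills in details the paper leaves implicit (note, incidentally, that Assumption~\ref{Fgrowthass}(i) already gives $|F(v,0)|\leq K$ directly, so the triangle-inequality detour is unnecessary).
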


\paragraph{Exponential forward utility without constraints} When there are no constraints on the portfolio, that is $\Pi = \R^{d}$, the driver $F_{\exp}$ defined by \eqref{driver_exp} and associated with exponential utilities, is linear in $z$ and given by
\begin{eqnarray} \label{driver_exp_NC}
F_{\exp}(v, z) = - \theta(v)^{\top} z - \frac{1}{2} \NRM{\theta(v)}^{2}.
\end{eqnarray}
The representation result for linear BSDEs can be adapted to our framework in order to obtain a representation of the ergodic cost in this case. Note that the following result can be generalized to any linear generator $F$. 

\begin{prop} \label{proplinexp}
Assuming there are no constraints on the portfolio, the unique solution of the ergodic BSDE with random terminal time \eqref{rdtBSDE}, as defined in Theorem \ref{thm:BSDERandomtime}, and with generator $F_{\exp}$ given by \eqref{driver_exp} is given for all $t \geq 0$ by
\begin{eqnarray} \label{Yseexp}
Y_{t} = \e \SBRA{y_{0} \Gamma_{t, \tau} - \int_{t}^{\tau} \Gamma_{t, s} \PAR{\frac{1}{2} \NRM{\theta(V_{s})}^{2} + \lambda}ds | \F_{t}}, \quad \text{a.s},
\end{eqnarray}
where
\begin{eqnarray}
    d \Gamma_{t, s} = - \Gamma_{t, s} \theta(V_{s})^{\top} dW_{s} \quad\text{and}\quad \Gamma_{t, t} = 1. 
\end{eqnarray}
The ergodic cost $\lambda$ is given by
\begin{eqnarray} \label{lambdalinexp}
\lambda = \displaystyle \frac{1}{\e \SBRA{\int_{0}^{\tau} \Gamma_{0, s}ds}} \PAR{ \e \SBRA{y_{0} \Gamma_{0, \tau} - \frac{1}{2} \int_{0}^{\tau} \Gamma_{0, s} \NRM{\theta(V_{s})}^{2}ds} - y_{0}}.
\end{eqnarray}
\end{prop}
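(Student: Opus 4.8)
The plan is to observe that, in the absence of portfolio constraints, the generator \eqref{driver_exp_NC} is affine in $z$ and independent of $y$, so that \eqref{rdtBSDE} is a \emph{linear} BSDE with random terminal time $\tau$ and fixed terminal value $Y_\tau = y_0$. The strategy is therefore to apply the linear representation via the adjoint process $\Gamma$ (the adaptation of Proposition $1.3$ of \cite{el2008backward}), after checking that the integrability hypotheses hold in our setting. These are available here: by Assumption \ref{asslispch} the risk premium $\theta$ is bounded, by Proposition \ref{existebsde} the component $Z$ is bounded by $Z_{\max}$, $Y \in \Sr^{2}(\gamma,\tau)$ by Theorem \ref{thm:BSDERandomtime}, and by Lemma \ref{lem:expint} the horizon $\tau$ admits an exponential moment of some order $\gamma > K_z^2$. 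Since $\Gamma_{t,\cdot}$ is the Dol\'eans--Dade exponential of the bounded integrand $-\theta(V)$, it is a true martingale with moments of every order, which is what makes the representation applicable.

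Concretely, I would apply It\^o's formula to the product $\Gamma_{t,s} Y_s$ on $[t,\tau]$. Using $dY_s = \PAR{\theta(V_s)^\top Z_s + \tfrac12 \NRM{\theta(V_s)}^2 + \lambda}\,ds + Z_s^\top dW_s$ and $d\Gamma_{t,s} = -\Gamma_{t,s}\theta(V_s)^\top dW_s$, the cross-variation term contributes $-\Gamma_{t,s}\theta(V_s)^\top Z_s\, ds$, which exactly cancels the $\theta^\top Z$ term in the drift of $Y$. What survives is
\[
d(\Gamma_{t,s}Y_s) = \Gamma_{t,s}\PAR{\tfrac12 \NRM{\theta(V_s)}^2 + \lambda}\,ds + \Gamma_{t,s}\PAR{Z_s - Y_s\theta(V_s)}^\top dW_s .
\]
Integrating from $t$ to $\tau$, using $\Gamma_{t,t}=1$ and $Y_\tau = y_0$, and taking $\e\SBRA{\,\cdot\mid\F_t}$ of both sides then yields exactly \eqref{Yseexp}, provided the stochastic integral has vanishing conditional expectation.

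The main obstacle is precisely this last point: showing that $\int_t^{\tau} \Gamma_{t,s}\PAR{Z_s - Y_s\theta(V_s)}^\top dW_s$ is a genuine (uniformly integrable) martingale over the \emph{random}, possibly unbounded horizon $[t,\tau]$, rather than merely a local one. I would first stop at $\tau \wedge T$, for which the conditional expectation of the stopped integral vanishes, and then let $T \to \infty$. The passage to the limit requires a uniform integrability bound on the integrand: combining the boundedness of $\theta$ and $Z$, the $\Sr^{2}(\gamma,\tau)$ control on $Y$ (which supplies the weight $e^{\gamma s}$), the moments of all orders of $\Gamma$, and the exponential moment of $\tau$ from Lemma \ref{lem:expint}, one balances the $e^{-\gamma s}$ decay against the $e^{\gamma s}$ integrability through H\"older's inequality, so that dominated convergence applies. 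This is exactly where the exponential integrability condition \eqref{lemma:Kz} is used.

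Finally, to obtain \eqref{lambdalinexp}, I would specialize the representation \eqref{Yseexp} to $t=0$, where $\Gamma_{0,0}=1$ and the left-hand side equals the fixed initial value $Y_0 = y_0$. This gives the scalar identity
\[
y_0 = \e\SBRA{y_0 \Gamma_{0,\tau} - \tfrac12 \int_0^{\tau}\Gamma_{0,s}\NRM{\theta(V_s)}^2\,ds} - \lambda\, \e\SBRA{\int_0^{\tau}\Gamma_{0,s}\,ds},
\]
which is affine in $\lambda$. Solving for $\lambda$ produces \eqref{lambdalinexp}, the only point to note being that the denominator $\e\SBRA{\int_0^{\tau}\Gamma_{0,s}\,ds}$ is finite (again by the integrability of $\tau$ and $\Gamma$) and strictly positive, since $\Gamma_{0,s}>0$ a.s. and $\tau>0$ a.s.
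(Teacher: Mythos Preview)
Your proposal is correct and follows essentially the same route as the paper: apply It\^o's formula to the product $\Gamma_{t,s}Y_s$, show that the resulting local martingale is a true (uniformly integrable) martingale over the random horizon, take conditional expectations to obtain \eqref{Yseexp}, and then specialize to $t=0$ to solve for $\lambda$. The only stylistic difference is in the justification of uniform integrability: the paper argues directly that $\sup_{0\leq s\leq\tau}|Y_{s\wedge\tau}|$ and $\sup_{0\leq s\leq\tau}\Gamma_{t,s\wedge\tau}$ both belong to $L^2$ (so their product is integrable by Cauchy--Schwarz), whereas you stop at $\tau\wedge T$ and pass to the limit via dominated convergence using the $\Sr^2(\gamma,\tau)$ bound and the exponential moment of $\tau$; both arguments are valid and lead to the same conclusion.
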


\begin{preuve}
The representation theorem for linear BSDEs (\cite{el2008backward}) can be extended to BSDE with random terminal time. Consider $(Y, Z)$ the unique Markovian solution to \eqref{rdtBSDE} in the sense of Theorem \ref{thm:BSDERandomtime} and define the stopped process $(M_{s}^{\tau})_{s \geq t}$,
\begin{eqnarray}
M_{s}^{\tau} := Y_{s \wedge \tau} \Gamma_{t, s \wedge \tau} - \frac{1}{2} \int_{t}^{s \wedge \tau} \Gamma_{t, u} \PAR{\NRM{\theta(V_{u})}^{2} + \lambda} du.
\end{eqnarray}
An application of Ito's formula to the product $Y_{s \wedge \tau} \Gamma_{t, s \wedge \tau}$ shows that $(M_{s}^{\tau})_{s \geq t}$ is a local martingale. Moreover, $\underset{0 \leq s \leq \tau}{\sup} \, Y_{s \wedge \tau}$ and $\underset{0 \leq s \leq \tau}{\sup} \Gamma_{t, s \wedge \tau}$ belong to $L^{2}$ so that the product $\underset{0 \leq s \leq \tau}{\sup} \, Y_{s \wedge \tau} \times \underset{0 \leq s \leq \tau}{\sup} \Gamma_{t, s \wedge \tau}$ is integrable. The martingale $(M_{s}^{\tau})_{s \geq t}$ is thus uniformly integrable, and its value at time $t$ equals the conditional expectation of its terminal value with respect to $\F_{t}$, from which follows \eqref{Yseexp}.

The actual value of $\lambda$ to recover the solution to the ergodic BSDE with random time horizon \eqref{rdtBSDE} with generator $F_{\exp}$ is then uniquely determined from the initial condition $Y_{0} = y_{0}.$ Evaluating \eqref{Yseexp} at time $t=0$ leads to the ergodic cost \eqref{lambdalinexp}, which concludes the proof.
\end{preuve}

\paragraph{Power forward utility without constraints} For ergodic BSDEs with random terminal time and quadratic generator of the form
\begin{eqnarray} \label{driverCH}
F(v, z) = l(v) + a(v)^{\top} z + \frac{\beta}{2} \NRM{z}^{2},
\end{eqnarray}
the Cole-Hopf transform can be used to recover the linear case and, in turn, a representation of the ergodic cost.

\begin{prop} \label{thm:colehopf}
Consider the ergodic BSDE with random terminal time \eqref{rdtBSDE} with generator $F$ given by \eqref{driverCH}
such that $l$ and $a$ are bounded. This equation admits a unique Markovian solution $(y, z, \lambda)$, where $y$ is sublinear, $z$ is bounded and $Y_{0} = y_{0}$, which is given for all $t \geq 0$, by
\begin{eqnarray}
    Y_{t} = \frac{1}{\beta} \ln \e \SBRA{e^{\beta y_{0}} \Gamma_{t, \tau}(\lambda) | \F_{t}}, \quad \text{a.s},
\end{eqnarray}
with
\begin{eqnarray*}
d \Gamma_{t, s}(\lambda) = \Gamma_{t, s}(\lambda) \PAR{ \beta(l(V_{s}) - \lambda)ds + a(V_{s})^{\top} dW_{s}} \quad\text{and}\quad
\Gamma_{t, t}(\lambda) = 1. 
\end{eqnarray*}
Moreover, the ergodic cost $\lambda$ is characterized as
\begin{eqnarray}
\lambda = \underset{\tilde\lambda \in \R}{\argmin} \ABS{\e \SBRA{\Gamma_{0, \tau}}(\tilde\lambda) - 1}.
\end{eqnarray}
\end{prop}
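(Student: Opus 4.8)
The plan is to reduce the quadratic BSDE to a linear one via the Cole--Hopf transformation and then invoke the linear representation already developed in Proposition~\ref{proplinexp}. Concretely, I would set $P_t := e^{\beta Y_t}$ and apply It\^o's formula. Writing the dynamics of $Y$ from \eqref{rdtBSDE} with the generator \eqref{driverCH}, namely $-dY_t = \PAR{l(V_t) + a(V_t)^\top Z_t + \tfrac{\beta}{2}\NRM{Z_t}^2 - \lambda}dt - Z_t^\top dW_t$, the second-order It\^o term produces a $\tfrac{\beta^2}{2}\NRM{Z_t}^2 P_t\,dt$ contribution which is designed to cancel the quadratic term $\tfrac{\beta}{2}\NRM{Z_t}^2$ after multiplication by $\beta P_t$. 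I expect this cancellation to be the heart of the computation: it turns the quadratic drift into a linear one, yielding $dP_t = P_t\PAR{-\beta(l(V_t)-\lambda)\,dt + \beta Z_t^\top\,dW_t} + (\text{terms in } a)$ that I can reorganise into the linear SDE governing $\Gamma_{t,s}(\lambda)$. The terminal condition transforms as $P_\tau = e^{\beta Y_\tau} = e^{\beta y_0}$.

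Next I would argue that $P_t$ satisfies a \emph{linear} BSDE with random terminal time, whose terminal value is $e^{\beta y_0}$ and whose linear coefficients are exactly $\beta(l(V_s)-\lambda)$ in the drift and $a(V_s)$ in the diffusion (the latter appearing after absorbing $a(V_t)^\top Z_t$ into the linearisation). The adjoint process of this linear BSDE is precisely $\Gamma_{t,s}(\lambda)$ as defined in the statement. Applying the linear representation formula --- the same uniform-integrability argument used in the proof of Proposition~\ref{proplinexp}, based on It\^o's formula for the product $P_{s\wedge\tau}\Gamma_{t,s\wedge\tau}(\lambda)$ and the $L^2$ bounds on the supremum of each factor over $[0,\tau]$ --- gives $P_t = \e\SBRA{e^{\beta y_0}\Gamma_{t,\tau}(\lambda)\mid \F_t}$. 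Taking logarithms and dividing by $\beta$ recovers the claimed representation of $Y_t$. Existence and uniqueness of the Markovian solution $(y,z,\lambda)$ with $y$ sublinear and $z$ bounded follow from Theorem~\ref{thm:BSDERandomtime}, since \eqref{driverCH} with bounded $l,a$ satisfies Assumption~\ref{Fgrowthass} (after truncation, the purely quadratic term being handled by the $Z_{\max}$ bound).

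Finally, for the characterisation of $\lambda$, I would evaluate the representation at $t=0$. Since $Y_0 = y_0$ is fixed, we get $e^{\beta y_0} = P_0 = \e\SBRA{e^{\beta y_0}\Gamma_{0,\tau}(\lambda)}$, which forces $\e\SBRA{\Gamma_{0,\tau}(\lambda)} = 1$. Thus $\lambda$ is characterised as the value making the expected adjoint process equal to one, and since the true $\lambda$ realises this exactly, it is equivalently the minimiser $\underset{\tilde\lambda\in\R}{\argmin}\ABS{\e\SBRA{\Gamma_{0,\tau}(\tilde\lambda)}-1}$. The $\argmin$ formulation is the numerically convenient restatement of the constraint $\e\SBRA{\Gamma_{0,\tau}(\lambda)}=1$, which can be estimated by Monte Carlo.

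I expect the main obstacle to be justifying the integrability and uniform-integrability conditions needed to apply the linear representation after the exponential change of variable: the process $P_t = e^{\beta Y_t}$ involves an exponential of $Y$, and one must verify that $\sup_{0\le s\le\tau}P_{s\wedge\tau}$ and $\sup_{0\le s\le\tau}\Gamma_{t,s\wedge\tau}(\lambda)$ lie in $L^2$ so that their product is integrable. This hinges on the exponential integrability of $\tau$ from Lemma~\ref{lem:expint} together with the sublinearity of $y$ and boundedness of $z$, $l$, and $a$; controlling $\e\SBRA{e^{2\beta Y_t}}$ uniformly in $t$ via the membership $Y\in\Sr^2(\gamma,\tau)$ and the choice of $\gamma$ relative to $K_z^2$ is the delicate point.
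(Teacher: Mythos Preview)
Your approach is essentially identical to the paper's: apply the Cole--Hopf transform $P_t = e^{\beta Y_t}$, $Q_t = \beta P_t Z_t$, use It\^o's formula to reduce to a linear BSDE for $(P,Q)$, invoke the linear representation as in Proposition~\ref{proplinexp} to obtain $P_t = \e\SBRA{e^{\beta y_0}\Gamma_{t,\tau}(\lambda)\mid\F_t}$, and then evaluate at $t=0$ to get $\e\SBRA{\Gamma_{0,\tau}(\lambda)}=1$. The one step the paper makes explicit and you omit is that the map $\tilde\lambda \mapsto \e\SBRA{\Gamma_{0,\tau}(\tilde\lambda)}$ is strictly monotone (immediate from the exponential form of $\Gamma$), which is what guarantees the $\argmin$ is a singleton and the characterisation is well posed.
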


\begin{preuve}
Let $(Y, Z, \lambda)$ be the solution of the ergodic BSDE with generator $F$ given by \eqref{driverCH} such that $Y_t = y(V_t) + y_0 - y(V_0)$, with $y$ is sublinear and $y(0)= 0$, and $Z$ is bounded. Let $P_{s} := e^{\beta Y_{s}}$ and $Q_{s}: = \beta P_{s} Z_{s}$. An application of Ito formula leads to
\begin{eqnarray}
P_{t} = P_{\tau} + \int_{t}^{\tau} \SBRA{\beta P_{s} \PAR{l(V_{s}) - \lambda} + a(V_{s})^{\top}Q_{s}}ds - \int_{t}^{\tau} Q_{s}^{\top}dW_{s}. \label{colhopgen}
\end{eqnarray}
For any fixed $\lambda \in \R$, working as in the proof of Proposition \ref{proplinexp}, the representation theorem for linear BSDEs ensures that the above equation with terminal condition $P_{\tau} = e^{\beta y_{0}}$ admits a unique solution $(P, Q) \in \Sr^{2}(\gamma, \tau) \times \Hr^{2}(\gamma, \tau)$, such that for all $t \geq 0$, 
\begin{eqnarray} \label{semiexplpreuve}
P_{t} = \e \SBRA{e^{\beta y_{0}} \Gamma_{t, \tau}(\lambda) | \F_{t}}, \quad \text{a.s},
\end{eqnarray}
where
\begin{eqnarray*}
d \Gamma_{t, s}(\lambda) &=& \Gamma_{t, s}(\lambda) \PAR{\beta (l(V_{s}) - \lambda)ds + a(V_{s})^{\top}dW_{s}} \\
\Gamma_{t, t}(\lambda) &=& 1.
\end{eqnarray*}
Equation \eqref{semiexplpreuve} at time zero gives $\e \SBRA{\Gamma_{0, \tau}(\lambda)} = 1.$ The map $\lambda \mapsto \e \SBRA{\Gamma_{0, \tau}(\lambda)}$ being strictly monotonic, the ergodic cost $\lambda$ is characterized as
\begin{eqnarray}
\lambda = \underset{\tilde\lambda \in \R}{\argmin} \ABS{\e \SBRA{\Gamma_{0, \tau}(\tilde\lambda)} - 1}.
\end{eqnarray}
\end{preuve}

By construction, the ergodic cost satisfies $\ABS{\lambda} \leq K$ with $K$ given in Assumption \ref{Fgrowthass} (see the proof of Proposition $3.1$ in \cite{liang2017representation}). Hence, the minimum in the above result must be attained in the interval $\SBRA{-K, K}$. Proposition \ref{thm:colehopf} can be then applied to obtain a representation for the ergodic cost $\lambda$ in the case of power forward utilities, with generator $F$ given by \eqref{driver_zar} in the absence of portfolio constraints.

\begin{coro} \label{linquadr}
The unique Markovian solution to the ergodic BSDE with random time horizon \eqref{rdtBSDE} and with generator $F$ given by
\begin{eqnarray}
F(V_{t}, Z_{t}) = \frac{1}{2} \frac{\delta}{1 - \delta} \NRM{Z_{t} + \theta(V_{t})}^{2} + \frac{1}{2} \NRM{Z_{t}}^{2},
\end{eqnarray}
is given, for all $t \geq 0$, by
\begin{eqnarray} \label{semiexplpow}
Y_{t} = y_{0} + (1 - \delta) \ln  \e \SBRA{ \Gamma_{t, \tau}(\lambda) | \F_{t}}, \quad \text{a.s.}
\end{eqnarray}
where $\Gamma_{t, \tau}$ is the value in $\tau$ of the unique solution of the forward equation
\begin{eqnarray} \label{Gammadyn}
d \Gamma_{t, s}(\lambda) &=& \Gamma_{t, s}(\lambda) \PAR{ \frac{1}{1 - \delta}\big(\frac{\delta}{2(1 - \delta)} \NRM{\theta(V_{s})}^{2} - \lambda\big)ds + \frac{\delta}{1 - \delta} \theta(V_{s})^{\top}dW_{s}} \\ 
\Gamma_{t, t}(\lambda) &=& 1 \nonumber.
\end{eqnarray}
Moreover, the ergodic cost $\lambda$ satisfies
\begin{eqnarray} \label{minlambda}
\lambda = \underset{\tilde\lambda \in \SBRA{-K, K}}{\argmin} \ABS{\e \SBRA{\Gamma_{0, \tau}(\tilde\lambda)} - 1}
\end{eqnarray}
\end{coro}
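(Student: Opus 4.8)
The plan is to recognize that Corollary \ref{linquadr} is a direct specialization of Proposition \ref{thm:colehopf}, so the proof amounts to matching the given power-utility generator to the abstract quadratic form \eqref{driverCH} and then reading off the conclusions. First I would rewrite the generator
\begin{eqnarray*}
F(V_t, Z_t) = \frac{1}{2} \frac{\delta}{1-\delta} \NRM{Z_t + \theta(V_t)}^2 + \frac{1}{2}\NRM{Z_t}^2
\end{eqnarray*}
by expanding the square $\NRM{Z_t + \theta(V_t)}^2 = \NRM{Z_t}^2 + 2\theta(V_t)^\top Z_t + \NRM{\theta(V_t)}^2$ and collecting terms, so as to identify the coefficients $l(v)$, $a(v)$, and $\beta$ of \eqref{driverCH}. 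The quadratic coefficient is $\beta = \frac{\delta}{1-\delta} + 1 = \frac{1}{1-\delta}$; the linear coefficient is $a(v) = \frac{\delta}{1-\delta}\theta(v)$; and the $z$-independent part is $l(v) = \frac{\delta}{2(1-\delta)}\NRM{\theta(v)}^2$.

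Once these identifications are made, I would verify the hypotheses of Proposition \ref{thm:colehopf}: since $\theta$ is bounded and Lipschitz under Assumption \ref{asslispch}, both $l$ and $a$ are bounded, so the proposition applies verbatim. Substituting the three coefficients into the abstract formulas gives the claimed results. In particular, the transformed solution $Y_t = \frac{1}{\beta}\ln \e[e^{\beta y_0}\Gamma_{t,\tau}(\lambda)\,|\,\F_t]$ becomes, using $\frac{1}{\beta} = 1-\delta$ and $e^{\beta y_0} = e^{y_0/(1-\delta)}$ which factors out of the expectation, the stated expression \eqref{semiexplpow}. The dynamics of $\Gamma$ in \eqref{Gammadyn} follow by inserting $\beta(l(v) - \lambda) = \frac{1}{1-\delta}\bigl(\frac{\delta}{2(1-\delta)}\NRM{\theta(v)}^2 - \lambda\bigr)$ and $a(v) = \frac{\delta}{1-\delta}\theta(v)$ into the abstract SDE, and the characterization \eqref{minlambda} of $\lambda$ is immediate from the corresponding line in Proposition \ref{thm:colehopf}, restricted to $[-K,K]$ using the a priori bound $\ABS{\lambda} \leq K$ noted just before the corollary.

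**The one point requiring genuine care** is the factoring of the constant $e^{\beta y_0}$ out of the logarithm and the resulting additive appearance of $y_0$ in \eqref{semiexplpow}: I would check that $\frac{1}{\beta}\ln\bigl(e^{\beta y_0}\e[\Gamma_{t,\tau}(\lambda)\,|\,\F_t]\bigr) = y_0 + (1-\delta)\ln\e[\Gamma_{t,\tau}(\lambda)\,|\,\F_t]$, which is exactly the claimed form. **I expect the main (minor) obstacle** to be purely bookkeeping in the algebra of the coefficients — ensuring the factor $\frac{1}{1-\delta}$ is correctly distributed in the drift of $\Gamma$ in \eqref{Gammadyn}, since the paper writes the drift as $\frac{1}{1-\delta}\bigl(\frac{\delta}{2(1-\delta)}\NRM{\theta(V_s)}^2 - \lambda\bigr)$ rather than in the raw $\beta(l-\lambda)$ form; I would confirm these agree. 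No deeper analytic difficulty arises, since existence, uniqueness, the semi-explicit representation, and the monotonicity argument characterizing $\lambda$ have all been established at the level of Proposition \ref{thm:colehopf}.
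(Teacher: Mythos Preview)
Your proposal is correct and matches the paper's intended approach: the corollary is stated without a separate proof precisely because it follows by specializing Proposition \ref{thm:colehopf} with the coefficient identifications $\beta = \frac{1}{1-\delta}$, $a(v) = \frac{\delta}{1-\delta}\theta(v)$, $l(v) = \frac{\delta}{2(1-\delta)}\NRM{\theta(v)}^2$, exactly as you outline. Your algebraic checks on the drift of $\Gamma$ and on factoring $e^{\beta y_0}$ out of the logarithm are the only bookkeeping needed.
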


\section{Approximation of the ergodic BSDE} \label{section:numerical}

In this section, we introduce the Euler approximation for the stochastic factor $V$, with the estimation of the horizon time $\tau$ using this continuous approximation. We then present a backward scheme for ergodic BSDE using the representation introduced in Section \ref{section:connection} and investigate the associated discrete-time approximation error.

\subsection{Euler scheme approximation of the stochastic factor}
First, the integral form of the stochastic factor process $\eqref{stochfact}$ gives
\begin{eqnarray} \label{intV}
V_{t} = v_{0} + \int_{0}^{t} \mu(V_{s})ds + \int_{0}^{t} \kappa dW_{s}, \quad t \geq 0.
\end{eqnarray}
Consider a discretization of $\R^{+}$ with constant time step $h$, generating a grid $\pi = \BRA{t_{0} = 0, t_{1}, ... \,  }$. Denoting $\Delta W_{i} = W_{t_{i+1}} - W_{t_{i}}$ the Brownian increments between times $t_{i}$ and $t_{i+1}$, the Euler discretization of $V$ on the time grid $\pi$ is given, for all $i \geq 0$,
\begin{eqnarray}\label{discV}
\overline{V}_{t_{i+1}} &=& \overline{V}_{t_{i}} + \mu(\overline{V}_{t_{i}}) h + \kappa \Delta W_{i}, \\
\overline{V}_{0} &=& v_0. \nonumber
\end{eqnarray}
In the sequel, we will consider the continuous Euler scheme associated to \eqref{discV} on grid $\pi$, defined by
\begin{eqnarray} \label{eulerV}
\overline{V_{t}} = v_{0} + \int_{0}^{t} \mu(\overline{V}_{s^{-}})ds + \int_{0}^{t} \kappa dW_{s}, \quad t \geq 0, 
\end{eqnarray}
where $s^{-} = \max \BRA{t_{i} \in \pi, \, t_{i} \leq s}$.\\ 
We define the approximation of the stopping time $\tau$ as follows
\begin{eqnarray} \label{bartau}
\overline{\tau} = \inf \BRA{t \geq T, \, \overline{V}_{t} = v_{0}}.
\end{eqnarray}
This stopping time is well defined since the Euler discretization $(\overline{V_{t}})_{t \geq 0}$ is also ergodic, (see \cite{talay1990second}, \cite{mattingly2002ergodicity}). The horizon $\bar{\tau}$ defined by \eqref{bartau} we consider can be written as an exit time of the diffusion of a smooth domain. In fact,
\begin{eqnarray}
\bar{\tau} = \inf \BRA{t \geq T, \, \overline{V}_{t} \notin \left]-\infty, v_{0} \right[} \ind_{V_{T} < v_{0}} + \inf \BRA{t \geq T, \, \overline{V}_{t} \notin \left]v_{0}, +\infty \right[} \ind_{V_{T} > v_{0}}, \quad \p\, \text{a.s}.
\end{eqnarray}
Results from \cite{geiss2017} and \cite{matoussi2016numerical} related to the estimation of $\tau$ with an Euler scheme can be applied in our setting. We recall the following result from Theorem 3.9 \cite{geiss2017}.

\begin{prop} \label{L1errtau}
Let Assumption \ref{weakdissass} hold and assume that there exists $ q\in[4,\infty[$ such that under the notation of Lemma \ref{lem:expint}, $\frac{q}{q-1} 6 C_{\mu} < \frac{1}{4 B^{+}} \wedge \frac{1}{4 B^{-}}$. Then, there exists a constant $C > 0$ such that the error in $L^{1}$ of the approximation of the return time $\tau$ with the continuous Euler scheme satisfies
\begin{eqnarray}\label{esttau}
\e \SBRA{\ABS{\tau - \bar{\tau}}} \leq C h^{1/2}.
\end{eqnarray}
\end{prop}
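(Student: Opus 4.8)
The plan is to reduce the statement to a direct application of Theorem 3.9 in \cite{geiss2017}, which yields an $L^{1}$ convergence rate of order $h^{1/2}$ for the Euler approximation of the first exit time of a uniformly elliptic diffusion from a smooth domain, provided the exit times enjoy sufficient integrability uniformly in the mesh. The substance of the proof is therefore the verification that $\tau$ and $\bar{\tau}$ fit this framework, so that the cited theorem can be invoked verbatim.

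First I would recast $\tau$ and $\bar{\tau}$ as exit times from a smooth one-dimensional domain, exactly as observed just above the statement: on the event $\BRA{V_{T} < v_{0}}$ both are exit times of $V$ (resp. $\overline{V}$) from the half-line $\left]-\infty, v_{0}\right[$, and symmetrically on $\BRA{V_{T} > v_{0}}$ from $\left]v_{0}, +\infty\right[$. In dimension one the boundary reduces to the single point $v_{0}$, which is trivially of class $C^{\infty}$, and the non-characteristic (transversality) condition holds automatically because the volatility $\kappa$ is constant with $\kappa \kappa^{\top}$ positive definite under Assumption \ref{weakdissass}; this constant, non-degenerate diffusion coefficient is precisely the uniform ellipticity required in \cite{geiss2017}. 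Together with the regularity of the drift $\mu$ assumed in our framework (Lipschitz continuity, compatible with the dissipativity condition \eqref{disscond}), the coefficients meet the smoothness hypotheses of the cited theorem, and the strong order $1/2$ of the continuous Euler scheme \eqref{eulerV} is available.

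The heart of the argument, and its main obstacle, is that the exit times here are not almost surely bounded, whereas the $h^{1/2}$ rate in \cite{geiss2017} requires control of moments of $\tau$ and $\bar{\tau}$ of a sufficiently high order, uniformly in $h$. This is the role of the quantitative hypothesis $\frac{q}{q-1} 6 C_{\mu} < \frac{1}{4 B^{+}} \wedge \frac{1}{4 B^{-}}$: combined with the exponential integrability of $\tau$ established in Lemma \ref{lem:expint}, it guarantees $\e \SBRA{\tau^{q}} < \infty$, and, since the Euler discretization $\overline{V}$ inherits the dissipativity and hence remains exponentially ergodic with the same asymptotic constants (see \cite{talay1990second}, \cite{mattingly2002ergodicity}), it also yields $\sup_{h} \e \SBRA{\overline{\tau}^{q}} < \infty$. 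The conjugate exponent $q/(q-1)$ appearing in the condition reflects the Hölder splitting used in \cite{geiss2017} to separate the local exit-time error, which carries the $h^{1/2}$ rate, from the tail contribution of late exits, which is absorbed by the uniform moment bound of order $q$.

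With the domain reformulation performed, the uniform ellipticity, boundary smoothness and transversality verified, and the uniform-in-$h$ moment bounds of order $q$ for $\tau$ and $\overline{\tau}$ in hand, all the hypotheses of Theorem 3.9 in \cite{geiss2017} are satisfied. Its conclusion then provides directly a constant $C > 0$, independent of $h$, such that $\e \SBRA{\ABS{\tau - \overline{\tau}}} \leq C h^{1/2}$, which is the desired estimate \eqref{esttau}.
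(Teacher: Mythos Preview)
Your proposal is correct and follows essentially the same route as the paper: verify that the setup fits the framework of Theorem~3.9 in \cite{geiss2017} (half-line domain, uniform ellipticity from the constant non-degenerate $\kappa$), check the needed integrability of $\tau$ via Lemma~\ref{lem:expint}, and then invoke that theorem directly. The paper's proof is slightly terser and, for the integrability input, derives the exponential tail bound $\p(\tau \geq k) \leq \e[e^{\beta \tau}]\, e^{-\beta k}$ via Markov's inequality with a $\beta$ chosen in the open interval $\big(\tfrac{q}{q-1}\, 6 C_{\mu},\, \tfrac{1}{4B^{+}} \wedge \tfrac{1}{4B^{-}}\big)$, whereas you phrase the same information as uniform $L^{q}$ moments of $\tau$ and $\bar{\tau}$; these are equivalent consequences of the exponential moment and both feed the same hypothesis of the cited theorem.
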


\begin{preuve}
Let $\beta \in \R$ be such that $\frac{q}{q-1} 6 C_{\mu} < \beta < \frac{1}{4 B^{+}} \wedge \frac{1}{4 B^{-}}$. Under this assumption, Lemma \ref{lem:expint} yields that the random time $\tau$ admits exponential moment of order $\beta$. An application of Markov's inequality then gives, that, for any $k \in \n$;
\begin{eqnarray}
 \p (\tau \geq k) \leq \e \SBRA{e^{\beta \tau}} e^{- \beta k}.   
\end{eqnarray}
Applying Theorem $3.9$ from \cite{geiss2017} we conclude.
\end{preuve}

\subsection{Discrete-time approximation error} \label{discrerrsect}

Next, we recall the usual time discretization of BSDEs, applied to the ergodic BSDE with random time horizon \eqref{rdtBSDE}. Consider the time discretization $\pi = \BRA{ 0 = t_{0}, t_{1}, ...}$ of $\R^{+}$, with time step $h$ and the forward Euler scheme $\overline{V}$ for the stochastic factor $V$ defined by \eqref{eulerV}. Let $\bar{\lambda}$ denote an approximation of the ergodic cost $\lambda$. It can be estimated either by a Monte Carlo approximation or as a trainable parameter of the deep learning algorithm developed in the next section. Then starting from $\overline{Y}_{\bar{\tau}} = y_{0}$, we define the discrete time process $(\overline{Y}, \overline{Z})$ on $\pi$, for $i < \frac{\bar{\tau}}{h}$,
\begin{eqnarray}
\overline{Z}_{t_{i}} &=& \frac{1}{h} \e \SBRA{\overline{Y}_{t_{i+1}} \Delta W_{t_{i}} | \F_{t_{i}}} \label{discrZ} \\
\overline{Y}_{t_{i}} &=& \e \SBRA{\overline{Y}_{t_{i+1}} | \F_{t_{i}}} + \ind_{t_i\leq \bar{\tau}}h \SBRA{F(\overline{V}_{t_{i}}, \overline{Z}_{t_{i}}) - \overline{\lambda}}, \label{discrY}
\end{eqnarray}
One may check that under Assumption \ref{weakdissass}, for all $i$, $(\overline{Y}_{t_{i}}, \overline{Z}_{t_{i}}) \in L^{2}$. Moreover, as the process $Z$ is bounded, we may equivalently work with a Lipschitz driver $F \circ \varphi_{Z_{\max}}$, where $\varphi_{Z_{\max}}$ is the projection on the centered ball of $\R^{d}$ of radius $Z_{max}.$ As mentioned in \cite{bender2012least} and \cite{chassagneux2016numerical}, this may lead to numerical difficulties if this bound $Z_{\max}$ is too large. 

\vspace{0.3cm}
For what follows, it will be convenient to work with a continuous extension of $\overline{Y}$ in $\Sr^{2}$. This is possible since from the martingale representation theorem, there exists a process $\tilde{Z} \in \Hr^{2}$ such that
\begin{eqnarray} \label{zmrt}
    \overline{Y}_{t_{i+1}} - \e \SBRA{\overline{Y}_{t_{i+1}} | \F_{t_{i}}} = \int_{t_{i}}^{t_{i+1}} \tilde{Z}_{s}^{\top}dW_s,
\end{eqnarray}
which allows to consider the continuous extension of $(Y_{t_{i}})_{i < \frac{\bar{\tau}}{h}}$ on $[0, \bar{\tau}]$
\begin{eqnarray}
\overline{Y}_{t} = \overline{Y}_{\bar{\tau}} + \int_{t}^{\bar{\tau}} F(\overline{V}_{s^{-}}, \overline{Z}_{s^{-}})ds - (\bar{\tau} -t) \overline{\lambda} - \int_{t}^{\bar{\tau}} \tilde{Z}_{s}^{\top} dW_{s}.
\end{eqnarray}
Finally, we will also consider the approximation in $L^{2}$ of $Z$ by a process constant on each time interval $[t_{i}, t_{i+1}]$, defined by
\begin{eqnarray}
\hat{Z}_{t_{i}} = \frac{1}{h} \e \SBRA{\int_{t_{i}}^{t_{i+1}} Z_{s}ds | \F_{t_{i}}}.
\end{eqnarray}

\begin{rmq}
From Ito's isometry and \eqref{zmrt} we have, for all $i \geq 0$
    \begin{eqnarray}
        \overline Z_{t_{i}} = \frac{1}{h} \e \SBRA{\int_{t_{i}}^{t_{i+1}} \tilde{Z}_{s}ds | \F_{t_{i}}}.
    \end{eqnarray}
\end{rmq}

Now, we next provide a bound for the square of the discrete time approximation error, up to a stopping time $\theta$
\begin{eqnarray} \label{deferrconv}
\Err(h)_{\theta}^{2} = \underset{i}{\max}\, \e \SBRA{\underset{t \in \SBRA{t_{i}, t_{i+1}}}{\sup} \ind_{t \leq \theta} \ABS{Y_{t} - \overline{Y}_{t_{i}}}^{2}} + \e \SBRA{\int_{0}^{\theta} \NRM{Z_{t} - \overline{Z}_{t^{-}}}^{2}dt}, 
\end{eqnarray}
where $t^{-} = \sup \BRA{s \in \pi, s \leq t}.$ We will control this error through this error quantity
\begin{eqnarray}
     \Rr(Z)_{\Hr^{2}}^{\pi} = \e \SBRA{\int_{0}^{\tau} \NRM{Z_{t} - \hat{Z}_{t^{-}}}^{2}dt}.
\end{eqnarray}
From now on, $C$ denotes a generic constant, which depends on $V_0$, $C_v$ and $C_z$.
\begin{rmq} \label{inegconvZ}
\begin{enumerate}
\item Let $\theta$ be an $\F_{t}$ stopping time in grid $\pi$. We may control the error term\newline $\e \SBRA{\int_{0}^{\theta} \NRM{Z_{s} - \tilde{Z_{s}}}^{2}ds}$ which will provide the desired bound on $\Err(h)$. In fact, using Jensen's inequality, one can show (see \cite{bouchard2009strong}) that for any stopping time $\theta$ in the time grid $\pi$,
\begin{eqnarray}
\e \SBRA{\int_{0}^{\theta} \NRM{Z_{s} - \overline{Z}_{s^{-}}}^{2}ds} \leq C \PAR{\e \SBRA{\int_{0}^{\theta} \NRM{Z_{s} - \tilde{Z}_{s}}^{2}ds} + \e \SBRA{\int_{0}^{\theta} \NRM{Z_{s} - \hat{Z}_{s^{-}}}^{2}ds}}.
\end{eqnarray}
\item We adapt the proof of Theorem 3.2 in \cite{bouchard2009discrete} to the case of unbounded time horizon to show that 
\begin{eqnarray}\label{boundZ}
    \Rr(Z)_{\Hr^{2}}^{\pi}\leq C h.
\end{eqnarray}

In the first instance, we can show inequality \eqref{boundZ} for any bounded stopping time $\tau\wedge T$. Then, using the fact that Z is bounded and the convergence theorem, we can obtain the result for any stopping time $\tau$.
\end{enumerate}
\end{rmq}
Controlling the error \eqref{deferrconv} implies that we control the error in $\Sr^{2} \times \Hr^{2}$ of the discrete time approximation $(\overline{Y}_{t_{i}}, \overline{Z}_{t_{i}})$ for all $i$. We provide a bound for the discretization error of a backward scheme for ergodic BSDE in terms of the quantities $\Rr(Z)_{\Hr^{2}}^{\pi}$, $\ABS{\lambda - \bar{\lambda}}$ and $\ABS{\tau - \bar{\tau}}$. We refer to \cite{bouchard2009strong}, whose results concern the approximation of classical BSDE with random terminal time.

\begin{prop}
Let \eqref{lemma:Kz} and Assumptions \ref{weakdissass}, \ref{Fgrowthass} hold. Also assume that the condition of Proposition \ref{L1errtau} is satisfied. Then, there exists a constant $C > 0$ such that
\begin{eqnarray} \label{errT}
\Err(h)_{\tau \vee \bar{\tau}}^{2} \leq\Err(h)_{\tau^{+} \vee \bar{\tau}^{+}}^{2} \leq C \left(h^{1/2}  + \ABS{\lambda - \bar{\lambda}}^{2}  \right),
\end{eqnarray}
where $\tau^+$ is the next time in the grid $\pi$ after $\tau$, $\tau^+:=\inf\{t\in\pi : \tau\leq t\}.$
\end{prop}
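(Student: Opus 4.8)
The first inequality in \eqref{errT} is purely monotonicity. Since $\tau \leq \tau^{+}$ and $\bar{\tau} \leq \bar{\tau}^{+}$ pointwise, we have $\tau \vee \bar{\tau} \leq \tau^{+} \vee \bar{\tau}^{+}$, and both contributions to $\Err(h)_{\theta}^{2}$ in \eqref{deferrconv} are nondecreasing in $\theta$: enlarging $\theta$ enlarges the set $\BRA{t \leq \theta}$ inside the indicator, hence each supremum and their maximum, and it enlarges the integration interval $\SBRA{0, \theta}$. For the substantive inequality, I would work with a weighted continuous-time error on $\SBRA{0, \tau^{+} \vee \bar{\tau}^{+}}$. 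Writing $\delta Y_{t} := Y_{t} - \overline{Y}_{t}$ and $\delta Z_{t} := Z_{t} - \tilde{Z}_{t}$ for the continuous extensions (with $\tilde{Z}$ as in \eqref{zmrt}) and $\delta \lambda := \lambda - \bar{\lambda}$, both $Y$ and $\overline{Y}$ solve BSDEs, but with the two distinct terminal times $\tau$ and $\bar{\tau}$ and drivers evaluated at $(V_{s}, Z_{s})$ versus $(\overline{V}_{s^{-}}, \overline{Z}_{s^{-}})$. As $Z$ is bounded by $Z_{\max}$, I may substitute the truncated driver $F \circ \varphi_{Z_{\max}}$, which by \eqref{Flipschv}--\eqref{Flipschz} is Lipschitz in both arguments with constants $C_{v}(1 + Z_{\max})$ and $K_{z} = C_{z}(1 + 2 Z_{\max})$, without altering the solution.

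The plan is then to apply It\^o's formula to $e^{\gamma t}\ABS{\delta Y_{t}}^{2}$ for a weight $\gamma > K_{z}^{2}$ as in Theorem \ref{thm:BSDERandomtime}, stopped at $\tau^{+}\vee\bar{\tau}^{+}$, and take expectations; the weighting is what guarantees integrability over the random horizon (recall $\tau$ has a finite exponential moment by Lemma \ref{lem:expint}). The $\int e^{\gamma s}\NRM{\delta Z_{s}}^{2}ds$ term lands on the good side, and the driver increment splits into three pieces. First, the factor error $F(V_{s}, Z_{s}) - F(\overline{V}_{s^{-}}, Z_{s})$ is bounded via \eqref{Ftroncv} by $C_{v}(1+Z_{\max})\NRM{V_{s} - \overline{V}_{s^{-}}}$, whose contribution is of order $h$ by the strong Euler estimate $\sup_{t}\e\SBRA{\NRM{V_{t} - \overline{V}_{t^{-}}}^{2}} \leq C h$. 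Second, the $z$-regularity error $F(\overline{V}_{s^{-}}, Z_{s}) - F(\overline{V}_{s^{-}}, \overline{Z}_{s^{-}})$ is handled through the Lipschitz constant $K_{z}$: the resulting cross term $2\, \delta Y_{s} \cdot K_{z}\NRM{\delta Z_{s}}$ is split by Young's inequality so that a fraction of $\NRM{\delta Z_{s}}^{2}$ is absorbed into the good term and the residual $\ABS{\delta Y_{s}}^{2}$ is deferred to Gronwall, while the gap between $\tilde{Z}$ and the piecewise-constant $\overline{Z}_{\cdot^{-}}$ is controlled by $\Rr(Z)_{\Hr^{2}}^{\pi} \leq C h$ from Remark \ref{inegconvZ}. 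Third, the ergodic-cost error contributes $\int_{0}^{\tau^{+}\vee\bar{\tau}^{+}} \ABS{\delta\lambda}\,ds$, which after squaring and using the finite second moment of $\tau$ (hence of $\bar{\tau}$) is bounded by $C\ABS{\lambda - \bar{\lambda}}^{2}$. Because $F$ does not depend on $y$, no $\delta Y$-Lipschitz term arises, so unlike \cite{bouchard2009strong} no $\Rr(Y)_{\Sr^{2}}^{\pi}$ term enters.

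The \emph{main obstacle} is the mismatch of the two terminal times. On the stochastic interval between $\tau \wedge \bar{\tau}$ and $\tau \vee \bar{\tau}$ only one equation has reached its terminal value $y_{0}$, so $\delta Y$ need not vanish at a common time and the boundary term $\e\SBRA{e^{\gamma(\tau^{+}\vee\bar{\tau}^{+})}\ABS{\delta Y_{\tau^{+}\vee\bar{\tau}^{+}}}^{2}}$ is not zero. I would bound $\ABS{\delta Y}$ on this interval through its driver and martingale parts and exploit the boundedness of $Y$, $\overline{Y}$, $Z$, $\overline{Z}$ and $F\circ\varphi_{Z_{\max}}$, so that the contribution is dominated by $\e\SBRA{\ABS{\tau - \bar{\tau}}}$ together with the grid-rounding terms $\ABS{\tau^{+} - \tau}, \ABS{\bar{\tau}^{+} - \bar{\tau}} \leq h$. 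By Proposition \ref{L1errtau}, $\e\SBRA{\ABS{\tau - \bar{\tau}}} \leq C h^{1/2}$, and this is precisely the term that fixes the global rate $h^{1/2}$, the Euler and path-regularity errors being of order $h$. Collecting the three driver pieces, the terminal-time piece, and the $\lambda$-piece into the It\^o identity and applying Gronwall's lemma to $t \mapsto \e\SBRA{e^{\gamma t}\ABS{\delta Y_{t \wedge (\tau^{+}\vee\bar{\tau}^{+})}}^{2}}$ yields $\Err(h)_{\tau^{+}\vee\bar{\tau}^{+}}^{2} \leq C\PAR{h^{1/2} + \ABS{\lambda - \bar{\lambda}}^{2}}$; the $Z$-part of $\Err$ is recovered from $\e\int_{0}^{\theta}\NRM{Z_{t} - \tilde{Z}_{t}}^{2}dt$ via the triangle inequality and the comparison of $\overline{Z}$, $\tilde{Z}$, $\hat{Z}$ in Remark \ref{inegconvZ}.
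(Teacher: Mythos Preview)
Your overall architecture---It\^o on $\ABS{\delta Y}^{2}$, Lipschitz splitting of the driver, Young plus Gronwall, and recovery of the $Z$-error via Remark \ref{inegconvZ}---is essentially that of the paper, which instead works stepwise on each $[t_{i}\wedge\theta,\,t_{i+1}\wedge\theta]$ and then sums over $i$, rather than applying a weighted It\^o formula on the whole interval. The exponential weight $e^{\gamma t}$ is not needed here since $Z$ is bounded and $\tau,\bar\tau$ have finite moments; the paper proceeds without it.

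The one genuine misunderstanding is your claim that the boundary term $\e\SBRA{e^{\gamma(\tau^{+}\vee\bar{\tau}^{+})}\ABS{\delta Y_{\tau^{+}\vee\bar{\tau}^{+}}}^{2}}$ is nonzero. In fact it \emph{vanishes}: since $\tau^{+}\vee\bar{\tau}^{+}\geq\tau$ and $\tau^{+}\vee\bar{\tau}^{+}\geq\bar{\tau}$, both processes have already been frozen at their common terminal value $y_{0}$, so $Y_{\tau^{+}\vee\bar{\tau}^{+}}=\overline{Y}_{\tau^{+}\vee\bar{\tau}^{+}}=y_{0}$. This is precisely why one chooses $\theta=\tau^{+}\vee\bar{\tau}^{+}$ rather than, say, $\tau^{+}\wedge\bar{\tau}^{+}$. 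Consequently there is no need to invoke ``boundedness of $Y,\overline{Y}$'' (which is not available anyway: $y$ is only sublinear). The term $\e\SBRA{\ABS{\tau-\bar\tau}}$, and with it the $h^{1/2}$ rate, arises not from a boundary contribution but from the \emph{driver mismatch} on $[\tau\wedge\bar\tau,\,\tau\vee\bar\tau]$: there the factor $\ind_{s\leq\tau}-\ind_{s\leq\bar\tau}$ is nonzero, and the surviving integrand $(Y_{s}-\overline{Y}_{s})(\ind_{s\leq\tau}-\ind_{s\leq\bar\tau})(F(V_{s},Z_{s})-\lambda)$, after Young's inequality and the boundedness of $F\circ\varphi_{Z_{\max}}$, $Z$ and $\lambda$, produces $C\,\e\SBRA{\int \ind_{\tau\wedge\bar\tau\leq s\leq\tau\vee\bar\tau}\,ds}=C\,\e\SBRA{\ABS{\tau-\bar\tau}}$. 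Once you replace your boundary analysis with this driver-mismatch term, your argument lines up with the paper's.
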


In addition to the usual spatial error for discretization schemes for finite horizon BSDE, we get here a term related to the estimation of the return time $\tau$ with the Euler approximation $\overline{V}$ as well as an error term related to the estimation of the ergodic constant $\lambda$. 

\begin{proof}
We follow the arguments described in \cite{bouchard2009strong}, and add the error term specific to our ergodic equation $\ABS{\lambda - \bar{\lambda}}$. Let $\theta$ be an $\F_{t}$ stopping time in the grid $\pi$. Applying Ito's lemma to $(Y - \overline{Y})^{2}$, between $t \wedge \theta$ and $t_{i+1} \wedge \theta$, for any time $t \in \SBRA{t_{i}, t_{i+1}}$
\begin{eqnarray}
\Delta_{t,t_{i+1}}^{\theta}&:= &\e \SBRA{ \ABS{Y_{t \wedge \theta} - \overline{Y}_{t \wedge \theta}}^{2} + \int_{t \wedge \theta}^{t_{i+1} \wedge \theta} \NRM{Z_{s} - \tilde{Z_{s}}}^{2}ds} \nonumber\\
    &= &\e \SBRA{\ABS{Y_{t_{i+1} \wedge \theta} - \overline{Y}_{t_{i+1}\!\wedge \theta}}^{2}} + \e \SBRA{2 \int_{t \wedge \theta}^{t_{i+1} \wedge \theta}\!\! \!(Y_{s} - \overline{Y}_{s})(\ind_{s < \tau}(F(V_{s}, Z_{s}) - \lambda) - \ind_{s < \bar{\tau}} (F(\overline{V}_{s^{-}}, \overline{Z}_{s^{-}}) + \bar{\lambda}))ds} \nonumber \\
    &= &\e \SBRA{\ABS{Y_{t_{i+1} \wedge \theta} - \overline{Y}_{t_{i+1} \wedge \theta}}^{2}} + \e \left[ 2 \int_{t \wedge \theta}^{t_{i+1} \wedge \theta} (Y_{s} - \overline{Y}_{s}) \ind_{s \leq \bar{\tau}} (F(V_{s}, Z_{s}) - \lambda - F(\bar{V}_{s^{-}}, \bar{Z}_{s^{-}}) + \bar{\lambda})ds \right] \nonumber\\
    &&  \qquad  + \e \left[ 2 \int_{t \wedge \theta}^{t_{i+1} \wedge \theta} (Y_{s} - \overline{Y}_{s}) (\ind_{s \leq \tau} - \ind_{s \leq \bar{\tau}})(F(V_{s}, Z_{s}) - \lambda) ds \right].
\end{eqnarray}
Using the inequality $2ab \leq \alpha a^{2} + \frac{1}{\alpha}b^{2}$, for a suitable $\alpha > 0$ to be chosen later, we obtain
\begin{eqnarray*}
\Delta_{t,t_{i+1}}^{\theta} &\leq& \e \SBRA{\ABS{Y_{t_{i+1} \wedge \theta} - \overline{Y}_{t_{i+1} \wedge \theta}}^{2}} + \alpha \e \SBRA{\int_{t \wedge \theta}^{t_{i+1} \wedge \theta} \ABS{Y_{s} - \overline{Y}_{s}}^{2}ds} \\
    && + \frac{2}{\alpha} \e \SBRA{\int_{t \wedge \theta}^{t_{i+1} \wedge \theta} \ind_{s < \bar{\tau}} (F(V_{s}, Z_{s}) - F(\overline{V}_{s^{-}}, \overline{Z}_{s^{-}}))^{2}ds} + \frac{2}{\alpha}\e \SBRA{\int_{t \wedge \theta}^{t_{i+1} \wedge \theta} \ind_{s < \bar{\tau}}\ABS{\lambda - \bar{\lambda}}^{2}ds} \\
    && + \frac{2}{\alpha} \e \SBRA{\int_{t \wedge \theta}^{t_{i+1} \wedge \theta} \ind_{\tau \leq s < \bar{\tau}} (F(V_{s}, Z_{s}) - \lambda)^{2}ds + \int_{t \wedge \theta}^{t_{i+1} \wedge \theta} \ind_{\bar{\tau} \leq s < \tau} (F(V_{s}, Z_{s}) - \lambda)^{2}ds}.
\end{eqnarray*}
On the event $\BRA{s > \tau}$, we have $Y_{s} = Y_{\tau}$ so that $Z_{s} = 0$. Then, using the Lipschitz properties of the driver $F$ \eqref{Flipschv}, \eqref{Flipschz}, the boundedness of $Z$, Remark \ref{inegconvZ} and results on the Euler approximation of $V$ we obtain,
\begin{eqnarray*}
\Delta_{t,t_{i+1}}^{\theta} &\leq& \e \SBRA{\ABS{Y_{t_{i+1} \wedge \theta} - \overline{Y}_{t_{i+1} \wedge \theta}}^{2}} + \alpha \e \SBRA{\int_{t \wedge \theta}^{t_{i+1} \wedge \theta} \ABS{Y_{s} - \overline{Y}_{s}}^{2}ds} \\
    && + \frac{C}{\alpha} \e \SBRA{\int_{t \wedge \theta}^{t_{i+1} \wedge \theta\wedge \bar{\tau}} \PAR{h + \NRM{Z_{s} - \hat{Z}_{s^{-}}}^{2} + \NRM{{Z}_{s} - \tilde{Z}_{s}}^{2}}ds} \\
    && + \frac{C}{\alpha} \e \SBRA{  \int_{t \wedge \theta}^{t_{i+1} \wedge \theta}  \ind_{\tau  \leq s \leq \bar{\tau}}K^{2} + \ind_{\bar{\tau} \leq s \leq \tau} Z_{\max}^{2} + \ind_{\tau \wedge \bar{\tau} \leq s \leq \tau \vee \bar{\tau}} \lambda^{2} ds} \\
    && +\frac{2}{\alpha}\e \SBRA{\int_{t \wedge \theta}^{t_{i+1} \wedge \theta\wedge \bar{\tau}} \ABS{\lambda - \bar{\lambda}}^{2}ds} .
\end{eqnarray*}
 In turn, Gronwall's lemma gives
\begin{eqnarray} \label{transinegy}
\e \SBRA{\ABS{Y_{t \wedge \theta} - \overline Y_{t \wedge \theta}}^{2}} &\leq& \Delta_{t,t_{i+1}}^{\theta}
\nonumber\\
&\leq& (1 + C_{\alpha} h) \e \SBRA{\ABS{Y_{t_{i+1} \wedge \theta} - \overline{Y}_{t_{i+1} \wedge \theta}}^{2}} \nonumber \\
    && + \PAR{C_{\alpha} h + \frac{C}{\alpha}} \e \SBRA{\int_{t \wedge \theta}^{t_{i+1} \wedge \theta} \PAR{h + \NRM{Z_{s} - \hat{Z}_{s^{-}}}^{2} + \NRM{Z_{s} - \tilde{Z}_{s}}^{2}}ds} \\
    && + \PAR{C_{\alpha} h + \frac{C}{\alpha}} \e \SBRA{  \int_{t \wedge \theta}^{t_{i+1} \wedge \theta} \ind_{\tau \wedge \bar{\tau} \leq s \leq \tau \vee \bar{\tau}} \max(K^{2}, Z_{\max}^{2}, \lambda^{2}) ds} \nonumber \\
    && + \PAR{C_{\alpha} h + \frac{2}{ \alpha}} \e \SBRA{\int_{t \wedge \theta}^{t_{i+1} \wedge \theta\wedge \bar{\tau}} \ABS{\lambda - \bar{\lambda}}^{2}ds}.  
    \nonumber
\end{eqnarray}
Recall that the ergodic constant $\lambda$ is bounded, and, thus, the maximum in the third line above is finite. Substituting $t=t_{i}$ in \eqref{transinegy}, taking $\alpha > 0$ sufficiently large and the time step $h$ small enough, we obtain
\begin{eqnarray*}
 \e \SBRA{ \ABS{Y_{t_{i} \wedge \theta} - \overline{Y}_{t_{i} \wedge \theta}}^{2}} &+& \e \SBRA{\int_{t_i \wedge \theta}^{t_{i+1} \wedge \theta}\!\!  \NRM{Z_{s} - \tilde{Z_{s}}}^{2}ds} \leq (1 + C h) \e \SBRA{\ABS{Y_{t_{i+1} \wedge \theta} - \overline{Y}_{t_{i+1} \wedge \theta}}^{2}} \nonumber\\
 &+& C \e \SBRA{\int_{t_i \wedge \theta}^{t_{i+1} \wedge \theta} \!\!\PAR{h + \NRM{Z_{s} - \hat{Z}_{s^{-}}}^{2}+ \NRM{Z_{s} - \tilde{Z}_{s}}^{2}}ds} \\
    & +& C \e \SBRA{  \int_{t_i \wedge \theta}^{t_{i+1} \wedge \theta} \!\!\ind_{\tau \wedge \bar{\tau} \leq s \leq \tau \vee \bar{\tau}} ds}  + C \e \SBRA{\int_{t_i \wedge \theta}^{t_{i+1} \wedge \theta\wedge \bar{\tau}} \ABS{\lambda - \bar{\lambda}}^{2}ds}   .
\end{eqnarray*}
Summing on $i$  we have
\begin{eqnarray} \label{finalineqErr}
\underset{i}{\max}\, \e \SBRA{ \ABS{Y_{t_{i} \wedge \theta} - \overline{Y}_{t_{i} \wedge \theta}}^{2}} + \e \SBRA{\int_{0}^{\theta} \NRM{Z_{s} - \tilde{Z_{s}}}^{2}ds} \!\!&&\leq C \left( \e \SBRA{\ABS{Y_{\theta} - \overline{Y}_{\theta}}^{2}} + h\e\SBRA{\theta}  + \Rr(Z)_{\Hr^{2}}^{\pi} \right. \nonumber \\
&& \left. + \e \SBRA{\ABS{\theta\wedge(\bar{\tau} \vee \tau) - \tau \wedge \bar\tau}} + \ABS{\lambda - \bar{\lambda}}^{2} \e \SBRA{\theta\wedge\bar\tau} \right).\nonumber\\
&&
\end{eqnarray}
Then, it follows, again by using Remark \ref{inegconvZ} that
\begin{eqnarray}
\Err(h)_{\theta}^{2}&&\leq C \left( \e \SBRA{\ABS{Y_{\theta} - \overline{Y}_{\theta}}^{2}} + h\e\SBRA{\theta}  + \Rr(Z)_{\Hr^{2}}^{\pi}+ \e \SBRA{\ABS{\theta\wedge(\bar{\tau} \vee \tau) - \tau \wedge \bar\tau}}  + \ABS{\lambda - \bar{\lambda}}^{2} \e \SBRA{\theta\wedge\bar\tau} \right).\nonumber\\
&&
\end{eqnarray}
For the stopping time $\theta = \tau^{+} \vee \bar{\tau}^{+}$ in the time grid $\pi$, one observes that $Y_{\tau^{+} \vee \bar{\tau}^{+}} - \overline{Y}_{\tau^{+} \vee \bar{\tau}^{+}} = 0$, and we obtain that
\begin{eqnarray*}
\Err(h)_{\tau^{+} \vee \bar{\tau}^{+}}^{2} \leq  C \left( h\e\SBRA{\tau^{+} \vee \bar{\tau}^{+}}  + \Rr(Z)_{\Hr^{2}}^{\pi}+ \e \SBRA{\ABS{ \tau -  \bar\tau}}  + \ABS{\lambda - \bar{\lambda}}^{2} \e \SBRA{\bar\tau} \right).
\end{eqnarray*}
Finally, combining the estimates \eqref{boundZ}, \eqref{esttau} and using the inequalities $\tau^+\leq h+\tau$, $\bar{\tau}^+\leq \bar \tau+h$, we conclude.
\end{proof}

\section{Deep learning algorithms for the simulation of ergodic BSDEs and forward utilities}  \label{section:simulation}

In this section, we introduce new deep learning algorithms for the simulation of ergodic BSDEs, based on the representation \eqref{rdtBSDE} of Markovian solutions using BSDEs with random time horizon. The first neural network based algorithm for solving BSDEs was initially proposed in \cite{han2017deep}. Since then, there has been a growing interest in developing deep learning algorithms solving BSDEs with finite horizons. We introduce here two algorithms solving the ergodic BSDE \eqref{ebsdeInitCond}, which can be seen as the ergodic counterpart of the neural networks algorithms introduced in \cite{han2017deep} and \cite{kapllani2020deep}.
The algorithms introduced below approximate the ``Markovian'' solution $(y(V_t) + y_0 - y(V_0), z(V_t), \lambda)_{t\geq 0 }$ to the ergodic BSDE with random time horizon \eqref{rdtBSDE},
\begin{eqnarray}\label{FBSDE}
dV_{t} &=& \mu(V_{t}) + \kappa dW_{t}, \quad V_{0} = v_{0}, \nonumber \\
Y_{t} &=& Y_{\tau} + \int_{t}^{\tau} F(V_{s}, Z_{s})ds - \lambda (\tau- t) - \int_{t}^{\tau} Z_{s}^{\top}dW_{s}, \\
Y_{\tau} &=& Y_0 =  y_{0}, \nonumber
\end{eqnarray}
We can remark from the system \eqref{FBSDE} that
\begin{enumerate}
\item The initial value $Y_{0}$ is a known quantity, equal to $y(v_{0}) = y_{0}$. This allows us to use a forward scheme starting from $y_{0}$. 
\item The recurrence property of the stochastic factor $V$ also provides a known terminal condition to \eqref{rdtBSDE}. Indeed, by definition of the return time $\tau$, $Y_{\tau} = Y_0 $. This allows us to define the loss functions.
\item The functions $y$ and $z$ are only functions of the stochastic factor $V$ and do not depend on time, as it is the case for standard BSDEs with fixed time horizon.
\end{enumerate}

\subsection{Deep learning algorithm for the simulation of ergodic BSDE}

In this section, we present our two main algorithms for the simulation of ergodic BSDEs, called GeBSDE and LAeBSDE. The first neural network based algorithm for solving BSDEs was initially proposed in \cite{han2017deep}. In the context of ergodic BSDEs with random terminal time of type \eqref{rdtBSDE}, the initial value $Y_{0} = y_{0}$ is known and will, thus, not be learned. We instead approximate the ergodic cost $\lambda$ as a trainable parameter of the model. We present two algorithms:
\begin{itemize}[-]
    \item A global solver GeBSDE which consists in the minimization of a square loss function at the random horizon $\tau$ and which is the adaptation of the Deep BSDE solver of \cite{han2017deep} in the case of ergodic BSDEs.
    \item A locally additive solver LAeBSDE optimized according to the aggregation of local loss functions up to the random horizon, inspired from the deep backward multi-step introduced in \cite{germain2021neural} and the LaBSDE solver in \cite{kapllani2020deep}.
\end{itemize}

Using the same notation as in the previous section, the forward stochastic factor $V$ is approximated by a Euler discretization on the time grid $\pi$. Denoting for all $i \geq 0$, $\Delta W_{t_{i}} = W_{t_{i+1}} - W_{t_{i}}$ the Brownian increment at time $t_{i}$, we write
\begin{eqnarray*}
\overline{V}_{t_{i+1}} &=& \overline{V_{t_{i}}} + \mu(\overline{V}_{t_{i}}) h + \kappa \Delta W_{i}, \\
\overline{V_{0}} &=& v_{0}. \nonumber
\end{eqnarray*}
We denote by $\tilde{\tau}$ the first hitting time in the time grid of $\overline{V}$ to $v_{0}$ after $T$,
\begin{eqnarray}
    \tilde{\tau} = \inf \BRA{ t_{i} > T, \, t_{i} \in \pi \, ; (\overline{V}_{T} - v_{0})(\overline{V}_{t_{i}} - v_{0}) \leq 0},
\end{eqnarray}
where we choose $T \in \pi$.

\paragraph{GeBSDE solver}
Starting from initial value $Y_{0} = y_{0}$, we consider a forward discretization of the equation on the time grid $\pi$ with constant time step $h$. 
 The process $Z_{t}=z(V_t)$ at time $t_{i}$ is represented by a neural network $\Z^{\theta}: \R\to \R^d$, function of $\overline{V_{t_{i}}}$ and with parameter $\theta.$ 
The approximation $\overline{Y}_{t_{i}}^{\theta, \bar{\lambda}}$ of $Y_{t_i}$ depends on the optimization parameter $\theta$ as well as on the trainable parameter $\bar{\lambda}$ through the forward discretization of the ergodic BSDE
\begin{eqnarray} \label{discrebsde}
\overline{Y}_{t_{i+1}}^{\theta, \bar{\lambda}} = \overline{Y}_{t_{i}}^{\theta, \bar{\lambda}} - F(\overline{V}_{t_{i}}, \Z^{\theta}(\overline{V_{t_{i}}})) h + \bar{\lambda} h + \Z^{\theta}(\overline{V_{t_{i}}}) \Delta W_{t_{i}}.
\end{eqnarray}
The output $\overline{Y}_{\bar{\tau}}^{\theta, \bar{\lambda}}$ aims to match the terminal value $Y_{\tau} = y(V_{0}) = y_{0}$, by minimizing over parameters $(\theta, \bar{\lambda})$ the expected square loss function
\begin{eqnarray}\label{mslf}
L_{g}(\theta, \bar{\lambda}) = \e \SBRA{\ABS{y_{0} - \overline{Y}_{\bar{\tau}}^{\theta, \bar{\lambda}}}^{2}}.
\end{eqnarray}
The loss function \eqref{mslf} is approximated by the empirical loss function over a batchsize $B$,
\begin{eqnarray} \label{emplossfunc}
L_{g}^{B}(\theta, \bar{\lambda}) = \displaystyle\frac{1}{B} \sum_{j=1}^{B} \ABS{y_{0} - \overline{Y}_{\tilde{\tau_{j}}}^{\theta, \bar{\lambda}, j}}^{2}.
\end{eqnarray}
Finally, we denote $M$ the number of gradient descent performed in the optimization.

\begin{algorithm}[H]
\DontPrintSemicolon
Let $\Z^{\theta}$ be a neural network defined on $ \R$, valued in $\R^{d}$, with parameters $\theta.$ Let $\bar{\lambda}^{0} \in \R$ be the initialisation of the trainable parameter representing the ergodic cost. Define $N_{T} = \lfloor \frac{T}{h} \rfloor + 1$. \\
\For{$j=1, ..., B$}{
\For{$k\in \BRA{0, ..., N_{T} + 1}$, starting from $\overline{V}_{0}^{j} = v_{0}$}{
Sample $\Delta W_{t_{k}}^{j}$ from a Gaussian vector. \\
$\overline{V}_{t_{k+1}}^{j} = \overline{V}_{t_{k}}^{j} + \mu( \overline{V}_{t_{k}}^{j}) h + \kappa^{\top} \Delta W_{t_{k}}^{j},$ \\
}
Let $N_{j} = N_{T}+1$. \\
\While{$(\overline{V}_{t_{N_{T}}}^{j} - v_{0})(\overline{V}_{t_{N_{j}}}^{j} - v_{0}) > 0$}{
Sample $\Delta W_{t_{k}}^{j}$ from a Gaussian vector. \\
$\overline{V}_{t_{N_{j}+1}}^{j} = \overline{V}_{t_{N_{j}}}^{j} + \mu( \overline{V}_{t_{k}}^{j}) h + \kappa^{\top} \Delta W_{t_{k}}^{j},$ \\
$N_{j} = N_{j}+1$
}
Set, $h N_{j} = \tilde{\tau_{j}}$. \\
}
\For{$m=0$, ..., $M$}{
\For{$j=1, ..., B$}{
\For{$k\in \BRA{0, ..., N_{j}-1}$, starting from $\overline{Y}_{0}^{j} = y_{0}$ }{
$\overline{Y}_{t_{k+1}}^{\theta^{m}, \bar{\lambda}^{m}, j} = \overline{Y}_{t_{k}}^{\theta^{m}, \bar{\lambda}^{m}, j} - h F(\overline{V}_{t_{k}}^{j}, \Z^{\theta^{m}}(\overline{V}_{t_{k}}^{j})) + \bar{\lambda}^{m} h + \Z^{\theta^{m}}(\overline{V}_{t_{k}}^{j})^{\top} \Delta W_{t_{k}},$ \\
}}
Compute $ L^{B}(\theta^{m}, \bar{\lambda}^{m}) = \frac{1}{B} \sum_{j=1}^{B} \ABS{y_{0} - \overline{Y}_{\tilde{\tau}_{j}}^{\theta^{m}, \bar{\lambda}^{m}, j}}^{2}$. \\
Update $\theta^{m+1} = \theta^{m} - \rho_{m} \nabla_{\theta} L^{B}(\theta^{m}, \bar{\lambda}^{m})$ and $\bar{\lambda}^{m+1} = \bar{\lambda}^{m} - \rho_{m} \nabla_{\bar{\lambda}} L^{B}(\theta^{m}, \bar{\lambda}^{m})$.
}
\caption{Global eBSDE Algorithm - (GeBSDE)}
\label{debsdealg}
\end{algorithm}

\paragraph{}
Using the results of \cite{chan2019machine} and \cite{han2017deep}, we develop a neural network consisting in $2$ hidden layers of $20+d$ neurons each, where $d$ is the dimension of the Brownian Motion. For the simulation, we use the $tanh$ activation function and the Adam optimizer with a learning rate $\rho_{m}$ to update both parameters $\theta^{m}$ and $\bar{\lambda}^{m}$. The learning rate parameter can be optimized depending on the example, as investigated in \cite{chan2019machine}. However, choosing a large enough initial learning rate as well as the Adam optimizer, reduce the risk the algorithm gets stuck in a local minimum. Finally, we use a Glorot normal initialization for the parameters of the neural network and the trainable parameter $\bar{\lambda}$, the latter being constrained to be in the interval $\SBRA{-K, K}$.

\paragraph{LAeBSDE solver -} Some other deep learning algorithms rely on a global optimization involving local loss function at each time step, as studied in \cite{hure2020deep} and \cite{kapllani2020deep}. Such algorithms approximate $Y$ with a neural network, while $Z$ can either be computed with automatic differentiation or with another neural network. Numerical results in \cite{hure2020deep} yield that automatic differentiation may lead to some additional errors, so that we will rather use a neural network to approximate $Z$. In the context of ergodic BSDE, the ergodic cost is again approximated as a trainable parameter of the model $\bar{\lambda}$ on which the loss function will depend.

The construction of local loss functions relies on the time discretization \eqref{discrebsde}. In fact, iterating this equation with the initial condition $Y_{0} = y_{0}$, gives for all $i \geq 1$,
\begin{eqnarray} \label{iterative_timediscr}
\overline{Y}_{t_{i}}^{\theta, \bar{\lambda}} = y_{0} - \sum_{k=0}^{i-1} F(\overline{V}_{t_{k}}, \Z^{\theta}(\overline{V_{t_{k}}})) h + \bar{\lambda} h + \Z^{\theta}(\overline{V_{t_{k}}}) \Delta W_{t_{k}}.
\end{eqnarray}
In turn, introducing two neural networks $\Y^{\theta_{1}}$ defined on $\R$ and valued in $\R$ and $\Z^{\theta_{2}}$ defined on $\R$ valued in $\R^{d}$, the local loss function at time $t_{i}$ can be defined as the expected square distance between $\Y^{\theta_{1}}(\overline{V_{t_{i}}})$ and \eqref{iterative_timediscr},
\begin{eqnarray} \label{loc_loss}
L_{\loc, t_{i}}(\theta_{1}, \theta_{2}, \bar{\lambda}) = \e \SBRA{\ABS{\Y^{\theta_{1}}(\overline{V_{t_{i}}}) + \sum_{k=0}^{i-1} F(\overline{V}_{t_{k}}, \Z^{\theta_{2}}(\overline{V_{t_{k}}})) h - \bar{\lambda} h - \Z^{\theta_{2}}(\overline{V_{t_{k}}}) \Delta W_{t_{k}} - y_{0}}^{2}}.
\end{eqnarray}
The loss function is then constructed by summing those local loss functions over $i$. In our framework, $\tau$ is not necessarily bounded and thus, this sum could have infinitely many terms. However, when approximating expectations over a batchsize $B$, one can express this empirical loss function as a sum up to time $\underset{j \in B}{\max} \, \tilde{\tau_{j}}.$ Note that since the terminal time is random, the local loss function at time $t_{i}$ \eqref{loc_loss} is computed on the set of trajectories $\T_{i} = \BRA{j \in \BRA{1, ..., B} \, ; \, \tilde{\tau_{j}} \geq t_{i}}$, for which the approximated return time is larger than $t_{i}.$ The empirical version of \eqref{loc_loss} is given by
\begin{eqnarray}
    L_{\loc, t_{i}}^{B}(\theta_{1}, \theta_{2}, \bar{\lambda}) = \frac{1}{\ABS{\T_{i}}} \sum_{j \in \T_{i}} \ABS{\Y^{\theta_{1}}(\overline{V}_{t_{i}}^{j}) + \sum_{k=0}^{i-1} F(\overline{V}_{t_{k}}^{j}, \Z^{\theta_{2}}(\overline{V}_{t_{k}}^{j})) h - \bar{\lambda} h - \Z^{\theta_{2}}(\overline{V}_{t_{k}}^{j}) \Delta W_{t_{k}}^{j} - y_{0}}^{2}.
\end{eqnarray}
Denoting $N_{\max}^{B} := \frac{1}{h}\underset{j \in B}{\max} \, \tilde{\tau_{j}}$, the index in the grid of the greatest return time over the fixed amount of samples $B$, the empirical locally additive general loss is given by
\begin{eqnarray}
    L^{B}_{\loc}(\theta_{1}, \theta_{2}, \bar{\lambda}) = \sum_{i=1}^{N_{\max}^{B}} L_{\loc, t_{i}}^{B}(\theta_{1}, \theta_{2}, \bar{\lambda}).
\end{eqnarray}

\begin{algorithm}[H]
\DontPrintSemicolon
Let $\Y^{\theta_{1}}$ be a neural network defined on $\R$ and taking values in $\R$ with parameters $\theta_{1}$, and $\Z^{\theta_{2}}$ be a neural network defined on $ \R$, taking values in $\R^{d}$, with parameters $\theta_{2}.$ Let $\bar{\lambda}^{0} \in \R$ be the initialisation of the trainable parameter representing the ergodic cost. Define $N_{T} = \lfloor \frac{T}{h} \rfloor + 1$. \\
\For{$j=1, ..., B$}{
\For{$k\in \BRA{0, ..., N_{T} + 1}$, starting from $\overline{V}_{0}^{j} = v_{0}$}{
Sample $\Delta W_{t_{k}}^{j}$ from a Gaussian vector. \\
$\overline{V}_{t_{k+1}}^{j} = \overline{V}_{t_{k}}^{j} + \mu( \overline{V}_{t_{k}}^{j}) h + \kappa^{\top} \Delta W_{t_{k}}^{j},$ \\
}
Let $N_{j} = N_{T}+1$. \\
\While{$(\overline{V}_{t_{N_{T}}}^{j} - v_{0})(\overline{V}_{t_{N_{j}}}^{j} - v_{0}) > 0$}{
Sample $\Delta W_{t_{k}}^{j}$ from a Gaussian vector. \\
$\overline{V}_{t_{N_{j}+1}}^{j} = \overline{V}_{t_{N_{j}}}^{j} + \mu( \overline{V}_{t_{k}}^{j}) h + \kappa^{\top} \Delta W_{t_{k}}^{j},$ \\
$N_{j} = N_{j}+1$
}
Set, $h N_{j} = \tilde{\tau_{j}}$.
}
\For{$m=0$, ..., $M$}{
\For{$j=1, ..., B$}{
Set, $\phi_{t_{-1}} = 0$. \\
\For{$k\in \BRA{0, ..., N_{j}-1}$, starting from $\overline{Y}_{0}^{j} = y_{0}$ }{
$ \psi_{t_{k}}^{\theta_{2}^{m}, \bar{\lambda}^{m}, j} =  h F(\overline{V}_{t_{k}}^{j}, \Z^{\theta^{m}}(\overline{V}_{t_{k}}^{j})) - \bar{\lambda}^{m} h - \Z^{\theta^{m}}(\overline{V}_{t_{k}}^{j})^{\top} \Delta W_{t_{k}},$ \\
$\phi_{t_{k}}^{\theta_{2}^{m}, \bar{\lambda}^{m}, j} = \phi_{t_{k-1}}^{\theta_{2}^{m}, \bar{\lambda}^{m}, j} + \psi_{t_{k}}^{\theta_{2}^{m}, \bar{\lambda^{m}}, j}$ \\
}}
\For{$k\in \BRA{0, ..., N_{j}-1}$}{
Define the set $\T_{k} = \BRA{j \in \BRA{1, ..., B} \, ; \, \bar{\tau_{j}}^{+} \geq t_{k}}$. \\
Compute $ L_{\loc, t_{k}}^{B}(\theta_{1}^{m}, \theta_{2}^{m}, \bar{\lambda}^{m}) = \frac{1}{\ABS{\T_{k}}} \sum_{j \in \T_{k}} \ABS{\Y^{\theta_{1}^{m}}(\overline{V}_{t_{k}}^{j}) + \phi_{t_{k}}^{\theta_{2}^{m}, \bar{\lambda}^{m}, j} - y_{0}}^{2}$. 
}
Compute $ L_{\loc}^{B}(\theta_{1}^{m}, \theta_{2}^{m}, \bar{\lambda}^{m}) = \displaystyle\sum_{k=1}^{\max_{j=1, .., B} N_{j} - 1} L_{\loc, t_{k}}^{B}(\theta_{1}^{m}, \theta_{2}^{m}, \bar{\lambda}^{m})$. \\
Denoting $\theta = (\theta_{1}, \theta_{2}, \bar{\lambda})$, update $\theta^{m+1} = \theta^{m} - \rho_{m} \nabla_{\theta} L_{\loc}^{B}(\theta^{m})$.
}
\caption{Locally additive eBSDE Algorithm - (LAeBSDE)}
\label{locebsdealg}
\end{algorithm}

\paragraph*{}
For the simulation, we consider as before neural networks with $2$ hidden layers of $20 + d$ neurons each. We check that increasing the number of layers or neurons does not improve accuracy in our numerical tests. The algorithms are implemented in Python with \textit{Tensorflow} library. Numerical experiments are conducted using Intel(R) Xeon(R) CPU @ 2.20GHz with 25GB of RAM. The code of both solvers is available on github : \url{https://github.com/gubrx/Deep-learning-eBSDE}.

\begin{rmq}
Note that this algorithm can also be used by first approximating the ergodic cost $\lambda$ with Monte Carlo methods using the results of Section \ref{section:semiexplicit}, and plugging this estimator $\hat{\lambda}$ in the forward discretization above. The optimization is then only performed on the parameters of the neural network.
\end{rmq}

\subsection{Examples}

In this section, we present the numerical results obtained with Algorithm \ref{debsdealg} and \ref{locebsdealg} for two examples of ergodic BSDEs with explicit solutions. We, also, investigate the approximation of the ergodic cost $\lambda$ with Monte Carlo methods, within the framework of Propositions \ref{linlambd} and \ref{linquadr}. For the numerical tests, we will consider a stochastic factor $V$ of type Ornstein-Uhlenbeck with dynamics
\begin{eqnarray} \label{OU:def}
dV_{t} = - \mu V_{t} + \kappa^{\top}dW_{t}, \quad V_{0} = v_{0}.
\end{eqnarray}

\begin{exe} \label{exe1}
The ergodic BSDE \eqref{ebsde} with driver $F(v, z) = C_{v} v e^{-v^{2}/2}$ admits a unique Markovian solution such that $y(0) = \frac{C_{v}}{\mu + \frac{1}{2} \kappa^{2}} \frac{\sqrt{2 \pi}}{2}$ and $z$ is bounded, given by
\begin{eqnarray} \label{ex1}
\PAR{y(v), z(v), \lambda} = \PAR{ \frac{C_{v}}{\mu + \frac{1}{2} \kappa^{2}} \int_{-\infty}^{v} e^{-\frac{y^{2}}{2}}dy, \, \frac{C_{v}}{\mu + \frac{1}{2}\kappa^{2}} e^{-\frac{v^{2}}{2}}, \, 0}.
\end{eqnarray}
\end{exe}

Consider a discretization with step $h = 0.01$, a batchsize $B=64$ and an initial learning rate $\rho_{0} = 0.0003$. For the simulation, we choose $v_{0} = 0$, $C_{v} = 1, \, \kappa = 0.8$, $\mu = 1.5$ and $T = 1.$ In Figure \ref{lossex1} and \ref{lambdaex1}, we plot the evolution of the empirical loss function $L^{B_{\epsilon}}$ given in \eqref{emplossfunc} over $B_{\epsilon} = 100B$ samples as well as the absolute error on $\bar{\lambda}$ through the number $M$ of gradient descent performed in the algorithm.

\begin{figure}[H]
\centering
\begin{minipage}[t]{0.48\textwidth}
        \includegraphics[scale=0.5]{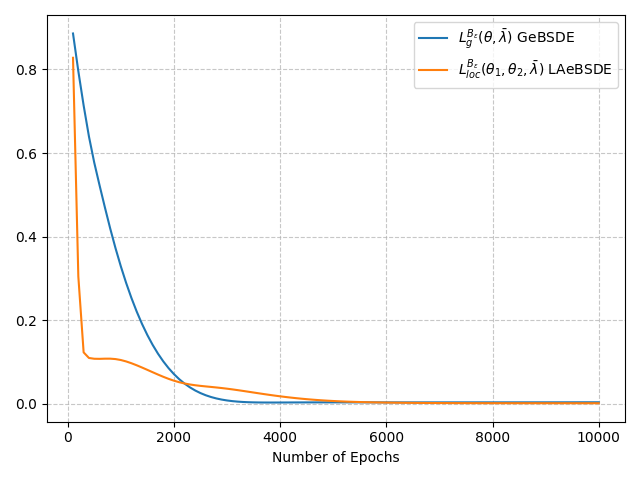}
        \caption{Empirical loss function $L^{B_{\epsilon}}$.}
    \label{lossex1}
\end{minipage} \hfill
\begin{minipage}[t]{0.48\textwidth}
        \includegraphics[scale=0.5]{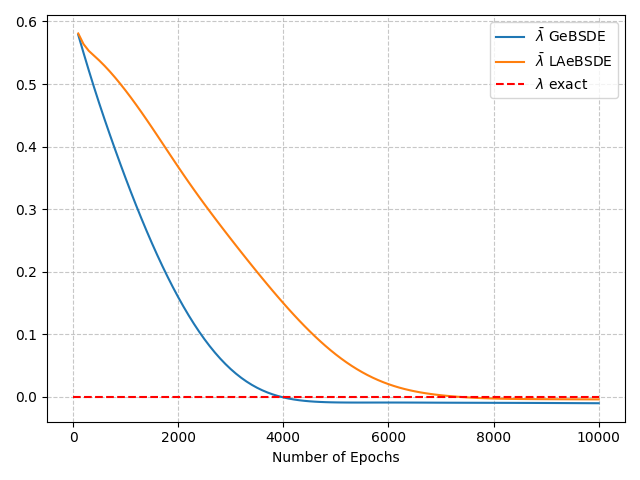}
        \caption{Convergence of $\lambda$.}
    \label{lambdaex1}
\end{minipage}
\end{figure}

Both algorithms converge in the sense that the loss functions as well as the absolute errors on $\lambda$ converge to zero as the number of training steps grows. The GeBSDE converges faster, in $4000$ gradient descent to the true value of $\lambda$ leading to an error of order $10^{-2}.$ The LAeBSDE estimation of the ergodic cost converges around $7000$ epochs with a final error of order $10^{-3}$. In Figure \ref{mean_err_ex1}, we plot the the mean relative absolute error on $Y$, defined by  
\begin{eqnarray}\label{empmeanerr}
    \epsilon_{t_{i}}(Y) = \frac{1}{B_{\epsilon}} \sum_{j=1}^{B_{\epsilon}} \ABS{\frac{y(\overline{V_{t_{i}}^{j}}) - \overline{Y_{t_{i}}^{j}}}{y(\overline{V_{t_{i}}^{j}})}},
\end{eqnarray}
at each time step that is for a sample of size $B_{\epsilon} = 100 B$ of realizations of the diffusion and $t_{i} \in \pi \cap \SBRA{0, T}$.

\begin{figure}[H]
\centering
\begin{minipage}[t]{0.6\textwidth}
    \centering
    \includegraphics[scale=0.5]{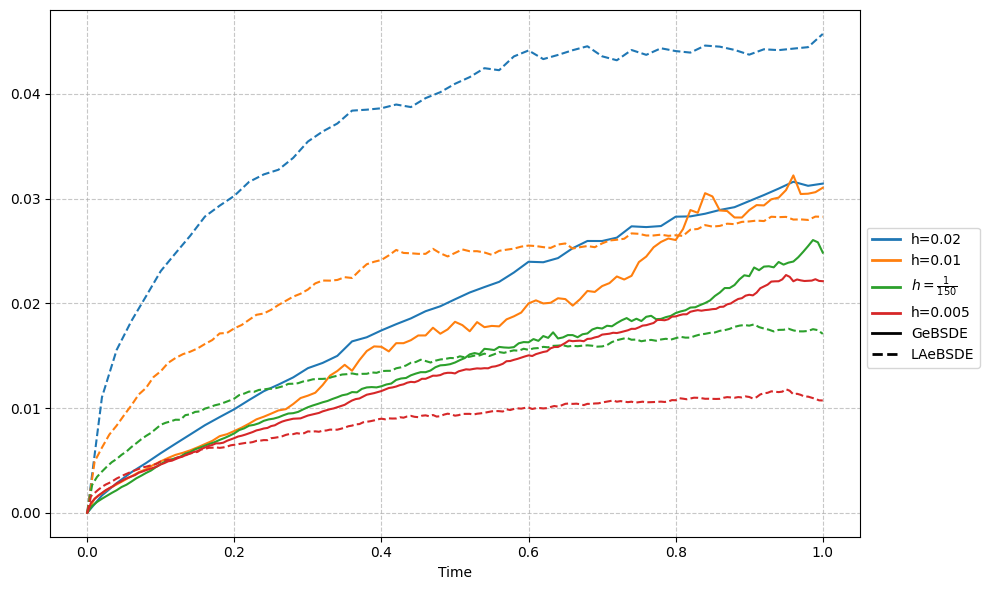}
    \caption{Mean relative error \eqref{empmeanerr} on $Y$ for different time steps.}
    \label{mean_err_ex1}
\end{minipage}
\end{figure}
Since the initial value $Y_{0} = y_{0}$ is known, we expect that the error is zero at time $0$ and increases on $\SBRA{0, T}$, and this can be seen in Figure \ref{mean_err_ex1}. The mean relative error also decreases as the time step gets smaller for both algorithms. Note that the GeBSDE leads a better mean relative error for $h=0.02$, $h=0.01$. For smaller time steps, the LAeBSDE algorithm outperforms the GeBSDE and leads a relative error of $1 \%$ at time $T = 1$ for $h=0.005$.

We also evaluate the error on $Y$ and $Z$ along the trajectories on $\SBRA{0, T}$ through the integral errors:
\begin{eqnarray} \label{def:integralerr}
I_{\epsilon}^{h}(Y) = \e \SBRA{ \sum_{i=1}^{N_{T}} h \ABS{y(\overline{V_{t_{i}}^{j}}) - \overline{Y_{t_{i}}^{j}}}} \quad \text{and} \quad
I_{\epsilon}^{h}(Z) = \e \SBRA{\sum_{i=1}^{N_{T}} h \NRM{z(\overline{V_{t_{i}}^{j}}) - \Z^{\theta}(\overline{V_{t_{i}}^{j}})}^{2}}.
\end{eqnarray}
The expectations above are computed on a sample of size $B_{\epsilon}.$ Moreover, we represent the mean and $95 \%$ confidence interval over $5$ independent training procedures. The errors are computed for the same values of time step as in Figure \ref{mean_err_ex1}.

\begin{figure}[H]
\centering
\begin{minipage}[t]{0.48\textwidth}
    \centering
    \includegraphics[scale=0.5]{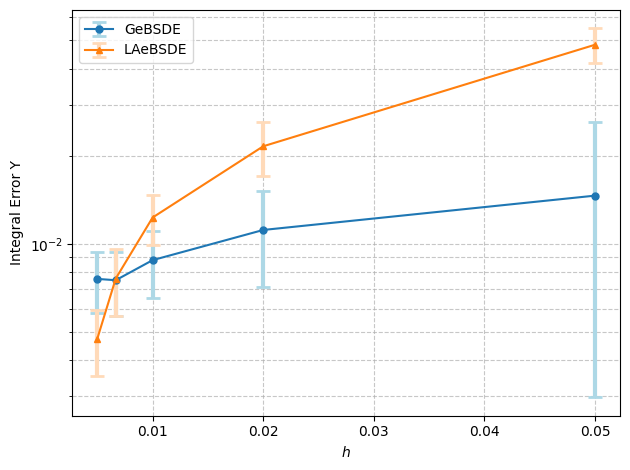}
    \caption{Integral error \eqref{def:integralerr} on $Y$ over $[0, T]$.}
    \label{int_err_Y_ex1}
\end{minipage}\hfill
\begin{minipage}[t]{0.48\textwidth}
    \centering
    \includegraphics[scale=0.5]{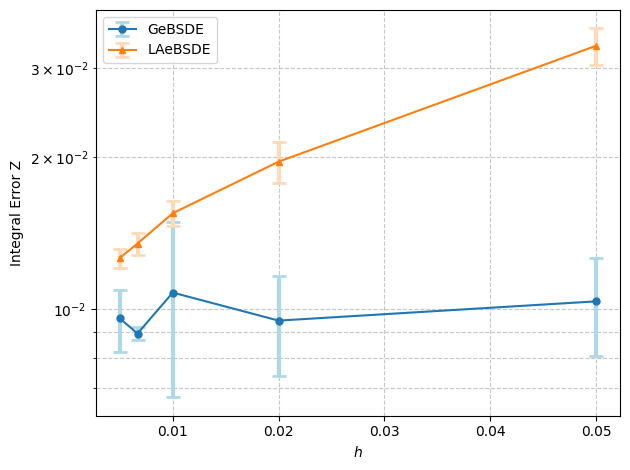}
    \caption{Integral error \eqref{def:integralerr} on $Z$ over $[0, T]$.}
    \label{inetgralerrorex1}
\end{minipage}
\end{figure}

The GeBSDE algorithm leads to a smaller integral error on $Z$ for every time steps. However, we again observe that the integral error on $Y$ for the LAeBSDE decreases rapidly with the time step and outperforms the global algorithm for $h=0.005$. Both algorithm provide a good approximation of the trajectory of the solution over the random interval $\SBRA{0, \tau}$, as displayed in Figure \ref{traj_ex1}.

\begin{figure}[H] 
\centering
\begin{minipage}{0.5\textwidth}
        \includegraphics[scale=0.6]{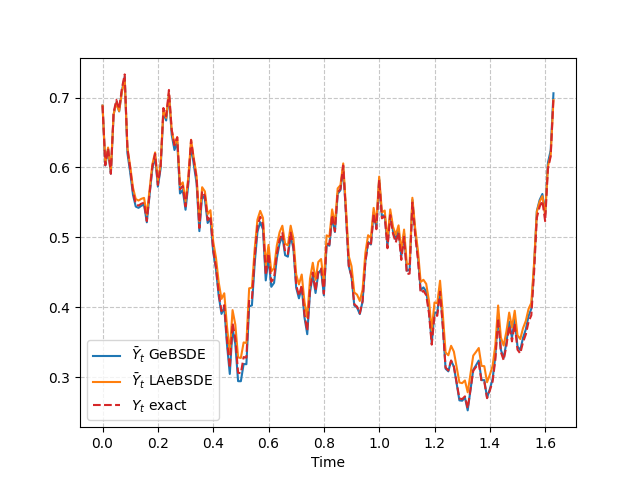}
        \caption{Example of trajectory of $Y$ over $\SBRA{0, \tau}$ with $h=0.01$.}
        \label{traj_ex1}
\end{minipage}
\centering
\end{figure}

\begin{exe}[with non-zero ergodic cost $\lambda$] \label{exe2}
We generalize the second example presented in \cite{hu2020systems}. Consider the ergodic BSDE \eqref{ebsde} with driver $F(v, z) = C_{v} \ABS{v} e^{-v^{2}/2}$. Denote $\Phi$ the cumulative distribution function of the standard normal distribution, $\Phi(x) = \frac{1}{\sqrt{2 \pi}} \int_{-\infty}^{x} e^{- \frac{y^{2}}{2}}dy$. 
\end{exe}

\begin{lem} \label{lemex2}
The eBSDE \eqref{ebsde} with generator $F(v, z) = C_{v} \ABS{v} e^{-v^{2}/2}$ admits a unique Markovian solution satisfying $y(0) = 0$ and such that $z$ is bounded, given by the following triplet $(y(.), z(.), \lambda)$
\begin{eqnarray}
y(v) &=& \ind_{\BRA{v \geq 0}} \int_{0}^{v} e^{\frac{y^{2}}{2}} \PAR{\frac{C_{v}}{\kappa^{2}} e^{-y^{2}} + 2 \frac{C_{v}}{\kappa^{2}} (\Phi(y) - 1)}dy + \ind_{\BRA{v < 0}} \int_{0}^{v} e^{\frac{y^{2}}{2}} \PAR{-\frac{C_{v}}{\kappa^{2}} e^{-y^{2}} + 2 \frac{C_{v}}{\kappa^{2}} \Phi(y)}dy  \nonumber \\
z(v) &=& \ind_{\BRA{v \geq 0}} \kappa e^{\frac{v^{2}}{2}} \PAR{\frac{C_{v}}{\kappa^{2}} e^{-v^{2}} + 2 \frac{C_{v}}{\kappa^{2}} (\Phi(v) - 1)} + \ind_{\BRA{v < 0}} \kappa e^{\frac{v^{2}}{2}} \PAR{- \frac{C_{v}}{\kappa^{2}} e^{-v^{2}} + 2 \frac{C_{v}}{\kappa^{2}} \Phi(v)} \nonumber \\
\lambda &=& \frac{C_{v}}{\sqrt{2 \pi}}. \label{lambdex2}
\end{eqnarray}
\end{lem}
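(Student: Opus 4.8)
The plan is to reduce the ergodic BSDE to a one–dimensional ODE, solve it explicitly by the scale/speed method, and then close the argument with the uniqueness result already in hand. Since the driver $F(v,z)=C_v|v|e^{-v^2/2}$ depends on $v$ only, I look for a Markovian triplet $(y(V_t),z(V_t),\lambda)$ and apply It\^o's formula to $y(V_t)$. Matching the $dW$–part forces $z(v)=y'(v)\kappa$, and matching the drift shows that $(y,z,\lambda)$ solves \eqref{ebsde} if and only if $y$ solves
\begin{equation*}
\tfrac12\NRM{\kappa}^2 y''(v)-\mu v\,y'(v)=\lambda-F(v),
\end{equation*}
that is $\Lr y=\lambda-F$, where $\Lr=\tfrac12\NRM{\kappa}^2\partial_{vv}-\mu v\,\partial_v$ is the generator of the Ornstein--Uhlenbeck factor $V$. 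I will work in the normalization for which the invariant law of $V$ is the standard Gaussian (equivalently $2\mu=\NRM{\kappa}^2$), which is exactly what makes the scale density equal to $e^{v^2/2}$ and produces the $\Phi$ and $e^{-v^2}$ terms appearing in the statement.

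To solve the ODE I rewrite $\Lr$ in divergence form $\Lr y=\tfrac{1}{m'}\bigl(y'/s'\bigr)'$ with scale density $s'(v)=e^{v^2/2}$ and speed density $m'(v)=\tfrac{2}{\NRM{\kappa}^2}e^{-v^2/2}$. One integration gives
\begin{equation*}
\frac{y'(v)}{s'(v)}=y'(0)+\int_0^v m'(u)\bigl(\lambda-F(u)\bigr)\,du .
\end{equation*}
Because $z=y'\kappa$ must be bounded while $s'(v)\to\infty$, the bracket must tend to $0$ as $v\to\pm\infty$; these two one–sided conditions determine the two free constants. Subtracting them yields $\int_{\R}m'(\lambda-F)=0$, i.e. $\lambda=\pi(F)$ is the average of $F$ against the normalized invariant (speed) measure, and the resulting Gaussian integral gives $\lambda=\tfrac{C_v}{\sqrt{2\pi}}$; adding them, the evenness of both $m'$ and $F$ forces the integration constant $y'(0)=0$, consistent with $y(0)=0$. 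This joint determination of $\lambda$ and the integration constant from the two boundedness conditions is the only genuinely delicate point.

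With $\lambda$ and $y'(0)$ fixed, the remaining work is the explicit evaluation of $\int_0^v e^{-u^2/2}\,du=\sqrt{2\pi}\bigl(\Phi(v)-\tfrac12\bigr)$ and $\int_0^v u\,e^{-u^2}\,du=\tfrac12(1-e^{-v^2})$, which after reinserting $s'(v)=e^{v^2/2}$ reproduce the stated piecewise formulas for $y'$, hence for $z=y'\kappa$, on $\{v\ge0\}$ and $\{v<0\}$, and for $y$ by one more integration from $0$. I then record the qualitative properties: $y'$ is continuous at $0$ with $y'(0)=0$ (the kink of $|v|$ only induces a jump in $y''$, which is harmless since the ODE need hold only a.e.), and $y'$ is bounded and tends to $0$ at $\pm\infty$, so $z$ is bounded and $y$ is sublinear.

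Finally I close by uniqueness. One checks that $F$ satisfies Assumptions \ref{weakdissass}--\ref{Fgrowthass}: it is bounded, Lipschitz in $v$ (since $v\mapsto|v|e^{-v^2/2}$ has bounded derivative) and independent of $z$. Proposition \ref{existebsde} and Theorem \ref{Uthmebsde} then apply and identify the constructed triplet, normalized by $y(0)=0$, as \emph{the} unique Markovian solution. An equivalent but shorter route, bypassing the derivation, is to posit the stated triplet and verify the displayed ODE by direct differentiation (using $\Phi'=\phi$) before invoking Theorem \ref{Uthmebsde}; I would keep the constructive version since it also explains the value $\lambda=C_v/\sqrt{2\pi}=\pi(F)$.
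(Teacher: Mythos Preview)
Your argument is correct and more constructive than the paper's. The paper proceeds by positing the ansatz with undetermined constants $A_1,A_2$, applying It\^o's formula to $y(V_t)$, and then matching terms: the choice $A_1=C_v/\kappa^2$ together with the normalization $\mu=\tfrac12\kappa^2$ kills the unwanted drift, and $A_2=2A_1$ is fixed afterwards by requiring continuity of $z$ at $0$ (so that $y$ is $C^2$ and $z=\kappa\nabla y$ is $C^1$). You instead derive the Poisson equation $\Lr y=\lambda-F$ and solve it via the scale/speed representation, determining both $\lambda$ and the integration constant from the two boundedness conditions at $\pm\infty$; the evenness argument that forces $y'(0)=0$ is exactly what the paper obtains from continuity at the origin, seen from the other end. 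Your route has the advantage of explaining the value $\lambda=\pi(F)=C_v/\sqrt{2\pi}$ as the invariant-measure average of $F$, which the paper's verification leaves implicit. Both proofs require the same normalization $2\mu=\NRM{\kappa}^2$ and both close with the uniqueness statement; your explicit invocation of Proposition~\ref{existebsde} and Theorem~\ref{Uthmebsde} is cleaner than the paper's reference to the Markovian property.
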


\begin{preuve}
We look for a triplet $(y(.), z(.), \lambda)$ solution of the ergodic BSDE \eqref{ebsde} with generator $F(v, z) = C_{v} \ABS{v} e^{-v^{2}/2}$, of the form
\begin{eqnarray}
y(v) &=& \ind_{\BRA{v \geq 0}} \int_{0}^{v} e^{\frac{y^{2}}{2}} \PAR{A_{1} e^{-y^{2}} + A_{2}(\Phi(y) - 1)}dy + \ind_{\BRA{v < 0}} \int_{0}^{v} e^{\frac{y^{2}}{2}} \PAR{-A_{1} e^{-y^{2}} + A_{2} \Phi(y)}dy \nonumber\\
\label{ex2soly}  \\
z(v) &=& \ind_{\BRA{v \geq 0}} \kappa e^{\frac{v^{2}}{2}} \PAR{A_{1} e^{-v^{2}} + A_{2}(\Phi(v) - 1)} + \ind_{\BRA{v < 0}} \kappa e^{\frac{v^{2}}{2}} \PAR{- A_{1} e^{-v^{2}} + A_{2} \Phi(v)} \label{ex2solz} \\
\lambda &=& \frac{\kappa^{2} A_{2}}{2 \sqrt{2 \pi}}, \label{ex2sollamb}
\end{eqnarray}
where $A_{1}, \, A_{2}$ being real parameters to be determined.\\ An application of Ito formula gives
\begin{eqnarray*}
dy(V_{t}) &&= e^{V_{t}^{2} / 2} \SBRA{A_{1}e^{-V_{t}^{2}} + A_{2} (\Phi(V_{t}) - 1)} \PAR{- \mu V_{t} dt + \kappa dW_{t}} \\
&& \quad + \frac{1}{2} \kappa^{2} \PAR{V_{t}e^{V_{t}^{2}/2} \SBRA{A_{1}e^{-V_{t}^{2}} + A_{2} (\Phi(V_{t}) - 1)} + e^{V_{t}^{2}/2} \SBRA{-2 V_{t} A_{1}e^{-V_{t}^{2}} + \frac{A_{2}}{\sqrt{2 \pi}} e^{-V_{t}^{2}/2}}}dt \\
&&= V_{t}e^{V_{t}^{2}/2} \SBRA{A_{1}e^{-V_{t}^{2}} + A_{2} (\Phi(V_{t}) - 1)} \PAR{\frac{1}{2} \kappa^{2} - \mu}dt \\
&& \quad - \kappa^{2} A_{1} V_{t} e^{-V_{t}^{2}/2}dt + \frac{A_{2} \kappa^{2}}{2 \sqrt{2 \pi}}dt + \kappa e^{V_{t}^{2}/2} \SBRA{A_{1}e^{-V_{t}^{2}} + A_{2} (\Phi(V_{t}) - 1)} dW_{t}.   
\end{eqnarray*}
Choosing $A_{1} = \frac{C_{v}}{\kappa^{2}}$ and $\mu = \frac{1}{2} \kappa^{2}$, the triplet $(y, z, \frac{C_{v}}{\sqrt{2 \pi}})$ given by \eqref{ex2soly} and \eqref{ex2solz} satisfies equation \eqref{ebsde}. Now, we need to establish the Markovian property of the solution (see Proposition $3.4$ in \cite{liang2017representation}), namely that $y$ is $C^{2}$ and that $z(v) = \kappa \nabla y(v)$, so that $z$ is $C^{1}$. For this purpose, and thanks to the continuity of $z$ in $0$ and from \eqref{ex2solz} we obtain that $A_{2} = 2 A_{1} = 2 \frac{C_{v}}{\kappa^{2}}.$ Hence, we deduce the boundedness of $z(.)$ by $Z_{\max} = \kappa \frac{C_{v}}{\mu - C_{v}}$ and we have the Markovian property of the solution.
\end{preuve}

Let $v_{0} = 0$, $T=1$, $C_{v} = 0.75$, $\mu = 1$, time step $h = 0.01$ and the same parameters of the neural network as in Example \ref{exe1}. In this setting, the ergodic cost $\lambda$ given by \eqref{lambdex2} is $0.299206$ and the trainable parameter $\overline{\lambda}$ for both algorithms converges towards this value in approximately $6000$ gradient steps. Training the model with $10000$ gradient descent, the absolute error on $\lambda$ is of order $10^{-3}$. We illustrate the convergence of the empirical loss functions as well as the convergence of the ergodic cost estimators.

\begin{figure}[H]
\centering
\begin{minipage}[t]{0.48\textwidth}
    \includegraphics[scale=0.5]{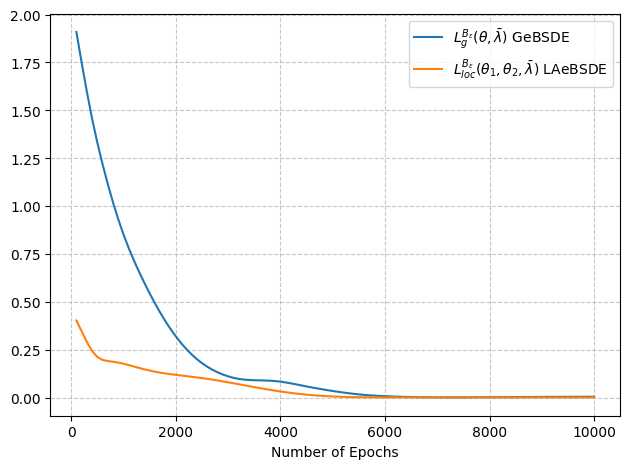}
    \caption{Empirical loss function $L^{B_{\epsilon}}$.}
\end{minipage}\hfill
\begin{minipage}[t]{0.48\textwidth}
    \includegraphics[scale=0.5]{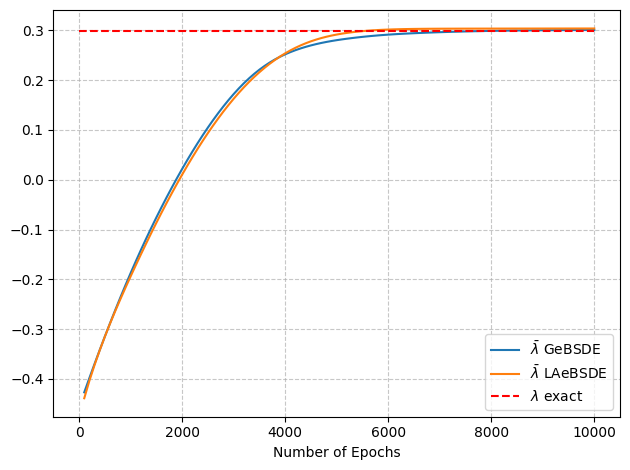}
    \caption{Convergence of $\bar{\lambda}$.}
\end{minipage}
\end{figure}

The shape of the mean absolute error on $Y$ for this example is quite different for the two algorithms. In fact, for the GeBSDE, where the solution $Y$ is constructed with a forward iterative scheme \eqref{discrebsde} relying on the trained neural network $\Z^{\theta}$, the error starts at zero and then grows almost linearly, according to Figure \ref{mean_err_ex2}. On the other hand, for the LAeBSDE, the solution $Y$ is the output of the neural network $\Y^{\theta_{1}}$, optimized according to the aggregation of local loss functions $L_{\loc, t_{i}}$ given in \eqref{loc_loss}. We observe that, for this example, the mean error is almost constant on the interval $\SBRA{0.2, T}$.

\begin{figure}[H]
\centering
\begin{minipage}[t]{0.6\textwidth}
    \centering
    \includegraphics[scale=0.5]{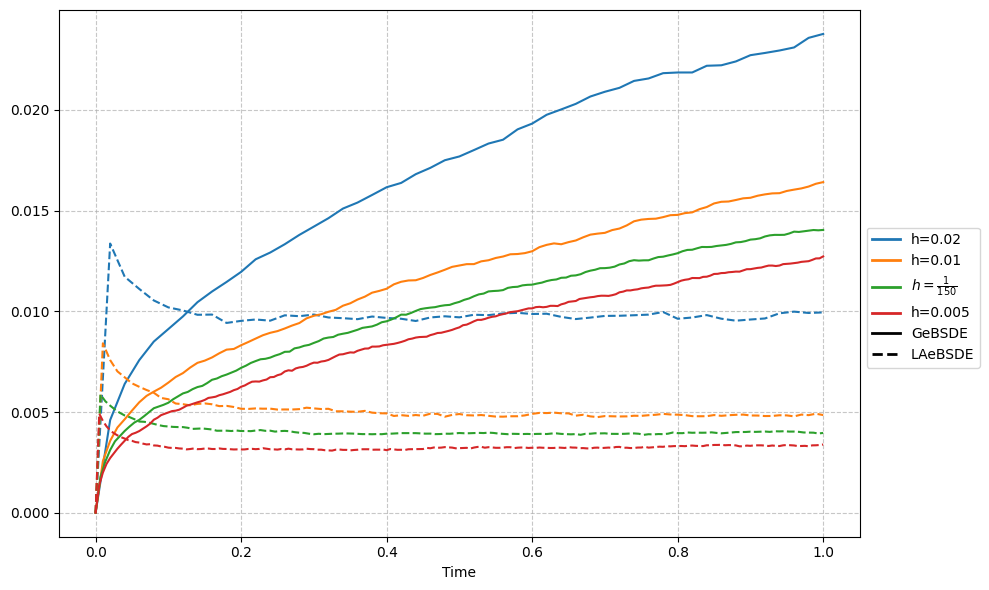}
    \caption{Mean relative error \eqref{empmeanerr} on $Y$ for different time steps.}
    \label{mean_err_ex2}
\end{minipage}
\end{figure}

As shown in Figure \ref{mean_err_ex2} and Figure \ref{int_err_Y_ex2}, the mean relative error and the integral error on $Y$ are lower and decreases faster for the LAeBSDE.

\begin{figure}[H]
\centering
\begin{minipage}[t]{0.48\textwidth}
    \centering
    \includegraphics[scale=0.5]{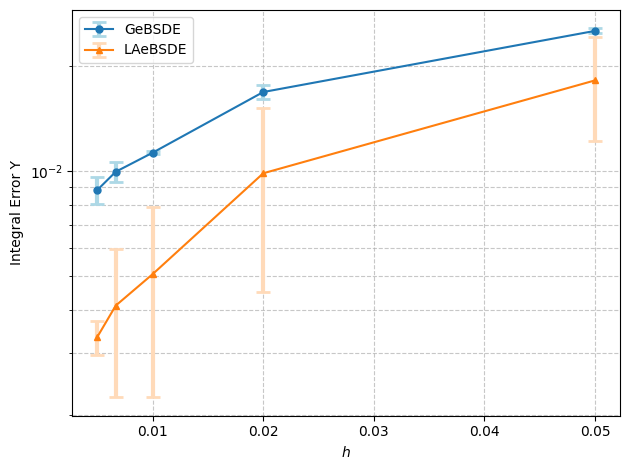}
    \caption{Integral error \eqref{def:integralerr} on $Y$ over $[0, T]$.}
    \label{int_err_Y_ex2}
\end{minipage}\hfill
\begin{minipage}[t]{0.48\textwidth}
    \centering
    \includegraphics[scale=0.5]{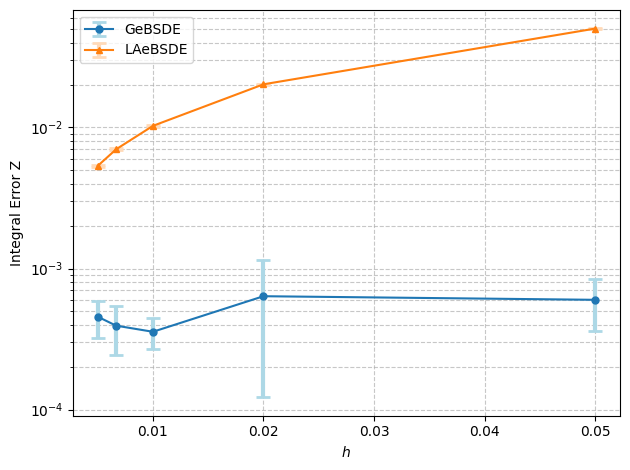}
    \caption{Integral error \eqref{def:integralerr} on $Z$ over $[0, T]$.}
    \label{integralerrorex2}
\end{minipage}
\end{figure}

\paragraph{Estimation of $\lambda$ with Monte Carlo methods -} Since for the two above examples the generator only depends on $v$, Proposition \ref{linlambd} applies and we recall the characterization of the ergodic cost $\lambda$ given in \eqref{MClambda1},
\begin{eqnarray*}
\lambda = \frac{\e \SBRA{\displaystyle\int_{0}^{\tau} F(V_{s}) ds}}{\e \SBRA{\tau}}.
\end{eqnarray*}
We use Monte Carlo methods over $M$ samples to compute both expectations in the above formula. Approximating $V$ and $\Gamma$ with an Euler scheme on the time grid $\pi$ with time step $h$ and the integral with a Riemann sum on the time grid $\pi$, we consider the estimate
\begin{eqnarray} \label{estimMC}
\hat{\lambda} = \frac{1}{\displaystyle\sum_{m=1}^{M} \bar{\tau}_{m}} \sum_{m=1}^{M} \sum_{i=0}^{\bar{\tau}_{m} -1} h F(\overline{V_{t_{i}}^{m}}).
\end{eqnarray}
We summarize the approximation results of the mean absolute error on $100$ simulations for the estimation of the ergodic cost $\hat{\lambda}$ for Examples \ref{exe1} and \ref{exe2} in Table \ref{tab:absolute_error_MCex1} and \ref{tab:absolute_error_MCex2} respectively. The simulations are performed with the following set of parameters : $T = 1$, $\mu = 2$, $\kappa = 2$, $C_{v} = 1$ and $v_{0} = 0.5$.

\begin{table}[H]
\centering
\begin{tabular}{|c|ccc|}
\hline
$h$ &M=1000 & M=10000 & M=100000 \\ \hline
0.05 & 0.009743 \, (4.87e-05) & 0.007224 \, (7.48e-06) & 0.005691 \, (1.37e-06) \\
0.02 & 0.009107 \, (4.66e-05) & 0.006597 \, (4.63e-06) & 0.004895 \, (1.69e-06) \\
0.01 & 0.008833 \, (3.73e-05) & 0.005778 \, (7.85e-06) & 0.004374 \, (9.73e-07) \\ \hline
\end{tabular}
\caption{Mean absolute error (variance) on $\hat{\lambda}$ for Example \ref{exe1}. The exact value of $\lambda$ is $0$.}
\label{tab:absolute_error_MCex1}
\end{table}

\begin{table}[H]
\centering
\begin{tabular}{|c|ccc|}
\hline
$h$ & M=1000 & M=10000 & M=100000 \\ \hline
0.05 & 0.002988 \, (2.86e-06) & 0.002955 \, (2.50e-07) & 0.002904 \, (1.99e-08) \\
0.02 & 0.002136 \, (2.30e-06) & 0.001617 \, (4.98e-07) & 0.001528 \,(1.67e-08) \\
0.01 & 0.001637 \, (1.29e-06) & 0.000780 \, (2.62e-07) & 0.000939 \, (3.55e-08) \\ \hline
\end{tabular}
\caption{Mean absolute error (variance) on $\hat{\lambda}$ for Example \ref{exe2}. The exact value of $\lambda$ given in \eqref{lambdex2} is $0.398942$.}
\label{tab:absolute_error_MCex2}
\end{table}

Finally, we display the mean and variance of lambda estimations obtained with Monte Carlo methods \eqref{estimMC} using $M=10 0000$ samples with the output of Algorithm \ref{debsdealg} and \ref{locebsdealg} for $B=64$ and $10 000$ gradient descents in Table \ref{tab:lambda_comp}. Statistics are computed over $100$ values for the Monte Carlo estimator and on $10$ independent trainings of our neural network algorithms.

\begin{table}[H]
\centering
\begin{tabular}{|c|cccc|}
\hline
& Exact & MC & GeBSDE & LAeBSDE \\ \hline
Example \ref{exe1} & 0 & -0.004374 \, (9.73e-07) & -0.003782 \, (4.28e-05) & -0.004280 \, (3.07e-05) \\
Example \ref{exe2} & 0.398942 & 0.399882  \, (3.55e-08) & 0.400130 \, (1.53e-05) & 0.397600 \, (4.63e-05) \\ \hline
\end{tabular}
\caption{Comparison of $\lambda$ approximations for parameters $v_{0} = 0.5$, $T= 1, \, h=0.01, \, \kappa= 2,  \, \mu=2, \, C_{v}=1$.}
\label{tab:lambda_comp}
\end{table}

\subsection{Power utility examples}

We now revert our attention to ergodic BSDEs associated with power forward utilities \eqref{powut}. In the absence of portfolio constraints, the generator \eqref{driver_zar} can be rewritten for $(v, z) \in \R \times \R^{d}$, as
\begin{eqnarray*}
   F^{\delta}(v, z) = \frac{\delta}{2(1 - \delta)} \NRM{\theta(v) + z}^{2} + \frac{1}{2}\NRM{z}^{2}.
\end{eqnarray*}
Corollary \ref{linquadr} gives a characterization of the ergodic cost $\lambda$ as the solution to the minimization problem \eqref{minlambda}, which we will use as a benchmark for the ergodic cost. 
First, we approximate the diffusion $V$ and the process $\Gamma$ with an Euler scheme and the expectation with Monte Carlo method. Then, we obtain the approximation of the cost $\lambda$ by using Newton's method for the minimization on $\SBRA{-K, K}$ of the map
\begin{eqnarray} \label{MCminlambd}
    \lambda \mapsto \ABS{\frac{1}{M} \sum_{m=1}^{M} \overline{\Gamma}_{0, \bar{\tau}} -1}.
\end{eqnarray}

For the simulation, we consider a truncated linear price of risk vector $\theta(v) = \varphi_{b}(\theta v)$, as performed in \cite{pham2021neural}, where $\varphi_{b}$ denotes the projection on the segment $\SBRA{-b ,b}$. We use the same stochastic factor $V$ of type Ornstein-Uhlenbeck as in \eqref{OU:def} and set the parameters $\mu = 3$, $\kappa = 1.3$, $\delta = 0.5$, $\theta=0.8$, $b=3$ and $T = 1.$ The number of Monte Carlo samples used for the estimation of the ergodic cost with \eqref{estimMC} is $M=100 000$ and we use a time step $h = 0.01.$ Finally, we observe that the bound $K$ is larger for the following than for the examples from the previous section. Thus we use a higher learning rate of $\rho_{0} = 0.0007$ to ensure a sufficient speed of convergence for $\bar{\lambda}$.

\begin{figure}[H]
\centering
\begin{minipage}[t]{0.48\textwidth}
        \includegraphics[scale=0.5]{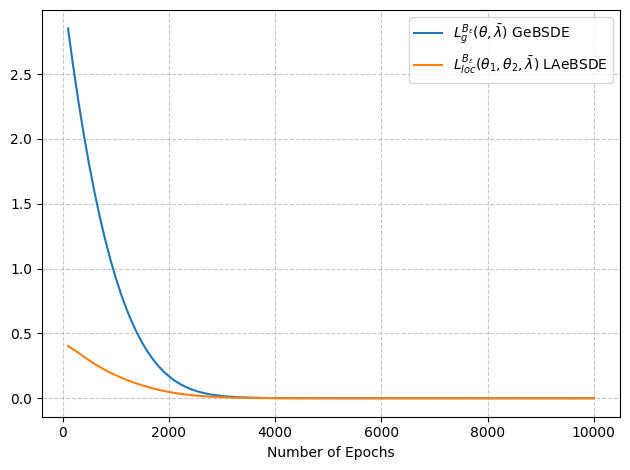}
        \caption{Empirical loss function $L^{B^{\epsilon}}$.}
\end{minipage} \hfill
\begin{minipage}[t]{0.48\textwidth}
        \includegraphics[scale=0.5]{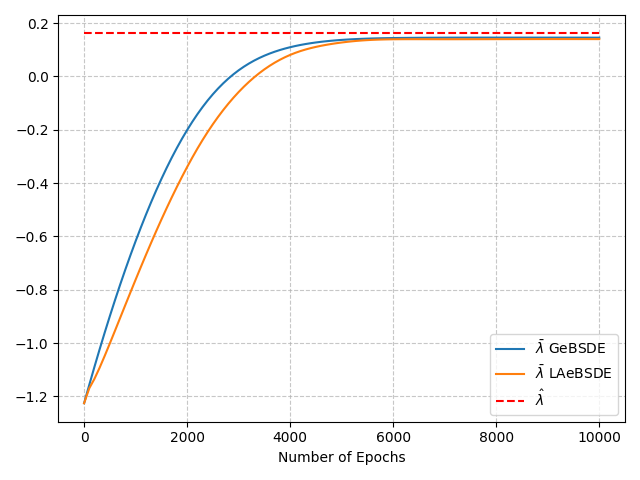}
        \caption{ Convergence of $\bar{\lambda}$.}
\end{minipage}
\end{figure}

The ergodic cost $\bar{\lambda}$ computed with Algorithm \ref{debsdealg} and \ref{locebsdealg} and the Monte Carlo approximation $\hat{\lambda}$ converge towards the same value. The absolute error between the two types of estimators is of order $10^{-2}$. Finally we illustrate the convergence of the estimator given by \eqref{MCminlambd} depending on the time step $h$ and the number of Monte Carlo samples $M$.

\begin{table}[h]
\centering
\begin{tabular}{|c|ccc|}
\hline
$h$ & M=1000 & M=10000 & M=100000 \\ \hline
0.10 & 0.192683 \, (3.48e-04) & 0.201704 \, (5.5e-05) & 0.206999 \, (3.9e-05) \\
0.05 & 0.187675 \, (8.03e-04) & 0.183987 \, (2.17e-04) & 0.182469 \, (2.6e-05) \\
0.02 & 0.159276 \, (1.55e-03) & 0.173734 \, (4.17e-04) & 0.173557 \, (1.31e-04) \\
0.01 & 0.159119 \, (1.16e-03) & 0.174493 \, (1.61e-03) & 0.169749 \, (1.8e-04) \\ \hline
\end{tabular}
\caption{Mean (variance) on $\bar{\lambda}$ on $10$ independent runs.}
\label{tab:mean_variance_results}
\end{table}

Those Monte Carlo approximations of $\lambda$ allow to use the semi-explicit representation of Section \ref{section:connection} in order to simulate the solution of the ergodic BSDE \eqref{ebsde} on $\SBRA{0, \tau}.$ However, in the general case of generator with quadratic growth for which the Cole-Hopf transform does not help to reduce to a linear BSDE, one need another approximation procedure of the ergodic cost $\lambda$.

\paragraph{A two dimensional example -} Consider a financial market consisting of one stock, whose price dynamics is given by
\begin{eqnarray*}
    dS_{t} = S_{t} \PAR{b(V_{t})dt + \sigma(V_{t})dW_{t}^{1}},
\end{eqnarray*}
with the stochastic factor satisfying
\begin{eqnarray*}
dV_{t}^{1} = \mu(V_{t})dt + \kappa_{1}dW_{t}^{2} + \kappa_{2} dW_{t}^{2}, \quad dV_{t}^{2} = 0.
\end{eqnarray*}
The admissible set of strategies is, thus, restricted to $\Pi = \R \times \BRA{0}$, so that $\pi_{t}^{2} = 0$ and the wealth equation \eqref{wealth} reduces to
\begin{eqnarray*}
    dX_{t}^{\pi} = X_{t}^{\pi} \pi_{t}^{1} \PAR{\theta(V_{t}) + dW_{t}^{1}}, \quad \text{with} \quad \theta(V_{t}) = \frac{b(V_{t})}{\sigma(V_{t})}.
\end{eqnarray*}
The generator \eqref{driver_zar} is then given by
\begin{eqnarray}
    F(V_{t}, Z_{t}) = \frac{1}{2} \frac{\delta}{1 - \delta} \ABS{Z_{t}^{1} + \theta(V_{t})}^{2} + \frac{1}{2} \NRM{Z_{t}}^{2}.
\end{eqnarray}
Denoting $\hat{\delta} = \frac{1 - \delta + \delta (\frac{\kappa^{1}}{\NRM{\kappa}})^{2}}{1 - \delta}$ and $\tilde{Y_{t}} = e^{\hat{\delta}(Y_{t} - \lambda t)}$, the authors in \cite{liang2017representation} show that the function $\tilde{y}$ must satisfy
\begin{eqnarray} \label{edptwodim}
\tilde{y}_{t}(v, t) + \frac{1}{2} (\kappa_{1}^{2} + \kappa_{2}^{2}) \tilde{y}_{vv}(v, t) + \PAR{\mu(v) + \frac{\delta \kappa^{1}}{1 - \delta} \theta(v)} \tilde{y}_{v}(v, t) + \frac{\hat{\delta} \delta}{2(1 - \delta)} \theta^{2}(v) \tilde{y}(v, t) = 0.
\end{eqnarray}
Assuming a linear market price of risk $\theta(v) = \theta v$ and an Ornstein-Uhlenbeck stochastic factor with $\mu(v) = - \mu v$ and $\mu > 0,$ following the methodology of \cite{nadtochiy2014class}, solutions to this PDE can be derived.

\begin{figure}[H]
\centering
\begin{minipage}[t]{0.48\textwidth}
        \includegraphics[scale=0.5]{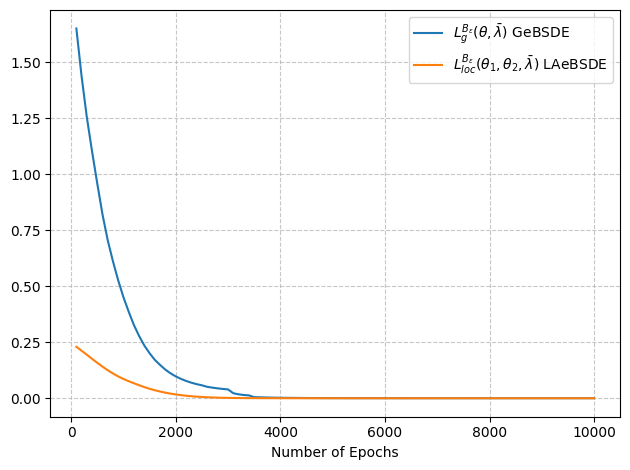}
        \caption{Empirical loss function $L^{B^{\epsilon}}$.}
\end{minipage} \hfill
\begin{minipage}[t]{0.48\textwidth}
        \includegraphics[scale=0.5]{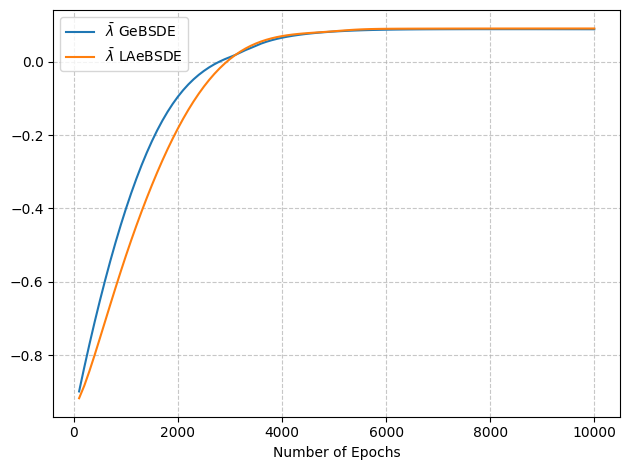}
        \caption{Convergence of $\bar{\lambda}$.}
\end{minipage}
\end{figure}

Validation loss functions for both algorithms converge to zero and the trainable parameters $\bar{\lambda}$ also converge towards the same value.

\paragraph*{}
Finally, we revert to our initial objective to simulate homothetic forward progressive utilities of Section \ref{section:forwardutilities}. Using one of the two algorithms GeBSDE or LAeBSDE to simulate the solution of eBSDE \eqref{ebsde} on $\SBRA{0, T}$, we are now able to plot the corresponding forward utilities. We display the shape of the approximated random field $U$ given by \eqref{powut} for one realization of the diffusion $V$. One can also access the rescaled optimal portfolio $\pi_{t}^{*}$ given by \eqref{pi_pow}. We plot the example of trajectory associated to the same realization of this power utility in Figure \ref{opt_port_plot}.

\begin{figure}[H]
\centering
\begin{minipage}[t]{0.48\textwidth}
        \includegraphics[scale=0.5]{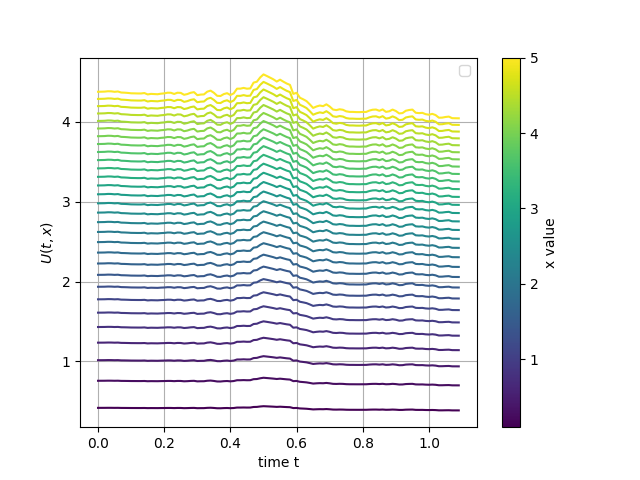}
        \caption{Dynamics of approximated utility $U(t, x)$.}
\end{minipage} \hfill
\begin{minipage}[t]{0.48\textwidth}
        \includegraphics[scale=0.5]{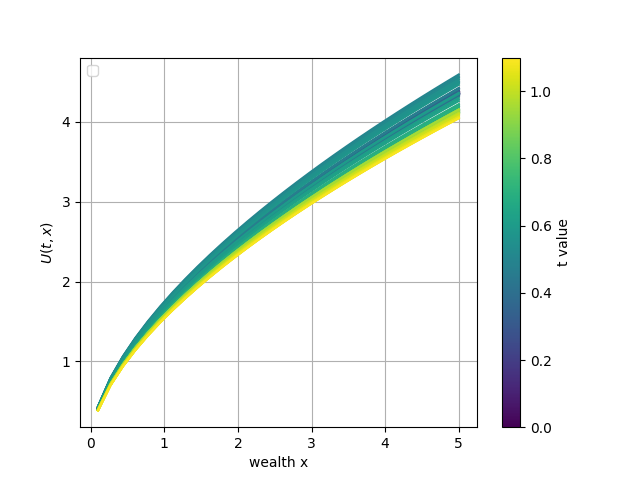}
        \caption{Monotonicity and concavity of approximated utility $U(t,x)$.}
\end{minipage}
\end{figure}

\begin{figure}[H]
\centering
\begin{minipage}[t]{0.48\textwidth}
        \centering
        \includegraphics[scale=0.5]{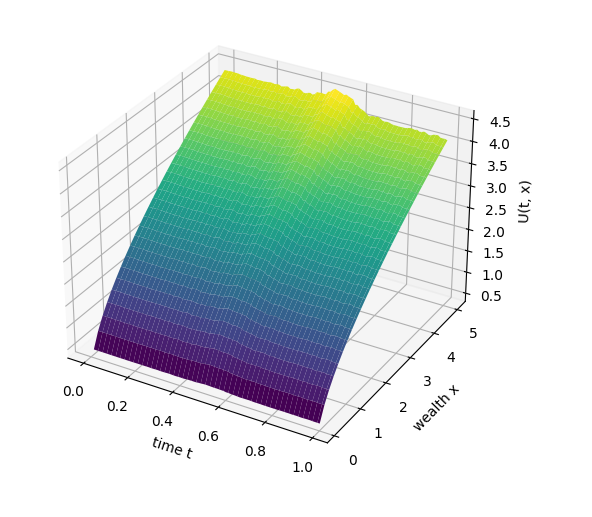}
        \caption{Random field $U(t, x)$.}
\end{minipage} \hfill
\begin{minipage}[t]{0.48\textwidth}
        \centering
        \includegraphics[scale=0.5]{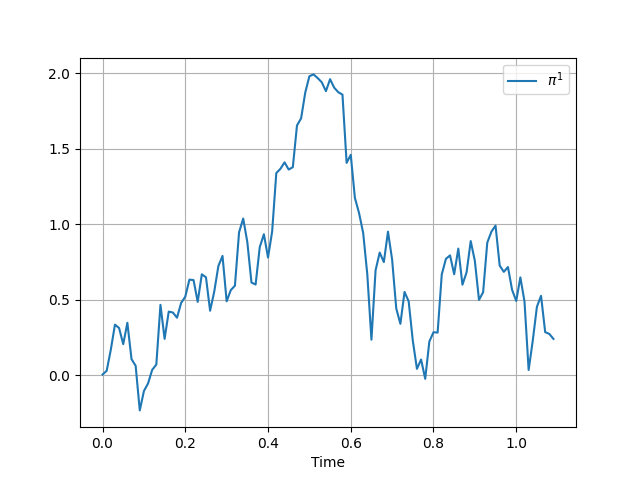}
        \caption{Rescaled optimal strategy $\pi_{t}^{1}$.}
        \label{opt_port_plot}
\end{minipage}
\end{figure}

\paragraph*{Acknowledgements} The authors thank Z. Bensaid (LMM - Le Mans University) for helpful discussions on deep learning methods for the simulation of BSDEs. The authors thank the anonymous referees and the associate editor for their helpful comments.

\newpage

\appendix

\newpage
\bibliographystyle{plain}
\bibliography{Biblio}

\end{document}